\documentclass{aims}

\usepackage{amsmath}
\usepackage{amssymb,amsmath,amsthm}
\usepackage{xcolor}

\usepackage[margin={3cm,2cm}]{geometry}
  \usepackage{paralist}
  \usepackage{graphics} 
  \usepackage{epsfig} 
\usepackage{graphicx}
 \usepackage[colorlinks=true]{hyperref}
\hypersetup{urlcolor=blue, citecolor=red}

  \textheight=8.2 true in
   \textwidth=5.0 true in
    \topmargin 30pt
     \setcounter{page}{1}




\newtheorem{theorem}{Theorem}[section]

\theoremstyle{definition}

\newcommand{\bit}{\bibitem}

\newcommand{\um}{u^m}
\newcommand{\umm}{u^{m+1}}
\newcommand{\ummd}{u^{m+1/2}}
\newcommand{\vm}{v^m}
\newcommand{\vmm}{v^{m+1}}
\newcommand{\vmmd}{v^{m+1/2}}
\newcommand{\wm}{w^m}
\newcommand{\wmm}{w^{m+1}}
\newcommand{\wmmd}{w^{m+1/2}}

\newcommand{\grad}{\nabla}
\renewcommand{\div}{\nabla\cdot}
\newcommand{\nn}{\mathbf{n}}

\title[Chemotactically induced search and defense strategies] 
      {Chemotactically induced search and defense strategies in a tritrophic system} 

\author[Nestor Anaya, Manuel Falconi and Guilmer Gonz\'alez]{}
\subjclass{35K57, 35Q92, 92D25.} 
 \keywords{Competing species, Intraguild predation model, Chemotaxis, Active-search hunting, }

 \email{nestoranaya@hotmail.com}
 \email{falconi@unam.mx}
 \email{guilmerg@ciencias.unam.mx}

\thanks{The first author is supported by NSF grant xx-xxxx}

\thanks{$^*$Corresponding author: Nestor Anaya}

\begin{document}
\maketitle

\centerline{\scshape Nestor Anaya$^*$}
\medskip
{\footnotesize
 \centerline{Departamento de Matem\'aticas}
   \centerline{Facultad de Ciencias}
   \centerline{Ciudad Universitaria, CDMX 04510 M\'exico}
} 

\medskip

\centerline{\scshape Manuel Falconi and Guilmer Gonz\'alez}
\medskip
{\footnotesize
 \centerline{Departamento de Matem\'aticas}
   \centerline{Facultad de Ciencias}
   \centerline{Ciudad Universitaria, CDMX 04510 M\'exico}
}

\bigskip

 \centerline{(Communicated by the associate editor name)}

\begin{abstract}
In this paper we study the question of the survival of a predator which in a static scenario  vanishes. we analyze the role of migration on the coexistence of three species interacting through a intraguild relationship.
\end{abstract}

\section{Introduction}
Individual movement regulated by concentrations of chemical substances is a very frequent natural phenomenon; known as {\it Chemotaxis} is an important mechanism, for instance, of bacterial populations in search of nutrients or to establish symbiotic relationships (see \cite{jbr}). Chemical components has been observed as a defense strategy of several species. To get some insight about this process, in \cite{per} were studied the chemical defense of  two species of brown alga Dictyota menstrualis and Dictyota mertensii used against  the limited mobility herbivores, the amphipod Parhyale hawaiensis and the crab Pachygrapsus transversus.  In fact, natural defense against predation is very well documented and it is present in both invertebrate and vertebrate species,  see  \cite{dum},  \cite{eis}, \cite{fat}, \cite{mat},\cite{bru} . On the other hand, the study about the relationship of organism dispersal and community structure of interacting species has a long history. Since the works of Kolmogorov \cite{kol} and Skellam \cite{ske}, mathematical modeling of diffusion and random walk has been widely applied in the study of the effect of  individual movement on the dynamic properties of different kinds of species interaction. Among the recent works on this topic it is \cite{yan} where the authors consider a tritrophic food chain with predators and one resource; the existence and boundedness of solutions and stability of equilibrium solutions are analyzed. Stability and Turing patterns of a diffusive predator-prey model have been analyzed in \cite{dan}. Diffusion and delay effect has been incorporated in an intraguild predation model in \cite{ren}, where the authors studied how the delay on the conversion rate of mesopredator induces spatiotemporal patterns. About diffusion in predator-prey context see \cite{ezi}, \cite{ai}.
In this work we analyzed how the emission of chemical substances which attract predators of consumers of a resource impact the spatial distribution of species. A laboratory  study on this topic is \cite{kes} where Kessler and Baldwin have found that volatile emissions from {\it Nicotiana attenuata} could reduce the number of herbivores up to $90\%$.

In this work, we consider an intraguild predation model of one resource and two predators; the importance of this interaction for population ecology has been explained by Polis and Holt in \cite{Polis1992}. We consider that meso predator feed on a resource which grows acoording to a logistic growth law and it is consumed by a top predator; functional responses of meso and top predators are of Holling II type. Predators and preys difusse in a connected bounded region $\Omega \subset \mathbb{R}^2$ of the plane. We consider two cases: in the first case, the model is
\begin{eqnarray}   \label{mod1f} \label{modelo}
\frac{\partial u}{\partial t} &=&d_{0}\Delta u+\alpha u\left( 1-\frac{u}{K}%
\right) -\frac{buv}{u+a},  \nonumber \\
\frac{\partial v}{\partial t} &=&d_{1}\Delta v+\gamma \frac{buv}{u+a}-\frac{%
cvw}{v+d}-\mu v,   \\
\frac{\partial w}{\partial t} &=&d_{2}\Delta w+\beta \frac{cvw}{v+d}-\nu
w-\nabla \cdot \left( \chi _{1}\left( v,w\right) \nabla v\right), 
\nonumber
\end{eqnarray}
the random dispersal of top predators is tempered by a certain tendency  to move up the gradient of meso predators.

In the second case, as a chemotactic defense mechanism of the prey is considered, the resource population attracts top predators which feeds on mesopredator; this kind of indirect defense against predators has been reported in \cite{alj}, see also \cite{bru}; 2) top predator  in search of food moves towards areas where the mesopredator population is increasing   The model is given by    %
\begin{eqnarray} \label{mod2f}
\frac{\partial u}{\partial t} & =&  d_0\Delta u +  \alpha u  ( 1 - \frac{u }{K })- \frac{b u v }{u + a} ,\nonumber \\
\frac{\partial v}{\partial t} & = &  d_1\Delta v + \gamma \frac{b u v  }{u + a} -  \frac{c v w}{v+d} -\mu v,  \\
\frac{\partial w}{\partial t} &= &  d_2\Delta w +\beta \frac{c v w }{v + d}  -\nu w -  \div ( \chi_2(u,w) \grad u) ,\nonumber
\end{eqnarray} 
in this model the random movement is regulated by the gradient of  population density  of the resource.
The carrying capacity $K=K(x,y)$ is non-negative function defined in $\Omega$ and describes the different suitability of the niches for the resource species. Niche suitability and size population has been addressed in \cite{lui}.
It is assumed that the flux vanishes in the boundary of $\Omega$,
\begin{equation}
\frac{\partial u}{\partial \eta }\left( x,t\right) =\frac{\partial v}{%
\partial \eta }\left( x,t\right) =\frac{\partial w}{\partial \eta }\left(
x,t\right) =0 , x\in \partial \Omega t>0
\label{con frontera}
\end{equation}
where $\partial
/\partial \eta =\eta \cdot \nabla $, and $\eta $ is  the normal vector to  $\partial \Omega $.


The carrying capacity is denoted by $K$, $\alpha$ is the intrinsic growth of the resource $u$
; $b$ y $c$ are the mortality rate by predation of $u$ and $v$, respectively.
The conversion rate of biomass captured by $v$ and $w$ are  $\gamma $ and  $
\beta $, respectively. In Model \ref{mod1f} it is assumed that the regulating mechanism against of random dispersal of $w$ depends on a volatile substance is generated by $u$; in Model \ref{mod2f} is generated by $v$.   
Two predators which  feed on a common resource subject to a Lotka-Volterra interaction was considered in \cite{wan}; diffusive movement of predators is controlled by the prey density gradient. In \cite{tel} was analyzed a predator-prey model where predator moves toward the gradient of a chemical released by prey.\\
The underlying ordinary differential system corresponding to models \ref{mod2f} and \ref{mod1f} is given by
\begin{eqnarray}   \label{mod3f} 
u' &=&\alpha u\left( 1-\frac{u}{K}
\right) -\frac{buv}{u+a},  \nonumber \\
v'&=&\gamma \frac{buv}{u+a}-\frac{
cvw}{v+d}-\mu v,   \\
w'&=&\beta \frac{cvw}{v+d}-\nu
w. 
\nonumber
\end{eqnarray}
The system (\ref{mod3f}) has the following equilibrium points

\begin{enumerate}
\item[i)] $P_{1}\left( 0,0,0\right) $
\item [ii)] $P_{2}\left( K,0,0\right) $
\item[iii)] $P_{3}\left( \frac{a\mu }{b\gamma -\mu },\frac{a\alpha \gamma \left(
b\gamma K-\mu (a + K)\right) }{K\left( b\gamma -\mu \right) ^{2}},0\right) .$
\end{enumerate}
Under appropriate conditions, this system posses two equilibrium points $P_{4}\left( u_{1},v_{1},w_{1}\right) $ and $P_{5}\left(
u_{2},v_{2},w_{2}\right) $ with positive coordinates given by

\begin{eqnarray*}
u_{1} & = &\frac{1}{2}\left( -a+K-\sqrt{\frac{c\alpha \beta \left( a+K\right)
^{2}-\left( 4bdK+\left( a+K\right) ^{2}\alpha \right) \nu }{\left( c\beta
-\nu \right) \alpha }}\right) \\
v_{1} &=&\frac{d\nu }{c\beta -\nu } \\
w_{1}&=&\frac{\left( d+v_{1}\right) \left( b\gamma u_{1}-\left(
a+u_{1}\right) v_{1}\mu \right) }{c\left( a+u_{1}\right) } \\
u_{2}&=&\frac{1}{2}\left( -a+K+\sqrt{\frac{c\alpha \beta \left( a+K\right)
^{2}-\left( 4bdK+\left( a+K\right) ^{2}\alpha \right) \nu }{\left( c\beta
-\nu \right) \alpha }}\right) \\
v_{2}&=&\frac{d\nu }{c\beta -\nu } \\
w_{2}&=&\frac{\left( d+v_{2}\right) \left( b\gamma u_{2}-\left(
a+u_{2}\right) v_{2}\mu \right) }{c\left( a+u_{2}\right) }.
\end{eqnarray*}

The local dynamics around the equilibrium points of this system is depicted in Appendix A.
 
\section{Existence of positive solution}
In this section we provide conditions for the existence of positive solutions of systems \ref{mod1f} and \ref{mod2f} for the initial conditions

\begin{equation}
t=0 \text{: } u=u_{0}\left( x\right) \text{, } v=v_{0}\left( x\right) \text{, }%
w=w_{0}\left( x\right) \text{, }x\in \Omega  \label{condiciones iniciales}
\end{equation}
 and the boundary conditions given by (\ref{con frontera}).
Let $p>n\geq 1$; then $W^{1,p}\left( \Omega ,%
\mathbb{R}
^{n}\right) $ is continuously embedded in the continuous function space $%
C\left( \Omega ;%
\mathbb{R}
^{n}\right) $. Let
\[
X:=\{y\in W^{1,p}\left( \Omega ,%
\mathbb{R}
^{3}\right) |\eta \cdot \nabla y_{|_{\partial \Omega }}=0\}.
\]%

\begin{theorem} \label{Teo1}
If $\left( u_{0},v_{0},w_{0}\right) \in X $, then

\begin{itemize}
\item There exists $T=T_{\max }\in \lbrack 0,\infty )$, which depends on the 
initial conditions (\ref{condiciones iniciales}) such that the problem (\ref{mod1f}),(\ref{con frontera})-(\ref{condiciones iniciales}) has a unique maximal solution $\left( u,\text{ }v,\text{ 
}w\right) $ on $\Omega \times \lbrack 0,T_{\max })$ and  $\left(
u\left( \cdot ,t\right) ,\text{ }v\left( \cdot ,t\right) ,\text{ }w\left(
\cdot ,t\right) \right) \in C\left( \left( 0,\text{ }T_{\max }\right)
,\Omega \right) $, $\left( u,v,w\right) \in C^{2,1}\left( \left( 0,T_{\max
}\right) \times \overline{\Omega },%
\mathbb{R}
^{3}\right) ;$

\item If $u_{0},$ $v_{0},$ $w_{0}\geq 0$ on $\overline{\Omega }$,
then $u,$ $v,$ $w\geq 0$ on $\Omega \times \lbrack 0,T_{\max })$;

\item If $\left\Vert \left( u,w,w\right) \left( \cdot ,t\right) \right\Vert
_{L^{\infty }\left( \Omega \right) }$ is bounded for all $t\in
\lbrack 0,T_{\max })$, then $T_{\max }=+\infty $; equivalently,  $\left(
u,v,w\right) $ is a global solution.
\end{itemize}
\end{theorem}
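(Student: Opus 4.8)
The plan is to recast \eqref{mod1f}--\eqref{condiciones iniciales} as an abstract quasilinear parabolic problem and to apply the theory of Amann for such systems. Write $y=(u,v,w)$. Then \eqref{mod1f}--\eqref{con frontera} is
\[
\partial_t y=\div\big(\mathcal A(y)\grad y\big)+\mathcal F(y)\ \text{ in }\Omega\times(0,\infty),\qquad \eta\cdot\mathcal A(y)\grad y=0\ \text{ on }\partial\Omega,\qquad y(\cdot,0)=(u_0,v_0,w_0),
\]
where $\mathcal F(y)=\big(\alpha u(1-\tfrac{u}{K})-\tfrac{buv}{u+a},\ \gamma\tfrac{buv}{u+a}-\tfrac{cvw}{v+d}-\mu v,\ \beta\tfrac{cvw}{v+d}-\nu w\big)$ and
\[
\mathcal A(y)=\begin{pmatrix} d_0 & 0 & 0\\ 0 & d_1 & 0\\ 0 & -\chi_1(v,w) & d_2\end{pmatrix};
\]
indeed the third equation reads $\partial_t w=\div\big(d_2\grad w-\chi_1(v,w)\grad v\big)+\mathcal F_3(y)$, and the conormal condition $\eta\cdot\mathcal A(y)\grad y=0$ reduces, after using $\partial_\eta v=0$, to \eqref{con frontera}. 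Since $\mathcal A(y)$ is lower triangular with positive diagonal $d_0,d_1,d_2$, its spectrum is $\{d_0,d_1,d_2\}\subset(0,\infty)$, so the system is normally parabolic; moreover $\mathcal A$ and $\mathcal F$ are smooth in $y$ on a neighbourhood of the closed positive octant (using $a,d>0$, $\chi_1$ smooth, and $K$ smooth and bounded away from $0$). As $p>n=2$ one has $W^{1,p}(\Omega;\mathbb{R}^{3})\hookrightarrow C(\overline{\Omega};\mathbb{R}^{3})$, so $X$ is an admissible initial space. Amann's local existence theorem then yields, for each $(u_0,v_0,w_0)\in X$, a unique maximal solution on $\Omega\times[0,T_{\max})$ with the stated parabolic smoothing, together with the blow-up alternative: either $T_{\max}=\infty$, or $\limsup_{t\uparrow T_{\max}}\|(u,v,w)(\cdot,t)\|_{W^{1,p}(\Omega)}=\infty$. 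This gives the first bullet and reduces the third to upgrading the blow-up norm from $W^{1,p}$ to $L^\infty$.

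For the second bullet I would use the parabolic comparison principle together with the quasi-positivity of the kinetics: $\mathcal F_j$ vanishes on $\{y_j=0\}$, so $\mathcal F_1=u\,\psi_1(u,v,w)$ and $\mathcal F_2=v\,\psi_2(u,v,w)$ with $\psi_1,\psi_2$ bounded on $\overline{\Omega}\times[0,\tau]$ for every $\tau<T_{\max}$. The $u$-equation is then linear and homogeneous in $u$ with the Neumann condition, so $u_0\ge0$ forces $u\ge0$; then the same argument for the $v$-equation gives $v\ge0$. For $w$ one invokes the natural structural assumption that the taxis flux vanishes on $\{w=0\}$, i.e.\ $\chi_1(v,w)=w\,g_1(v,w)$ with $g_1$ smooth; then $\div(\chi_1(v,w)\grad v)=g_1(v,w)\,\grad v\cdot\grad w+w\,\div(g_1(v,w)\grad v)$, so on any $[\tau,T]\subset(0,T_{\max})$ the $w$-equation is the scalar linear equation $\partial_t w-d_2\Delta w+g_1(v,w)\,\grad v\cdot\grad w+c(x,t)\,w=0$ with bounded continuous coefficients, which has the zero subsolution; the comparison principle, together with a routine approximation to cover the initial layer where $v$ is merely $W^{1,p}$, yields $w\ge0$.

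To prove the third bullet, assume $\|(u,v,w)(\cdot,t)\|_{L^{\infty}(\Omega)}\le M$ on $[0,T_{\max})$ and suppose, for contradiction, that $T_{\max}<\infty$. Fix $\tau_0\in(0,T_{\max})$; by the interior regularity already proved, $(u,v,w)(\cdot,\tau_0)$ is smooth. The $u$- and $v$-equations are semilinear heat equations whose right-hand sides are bounded by $C(M)$, so parabolic maximal $L^q$-regularity (or smoothing of the Neumann heat semigroup) gives, for every $q<\infty$, bounds on $u$ and $v$ in $W^{2,1}_q\big(\Omega\times(\tau_0,T)\big)$ uniform in $T<T_{\max}$ --- here $T_{\max}<\infty$ keeps the constants bounded --- and hence, by the parabolic Sobolev embedding, $\grad v\in L^{\infty}\big(\Omega\times(\tau_1,T_{\max})\big)$ for some $\tau_1\in(\tau_0,T_{\max})$. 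Expanding the taxis term, $\div(\chi_1(v,w)\grad v)=\chi_1(v,w)\Delta v+\partial_v\chi_1\,|\grad v|^2+\partial_w\chi_1\,\grad w\cdot\grad v$, turns the $w$-equation into the \emph{linear} parabolic equation $\partial_t w-d_2\Delta w+\partial_w\chi_1(v,w)\,\grad v\cdot\grad w=S$ with bounded drift $\partial_w\chi_1(v,w)\grad v$ and source $S=-\chi_1(v,w)\Delta v-\partial_v\chi_1|\grad v|^2+\beta\tfrac{cvw}{v+d}-\nu w$, which lies in $L^q\big(\Omega\times(\tau_1,T_{\max})\big)$ for every $q<\infty$ (the only non-$L^\infty$ term, $\chi_1\Delta v$, belongs to every $L^q$ by the previous step). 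Linear parabolic $L^q$-theory then gives a uniform bound for $w$ in $W^{2,1}_q$, hence $\grad w\in L^{\infty}$, on $(\tau_2,T_{\max})$ for some $\tau_2\in(\tau_1,T_{\max})$. Thus $\|(u,v,w)(\cdot,t)\|_{W^{1,p}(\Omega)}$ remains bounded as $t\uparrow T_{\max}$, contradicting the blow-up alternative; therefore $T_{\max}=+\infty$.

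I expect the genuine difficulty to lie in this last step, precisely because the taxis term couples $\grad v$ (and $\Delta v$) into the $w$-equation: the estimates must be done in the right order --- first close the $W^{2,1}_q$ bound for the uncoupled $v$-equation, then treat the $w$-equation as linear with $v$ already controlled --- and the constants must not degenerate as $t\uparrow T_{\max}$, which is why one starts the bootstrap at a positive time $\tau_0$ and uses $T_{\max}<\infty$. By contrast, checking the hypotheses of Amann's theorem (normal parabolicity, smoothness of $\mathcal A$ and $\mathcal F$, admissibility of the conormal boundary operator) and the comparison arguments for positivity are routine; and it is the positivity bullet that guarantees $\mathcal A$ and $\mathcal F$ are only ever needed on the set where they are smooth. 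The same scheme proves the analogous statement for \eqref{mod2f}, with $\chi_1(v,w)\grad v$ replaced by $\chi_2(u,w)\grad u$ and the roles of the first two equations unchanged.
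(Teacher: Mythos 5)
Your proposal is correct and follows essentially the same route as the paper: the paper's entire proof consists of rewriting (\ref{mod1f}), (\ref{con frontera})--(\ref{condiciones iniciales}) as $z_t=\nabla\cdot(A(z)\nabla z)+F(z)$ with exactly your lower-triangular matrix $A$ and then citing \cite{Haskell} (whose results rest on Amann's quasilinear theory), so your verification of normal parabolicity, the positivity argument, and the bootstrap upgrading the blow-up criterion from $W^{1,p}$ to $L^{\infty}$ simply spell out what the paper delegates to that reference. The one substantive caveat is the structural hypothesis $\chi_1(v,0)=0$ you need for nonnegativity of $w$: the paper neither states it nor satisfies it with its later choice $\chi_1(v,w)=e_1w-e_2v$, so this is a gap in the paper's citation-level proof rather than in your argument.
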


\begin{proof}
Let $z=\left( u,v,w\right) ^{3}.$ Then, (\ref{modelo}),( \ref{con frontera}) and (\ref{condiciones iniciales}) can be written as 
\begin{eqnarray}
z_{t} &=&\nabla \cdot \left( A\left( z\right) \nabla z\right) +F\left(
z\right) \text{ on }\Omega \times \lbrack 0,\infty )
\nonumber \\
B_{z} &=&\frac{\partial }{\partial \nu }z=0\text{ on }\partial \Omega
\times \lbrack 0,\infty )   \label{modelo reescrito} \\
z\left( \cdot ,0\right) &=&\left( u_{0},v_{0},w_{0}\right) \text{ en }\Omega 
,  \nonumber
\end{eqnarray}%
where
\[
A\left[ z\right] =\left[ 
\begin{array}{ccc}
d_{0} & 0 & 0 \\ 
0 & d_{1} & 0 \\ 
0 & -\chi _{1} & d_{2}%
\end{array}%
\right] 
\]%
and
\[
F\left( z\right) =\left[ 
\begin{array}{c}
u\left( \alpha \left( 1-\frac{u}{K}\right) -\frac{bv}{u+a}\right) \\ 
v\left( \gamma \frac{bu}{u+a}-\frac{cw}{v+d}-\mu \right) \\ 
w\left( \beta \frac{cv}{v+d}-\nu \right)%
\end{array}%
\right] 
\]

The result follows from  \cite{Haskell}.
\end{proof}

According to the above theorem, to prove the existence of global solutions it is necessary to show that $u$, $v$ and $w$
are uniformly bounded in $L^{\infty }\left( \Omega \right) $.

\begin{theorem}
If $\left( u_{0},v_{0},w_{0}\right) \in X $, then the solutions of the system (\ref{modelo}), (\ref{con frontera}) and (\ref{condiciones iniciales}) are bounded.
\end{theorem}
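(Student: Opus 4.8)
The plan is to verify the blow-up dichotomy in the last assertion of Theorem~\ref{Teo1}: it suffices to show that $\|u(\cdot,t)\|_{L^\infty(\Omega)}$, $\|v(\cdot,t)\|_{L^\infty(\Omega)}$ and $\|w(\cdot,t)\|_{L^\infty(\Omega)}$ do not blow up as $t\uparrow T_{\max}$, which then forces $T_{\max}=\infty$ and globality; moreover the same estimates will be $t$-independent once a natural sign condition on the parameters holds. I would proceed through a chain of a priori bounds of increasing strength, $u\to v,w\in L^1\to v\in L^\infty\to\nabla v\in L^q\to w\in L^\infty$, using throughout that $\Omega\subset\mathbb{R}^2$. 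First, for the resource: since $-\frac{buv}{u+a}\le 0$ and $\alpha u(1-u/K)<0$ for $u>\sup_\Omega K$, the first equation together with the parabolic maximum principle (comparison with the spatially homogeneous logistic ODE) gives $0\le u\le M_u:=\max\{\|u_0\|_{L^\infty(\Omega)},\sup_\Omega K\}$ for all $t$, while positivity of $v,w$ is already part of Theorem~\ref{Teo1}.

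Next I would extract the $L^1$ control of the predators. Setting $\phi:=\beta\gamma u+\beta v+w\ge 0$ and adding the three equations, the predation gains and losses telescope ($-\beta\gamma\frac{buv}{u+a}+\beta\gamma\frac{buv}{u+a}=0$ and $-\beta\frac{cvw}{v+d}+\beta\frac{cvw}{v+d}=0$), and after integrating over $\Omega$ every Laplacian and the chemotactic divergence vanish because of the Neumann condition~(\ref{con frontera}), leaving
\[
\frac{d}{dt}\int_\Omega\phi=\beta\gamma\alpha\int_\Omega u\Big(1-\tfrac{u}{K}\Big)-\beta\mu\int_\Omega v-\nu\int_\Omega w\le\beta\gamma\alpha M_u|\Omega|-\delta\int_\Omega(\beta v+w),
\]
with $\delta:=\min\{\mu,\nu\}$. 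Since $\int_\Omega\beta\gamma u\le\beta\gamma M_u|\Omega|$ by the first step, this has the form $\frac{d}{dt}\int_\Omega\phi\le C_0-\delta\int_\Omega\phi$, so $\int_\Omega(v+w)(\cdot,t)$ is bounded uniformly in $t$.

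I would then bootstrap $v$. Because $0\le\frac{u}{u+a}\le 1$ and $-\frac{cvw}{v+d}\le 0$, the second equation satisfies $v_t-d_1\Delta v\le(\gamma b-\mu)v$, so comparison with $\bar v'=(\gamma b-\mu)\bar v$ gives $\|v(\cdot,t)\|_{L^\infty(\Omega)}\le\|v_0\|_{L^\infty(\Omega)}e^{(\gamma b-\mu)_{+}t}$, hence $v$ is bounded on every finite interval, and \emph{uniformly} in $t$ when $\gamma b\le\mu$. (Alternatively, writing $v_t-d_1\Delta v+\mu v=g$ with $\|g(\cdot,t)\|_{L^1}\le\gamma b\|v\|_{L^1}+c\|w\|_{L^1}\le C$ by the previous step and using the smoothing of the Neumann heat semigroup $e^{t(d_1\Delta-\mu)}$, one bootstraps $\|v(\cdot,t)\|_{L^p(\Omega)}$ uniformly for every $p<\infty$, which in $n=2$ already reaches $L^\infty$.) Inserting a finite $L^\infty$ bound for $v$ back into its equation makes the right-hand side bounded on $\Omega\times[0,T]$, so parabolic $L^p$ regularity — or the variation-of-constants formula with the Neumann heat semigroup — yields $\|\nabla v(\cdot,t)\|_{L^q(\Omega)}\le C(T)$ for every $q<\infty$.

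Finally, and this is where I expect the main obstacle, I would control $w$. Testing the third equation with $w^{p-1}$, $p\ge 2$, and integrating by parts,
\[
\frac1p\frac{d}{dt}\int_\Omega w^p+d_2(p-1)\int_\Omega w^{p-2}|\nabla w|^2=(p-1)\int_\Omega\chi_1(v,w)w^{p-2}\nabla v\cdot\nabla w+\int_\Omega w^{p-1}\Big(\beta\frac{cvw}{v+d}-\nu w\Big).
\]
Assuming $\chi_1$ bounded, Young's inequality estimates the cross term by $\frac{d_2(p-1)}{2}\int_\Omega w^{p-2}|\nabla w|^2+C_p\int_\Omega w^{p-2}|\nabla v|^2$, and the last integral is handled by H\"older with the $L^q$ bound on $\nabla v$ from the previous step against an $L^r$ norm of $w^{p-2}$, the latter interpolated between $L^1$ (second step) and $\|\nabla w^{p/2}\|_{L^2}$ via Gagliardo--Nirenberg; in $n=2$ the exponents can be arranged so that the surviving term is reabsorbed into $d_2(p-1)\int_\Omega w^{p-2}|\nabla w|^2=\frac{4d_2(p-1)}{p^2}\int_\Omega|\nabla w^{p/2}|^2$ up to a lower-order remainder. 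This gives $\frac{d}{dt}\int_\Omega w^p\le C_1(T)-C_2\int_\Omega w^p$, hence $\|w(\cdot,t)\|_{L^p(\Omega)}\le C(p,T)$ for all $p<\infty$, and a Moser--Alikakos iteration (or one further semigroup estimate, now that $\nabla v\in L^q$ for every $q$) upgrades this to $\|w(\cdot,t)\|_{L^\infty(\Omega)}\le C(T)$. With all three norms finite and non-exploding on $[0,T_{\max})$, Theorem~\ref{Teo1} gives $T_{\max}=\infty$, and when $\gamma b\le\mu$ the bounds are independent of $t$, so the solution is globally bounded. The delicacy lies entirely in this last step: the flux $-\nabla\cdot(\chi_1(v,w)\nabla v)$ ties $w$ to $\nabla v$, and closing the $L^p$ estimate relies precisely on the gradient regularity obtained for $v$ and on the planar setting $n=2$ — if $\chi_1$ were allowed to grow in $w$, this step could fail and an additional structural hypothesis would be needed.
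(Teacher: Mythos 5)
Your second step is essentially the paper's entire proof. The authors set $W=u+\frac{1}{\gamma}v+\frac{1}{\gamma\beta}w$, integrate over $\Omega$, use the Neumann condition (\ref{con frontera}) to eliminate the Laplacians and the chemotactic flux, cancel the two predation terms, control the logistic term by completing the square (so they never need your maximum-principle bound $u\le\max\{\|u_0\|_{L^\infty},\sup_\Omega K\}$), and close the differential inequality $\frac{d}{dt}\int_\Omega W\,dx+\mu_0\int_\Omega W\,dx\le K_0$ with $\mu_0=\min\{\mu,\nu\}$. Their conclusion is therefore only a uniform bound on $\int_\Omega\bigl(u+\frac{1}{\gamma}v+\frac{1}{\gamma\beta}w\bigr)dx$, i.e. an $L^1$ bound; your first two steps reproduce (and slightly sharpen) everything that is actually proved there, and they are correct.

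The rest of your program — which is what one would genuinely need in order to invoke the $L^\infty$ blow-up criterion of Theorem \ref{Teo1} — has concrete gaps. After obtaining $v\in L^\infty$ you claim the right-hand side of the $v$-equation is bounded and deduce $\|\nabla v(\cdot,t)\|_{L^q}\le C(T)$ for every $q<\infty$; but that right-hand side contains $-\frac{cvw}{v+d}$, which is only bounded by $c\,w$, and at that point $w$ is controlled merely in $L^1(\Omega)$. In two dimensions an $L^1$ forcing yields, through the Neumann heat semigroup, gradient bounds only for $q<2$, so the exponent arithmetic in your $w^{p-1}$ test is not available as stated: the $L^p$ estimate for $w$ and the gradient regularity of $v$ must be run as a coupled bootstrap, and this is exactly the step you leave at ``the exponents can be arranged''. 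In addition, you assume $\chi_1$ bounded, whereas the sensitivity actually used in the paper is $\chi_1(v,w)=e_1w-e_2v$, which grows linearly in $w$ — the very situation you flag as potentially fatal — so an extra structural hypothesis would be needed to cover the model as simulated. Finally, your uniform-in-time bound for $v$ (hence the ``globally bounded'' conclusion) requires $\gamma b\le\mu$, the opposite of the standing assumption $\gamma b>\mu$ adopted in the paper; on that parameter range your argument gives bounds on finite time intervals (hence global existence), not boundedness uniformly in time. None of this is a defect the paper avoids — its own proof stops at the $L^1$ estimate — but as a proof of $L^\infty$ boundedness your sketch is incomplete at the $v$-gradient/$w$-moment stage.
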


\begin{proof}
Let $W\left( x,t\right) =u+\frac{1}{\gamma }v+\frac{1}{\gamma \beta }w$, so
\begin{eqnarray*}
\frac{d}{dt}\int_{\Omega }\left( W\left( x,t\right) \right) &=&\int_{\Omega
}\left( d_{0}\Delta u+\alpha u\left( 1-\frac{u}{K}\right) -\frac{buv}{u+a}%
\right) dx+\int_{\Omega }\left( \frac{1}{\gamma }\left( d_{1}\Delta v+\gamma 
\frac{buv}{u+a}-\frac{cvw}{v+d}-\mu v\right) \right) dx \\
&&+\int_{\Omega }\left( \frac{1}{\gamma \beta }\left( d_{2}\Delta w+\beta 
\frac{cvw}{v+d}-vw-\nabla \cdot \left( \chi_1 \left( v,w\right) \nabla
v\right) \right) \right) dx \\
&=&\int_{\Omega }\left( d_{0}\Delta u+\frac{1}{\gamma }d_{1}\Delta v+\frac{1%
}{\gamma \beta }d_{2}\Delta w\right) dx \\
&&+\int_{\Omega }\left( \alpha u\left( 1-\frac{u}{K}\right) -\frac{buv}{u+a}+%
\frac{buv}{u+a}-\frac{c}{\gamma }\frac{vw}{v+d}-\frac{\mu }{\gamma }v+\frac{c%
}{\gamma }\frac{cvw}{v+d}-\frac{\nu }{\gamma \beta }w\right) dx \\
&\leq &\int_{\Omega }\left( \alpha u\left( 1-\frac{u}{K}\right) -\frac{\mu }{%
\gamma }v-\frac{\nu }{\gamma \beta }\right) dx
\end{eqnarray*}%
It follows that
\begin{equation}
\frac{d}{dt}\int_{\Omega }Wdx+\int_{\Omega }\left( \frac{\mu }{\gamma }v+%
\frac{\nu }{\gamma \beta }w\right) dx\leq \int_{\Omega }\alpha u\left( 1-%
\frac{u}{K}\right) dx. \label{des1}
\end{equation}

On the other hand,  let $\mu _{0}=\min \{\mu ,\nu \}$ that implies
\begin{equation}
\frac{d}{dt}\int_{\Omega }Wdx+\mu _{0}\int_{\Omega }\left( \frac{1}{\gamma }%
v+\frac{1}{\gamma \beta }w\right) dx\leq \frac{d}{dt}\int_{\Omega
}Wdx+\int_{\Omega }\left( \frac{\mu }{\gamma }v+\frac{\nu }{\gamma \beta }%
w\right) dx.  \label{des2}
\end{equation}%
From (\ref{des1}) and (\ref{des2}), we obtain that 
\begin{equation}
\frac{d}{dt}\int_{\Omega }Wdx+\mu _{0}\int_{\Omega }\left( u+\frac{1}{\gamma 
}v+\frac{1}{\gamma \beta }w\right) dx\leq \int_{\Omega }\left( \alpha
u\left( 1-\frac{u}{K}\right) +\mu _{0}u\right) dx  \label{des3}
\end{equation}

Note that 
\begin{eqnarray}
\int_{\Omega }\left( \left( \alpha +\mu _{0}\right) u-\frac{\alpha u^{2}}{K}%
\right) \; dx &\leq &\int_{\Omega }\frac{1}{4}\frac{K\left( \alpha +\mu
_{0}\right) ^{2}}{\alpha } \; dx\label{des4} \\
&=&\frac{1}{4}\frac{K\left( \alpha +\mu _{0}\right) ^{2}}{\alpha }|\Omega | 
\nonumber
\end{eqnarray}%
Now, let $K_{0}=\frac{1}{4}\frac{K\left( \alpha +\mu _{0}\right) ^{2}}{\alpha 
}|\Omega |$, then from (\ref{des3}) and (\ref{des4}) we have that 
\[
\frac{d}{dt}\int_{\Omega }Wdx+\mu _{0}\int_{\Omega }\left( u+\frac{1}{\gamma 
}v+\frac{1}{\gamma \beta }w\right) dx\leq K_{0} 
\]%
from this, is clearly evident that
\[
\int_{\Omega }\left( u+\frac{1}{\gamma }v+\frac{1}{\gamma \beta }w\right)
dx\leq K_{0}+ce^{-t} 
\]
It follows that solutions are bounded, since $u, v, w$ are nonegative.
\end{proof}

The proof of the following theorem is similar to those of Theorem \ref{Teo1}.
\begin{theorem}
Let $\left( u_{0},v_{0},w_{0}\right) \in X$.

\begin{itemize} \label{Teo3}
\item There exists $T=T_{\max }\in \lbrack 0,\infty )$,  which depends on the 
initial conditions (\ref{condiciones iniciales}) such that the problem (\ref{mod2f}),(\ref{con frontera}) and (\ref{condiciones iniciales}) has a unique maximal solution $\left( u,\text{ }v,\text{ 
}w\right) $ on $\Omega \times \lbrack 0,T_{\max })$ and  $\left(
u\left( \cdot ,t\right) ,\text{ }v\left( \cdot ,t\right) ,\text{ }w\left(
\cdot ,t\right) \right) \in C\left( \left( 0,\text{ }T_{\max }\right)
,\Omega \right) $, $\left( u,v,w\right) \in C^{2,1}\left( \left( 0,T_{\max
}\right) \times \overline{\Omega },%
\mathbb{R}
^{3}\right) ;$ 

\item If $u_{0},$ $v_{0},$ $w_{0}\geq 0$ on $\overline{\Omega }$,
then $u,$ $v,$ $w\geq 0$ on $\Omega \times \lbrack 0,T_{\max })$;

\item If $\left\Vert \left( u,w,w\right) \left( \cdot ,t\right) \right\Vert
_{L^{\infty }\left( \Omega \right) }$ is bounded for all  $t\in
\lbrack 0,T_{\max })$, then $T_{\max }=+\infty $; i.e., $\left(
u,v,w\right) $ is a globally bounded solution.
\end{itemize}
\end{theorem}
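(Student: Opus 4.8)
The plan is to repeat the proof of Theorem~\ref{Teo1} essentially verbatim, since Model~\ref{mod2f} differs from Model~\ref{mod1f} only in which gradient drives the chemotaxis. First I would set $z=(u,v,w)$ and recast \ref{mod2f}, \ref{con frontera}, \ref{condiciones iniciales} in the quasilinear divergence form
\[
z_{t}=\nabla\cdot\!\left(A[z]\nabla z\right)+F(z)\ \text{ on }\Omega\times[0,\infty),\qquad \frac{\partial z}{\partial\nu}=0\ \text{ on }\partial\Omega,\qquad z(\cdot,0)=(u_{0},v_{0},w_{0}),
\]
with $F$ the same reaction vector as in the proof of Theorem~\ref{Teo1} (the third equations of \ref{mod2f} and \ref{mod1f} have identical zeroth-order parts) and
\[
A[z]=\left[\begin{array}{ccc} d_{0} & 0 & 0\\ 0 & d_{1} & 0\\ -\chi_{2} & 0 & d_{2}\end{array}\right].
\]
The only change relative to \ref{modelo reescrito} is that $\chi_{2}$ multiplies $\nabla u$ rather than $\nabla v$, so the off-diagonal term moves from the $(3,2)$ slot to the $(3,1)$ slot; $A[z]$ stays lower triangular with strictly positive diagonal $d_{0},d_{1},d_{2}$.

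Next I would verify the hypotheses needed to invoke \cite{Haskell}: the spectrum of $A[z]$ equals $\{d_{0},d_{1},d_{2}\}\subset(0,\infty)$, so the system is normally elliptic, and $F$ is smooth in $z$ on $\{u+a>0,\ v+d>0\}$ (using that $K$ is bounded away from $0$ on $\overline{\Omega}$); the abstract theory then delivers $T_{\max}\in(0,\infty]$, a unique maximal classical solution on $\Omega\times[0,T_{\max})$ with the stated $C^{2,1}$ regularity, and the blow-up alternative ($T_{\max}=+\infty$ unless $\limsup_{t\uparrow T_{\max}}\|(u,v,w)(\cdot,t)\|_{L^{\infty}(\Omega)}=\infty$), which is exactly the first and third bullets. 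For the second bullet I would use that $F$ is quasi-positive ($F_{1}$, $F_{2}$, $F_{3}$ vanish on the faces $\{u=0\}$, $\{v=0\}$, $\{w=0\}$ of the positive octant) together with the triangular form of $A[z]$: applying the scalar parabolic comparison principle successively to $u$ (whose diffusion decouples), then to $v$, then to $w$ --- after writing the $w$-equation as $w_{t}=d_{2}\Delta w-\chi_{2}(u,w)\Delta u-\nabla\chi_{2}\!\cdot\!\nabla u+(\beta cv/(v+d)-\nu)w$ with $u$, $\nabla u$, $v$ already controlled --- gives $u,v,w\ge 0$ on $\Omega\times[0,T_{\max})$.

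I expect the only genuine point (as opposed to bookkeeping) to be the verification that the chemotactic coupling is covered by \cite{Haskell}: one needs $\chi_{2}$ to be regular enough in its arguments, say $C^{1}$, so that $z\mapsto A[z]$ is smooth and the quasilinear operator lies in the admissible class, and one must ensure the cascade $u\to v\to w$ used in the positivity step is not circular. Both follow at once from the lower-triangular structure of $A[z]$, precisely as in Theorem~\ref{Teo1}; this is what makes the proof ``similar'' to the one given there.
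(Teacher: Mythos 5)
Your proposal is correct and matches the paper's intent: the paper gives no separate argument, remarking only that the proof is ``similar to'' Theorem~\ref{Teo1}, i.e.\ rewrite system (\ref{mod2f}) in the quasilinear divergence form $z_{t}=\nabla\cdot(A[z]\nabla z)+F(z)$ with the chemotactic entry $-\chi_{2}$ now in the $(3,1)$ slot and the same $F$, and then invoke \cite{Haskell}. Your added details (normal ellipticity from the triangular $A[z]$, the blow-up alternative, and the positivity cascade) only flesh out what the paper leaves implicit.
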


Note that $v$ and $w$ vanish if $\gamma b\leq \mu $
and $\beta c\leq \nu $, respectively. From now on, we assume that $\gamma b>\mu $ $\ $ and $\beta c>\nu $.

Let $Y=\left\{ U=\left( u,v,w\right) \in \left[ C^{1}\left( \overline{\Omega 
}\right) \right] ^{3}|\partial _{\nu }u(x)=0,\mbox{ x}\in \partial \Omega
\right\} $, and let $\left\{ \phi _{i,j}\mbox{, }j=1,2,...,\dim \left( E\left( \mu
_{i}\right) \right) \right\} $ be a orthonormal basis of $E\left( \mu
_{i}\right) $ and $Y_{ij}=\left\{ C\cdot \phi _{ij}|C\in 
\mathbb{R}
^{3}\right\} $. Then,  $Y_{i}=\oplus _{j=1}^{\dim \left( E\left( \mu
_{i}\right) \right) }Y_{ij},$ $Y=\oplus _{i=1}^{\infty }Y_{i}.$

\begin{theorem}
If $bK\gamma -a\mu -K\mu <0$ then the equilibrium point $P_{2}$ $\ $ of system \ref{mod1f} is locally stable.
\end{theorem}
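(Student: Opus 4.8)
The plan is to establish local asymptotic stability (hence stability) of $P_2=(K,0,0)$ through the principle of linearised stability, analysing the linearisation of (\ref{mod1f}) one Fourier mode at a time over the Neumann eigenspaces of the Laplacian introduced just above the statement. First I would linearise: writing $(u,v,w)=(K,0,0)+(\phi,\psi,\zeta)$ and discarding quadratic terms, the reaction field $F$ contributes the Jacobian
\[
J:=DF(P_2)=\begin{pmatrix} -\alpha & -\dfrac{bK}{K+a} & 0\\[1mm] 0 & \dfrac{\gamma bK}{K+a}-\mu & 0\\[1mm] 0 & 0 & -\nu\end{pmatrix},
\]
while the quasilinear transport term $-\nabla\cdot(\chi_1(v,w)\nabla v)$ of the $w$-equation linearises to $-\chi_1(0,0)\,\Delta\psi$, because the corrections coming from the $(v,w)$-dependence of $\chi_1$ near $(0,0)$ are products of two first-order quantities and hence quadratic. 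Thus the linearised system is $z_t=\nabla\cdot(A[P_2]\nabla z)+Jz$ under homogeneous Neumann conditions, with $A[P_2]=\left(\begin{smallmatrix} d_0 & 0 & 0\\ 0 & d_1 & 0\\ 0 & -\chi_1(0,0) & d_2\end{smallmatrix}\right)$.

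Next I would use the decomposition $Y=\bigoplus_{i\ge1}Y_i$, where $Y_i$ consists of the eigenfunctions of $-\Delta$ with homogeneous Neumann data for the eigenvalue $\mu_i$, $0=\mu_1<\mu_2\le\cdots$. Each $Y_i$ is invariant under the linearised operator, and on $Y_i$ that operator is the $3\times3$ matrix
\[
M_i=-\mu_i\,A[P_2]+J=\begin{pmatrix} -d_0\mu_i-\alpha & -\dfrac{bK}{K+a} & 0\\[1mm] 0 & -d_1\mu_i+\dfrac{\gamma bK}{K+a}-\mu & 0\\[1mm] 0 & \chi_1(0,0)\,\mu_i & -d_2\mu_i-\nu\end{pmatrix}.
\]
The middle row is autonomous and merely forces the other two components, so $M_i$ is triangular and its eigenvalues are $-d_0\mu_i-\alpha$, $-d_1\mu_i+\tfrac{\gamma bK}{K+a}-\mu$, and $-d_2\mu_i-\nu$. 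As $\alpha,\nu>0$, the first and third are negative for every $i$; the second is at most $\tfrac{\gamma bK}{K+a}-\mu$, and the hypothesis $bK\gamma-a\mu-K\mu<0$ is precisely $\tfrac{\gamma bK}{K+a}-\mu<0$. Hence every eigenvalue of every $M_i$ lies at or below $-\delta$, where $-\delta:=\max\{-\alpha,\ \tfrac{\gamma bK}{K+a}-\mu,\ -\nu\}<0$ does not depend on $i$.

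Finally, since the spectrum of the linearised operator on $Y$ equals $\bigcup_i\sigma(M_i)\subset\{\operatorname{Re}\lambda\le-\delta\}$, the associated semigroup decays exponentially, and the linearised-stability theorem for quasilinear parabolic problems in the functional framework of \cite{Haskell} — the same one underlying Theorem \ref{Teo1} — gives local asymptotic stability of $P_2$. I expect the only delicate point to be the linearisation of the chemotactic term: one has to check that it yields exactly $-\chi_1(0,0)\Delta\psi$ with no additional first-order contribution, so that the $M_i$ stay triangular and the taxis coefficient $\chi_1(0,0)$ disappears from the stability condition; everything else reduces to the eigenvalue computation above. (As a sanity check, $bK\gamma-a\mu-K\mu<0$ is exactly the condition under which the boundary coexistence state $P_3$ leaves the nonnegative octant, so one indeed expects $P_2$ to become stable there.)
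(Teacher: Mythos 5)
Your proposal is correct and follows essentially the same route as the paper: linearize at $P_2$, decompose over the Neumann eigenspaces $Y_i$, observe that $-\mu_i A+J$ is triangular so the taxis coefficient drops out of the spectrum, and bound all eigenvalues uniformly by a negative constant before invoking a linearized-stability theorem (the paper cites Henry, Th.~5.1.1, rather than the framework of \cite{Haskell}, but the principle is the same). Your eigenvalue bookkeeping with $d_0,d_1,d_2$ is in fact cleaner than the paper's, which contains minor typographical slips in the diffusion coefficients.
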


\begin{proof}
Let $A\left[ z\right] =\left( 
\begin{array}{ccc}
d_{0} & 0 & 0 \\ 
0 & d_{1} & 0 \\ 
0 & -\chi _{1} & d_{2}%
\end{array}%
\right) $ as in theorem \ref{Teo1} and $L=A\left[ z\right] \Delta
+J_{1}$ where $J_{1}$ is the Jacobian matrix of the system without diffusion evaluated at
 $P_{2}$; i.e. 
\[
J_{1}=\left( 
\begin{array}{ccc}
-\alpha  & -\frac{bK}{a+K} & 0 \\ 
0 & \frac{bK\gamma }{a+K}-\mu  & 0 \\ 
0 & 0 & -\nu 
\end{array}%
\right) .
\]%
The linearization of the system at $P_{2}$ is $U_{t}=LU$ . $Y_{i}$ is invariant with respect to operator $L$ for all $i\geq 1$;  $\lambda $ is an eigenvalue of $L$
restricted to $Y_{i}$ if and only if is an eigenvalue of matrix $-\mu _{i}A%
\left[ z\right] \Delta +J_{1}$.

The characteristic polynomial of  $\mu _{i}A\left[ z\right] \Delta +J_{1}$
is
\[
\varphi _{i}\left( \lambda \right) =\left( \lambda +\mu _{i}d_{1}+\alpha
\right) \left( \lambda +\mu _{i}d_{2}-\frac{bK\gamma }{a+K}+\mu \right)
\left( \lambda +\mu _{i}d_{2}+\nu \right) 
\]%
whose roots are $\varphi _{i}\left( \lambda \right) $
 $-\mu _{i}d_{1}-\alpha $, $-\mu _{i}d_{2}+\frac{bK\gamma }{a+K}-\mu $ and 
$-\mu _{i}d_{2}-\nu $. Therefore, the point-spectrum of $L$ consists of eigenvalues that satisfy
 $\{\mbox{Re}\, {\lambda} \leq -\left( 1/2\right) \max \{\alpha
,-\frac{bK\gamma }{a+K}+\mu ,\nu \}\}$ whenever $bK\gamma -a\mu -K\mu <0$; from which stability around  
 $P_{2}$ follows, [\cite{Henry},Th. 5.1.1 ].
\end{proof}
The following section describes the spatial discretization that we apply to perform some numerical simulations of the previous models.


\section{Spatial discretization }

\subsection{Variational formulation }
We consider a general reaction-diffusion problem with Neumann boundary conditions

\begin{eqnarray}
-\Delta u + \mu u & = & f \qquad \mbox{en }\Omega \label{fv1} \\
u(x,0) & = & u_0(x) \qquad \mbox{en }\Omega  \label{fv2}\\
\partial_nu(x,t) & = & 0 \qquad \mbox{en }\partial\Omega  \label{fv3}
\end{eqnarray}
where function $f \in C^0(\Omega)$ is regular, $\mu \in \mathbb{R}$.  As it is usual  $\partial_n u =\nabla u \cdot \nn$,  where $\nn$ is the exterior normal vector to $\partial \Omega$. 

A classic solution of the above problem  (\ref{fv1})--(\ref{fv3}) is a function $u:\bar{\Omega}\mapsto \mathbb{R}$, $u\in C^2(\bar{\Omega})$ which satisfies (\ref{fv1})--(\ref{fv3}). In order to facilitate the search of $u$ we reformulate the problem to find a equivalent solution.

Let $v\in X:= C^2(\bar{\Omega})$.  Multiplying  (\ref{fv1}) by $v$ it is obtained

$$
-v\Delta u + \mu u v = f v
$$
Integrating on  $\Omega$

\begin{equation}
-\int_\Omega v\Delta u \; d\Omega + \mu \int_\Omega uv\; d\Omega  = \int_\Omega f v\; d\Omega 
\end{equation}

Applying the Green Theorem
\begin{equation}
\int_\Omega \nabla u\cdot \nabla v \; d\Omega - \int_{\partial\Omega} (\nabla u\cdot \nn) v \; dS + \mu \int_\Omega uv\; d\Omega  = \int_\Omega f v\; d\Omega.
\end{equation}
Since $\partial_n u =0$ for $x\in \partial\Omega$, we have

\begin{equation}
\int_\Omega \nabla u\cdot \nabla v \; d\Omega + \mu \int_\Omega uv\; d\Omega =  \int_\Omega f v\; d\Omega. \label{variacional}
\end{equation}

This expression is known as variational formulation of the problem (\ref{fv1})--(\ref{fv3}),  see \cite{Thomee2006}.  Notice that in (\ref{variacional}) it is only required that $u,v\in C^1(\bar{\Omega})$.  Furthermore, they can even be just continuous. 

\subsection{Discretization Finite Element Method  }
Let $H^k(\Omega)$ a Sobolev space and  $C^1(0,T,X)$ is  the space of continuously differentiable functions from  $[0, T]$ on $X$.  $\Omega_h$ is a polygonal approximation of $\Omega$.  We consider a mesh $T_h$ of $\Omega_h$ consisting of convex elements  $E_i\in T_h$,  $i\in I$ , $I\subset \mathbb{N}$.

Let $\{\varphi_j(x,y)\}_{1\leq j\leq N}$ be a base of $V_h$
\begin{eqnarray*}
u_h(x,y,t) & = & \sum_{j=1}^N u_i(t) \varphi_j(x,y) \\
v_h(x,y,t) & = &  \sum_{j=1}^N v_i(t) \varphi_j(x,y) \\
w_h(x,y,t) & = &   \sum_{j=1}^N w_j(t) \varphi_j(x,y) 
\end{eqnarray*}
$x, y\in \Omega$, $0\leq t\leq T$.  The basis $ \varphi_j(x,y)$ are compact support functions and we use the usual  linear elements $P1$ defined on triangles.

Parameter $h$ represents the size of element  $E_i$ of mesh $T_h$ and  is defined as
$$
h = \max_{E_i \in T_h} \mbox{diam}(E_i),
$$
as $h\mapsto 0$, space $V_h$ is closer to  $H^k(\Omega)$.

\section{Semi-discretization of time}

Let 
$$0=t_0<t_1< \cdots t_N = T,$$
a partition of the interval $[0,  T]$ with constant step $dt= t_{m+1}-t_m$ for all $m\in\{0,\ldots,N-1\}$.  The derivative with respect to time is approximated using forward finite differences 

$$
u_t =  \frac{\umm - \um}{dt}, \quad
v_t  =  \frac{\vmm - \vm}{dt}, \quad
w_t  =  \frac{\wmm - \wm}{dt}
$$
where  $u^m=u(x,  t_m), v^m=v(x, t_m), w^m=w(x ,t_m)$. 

By substituting the above approximation in model  (\ref{modelo}) we obtain that 


 \begin{eqnarray}  \label{mod34wDd} 
\umm   & =&   \um + dt\cdot d_0\Delta \umm + dt\cdot \alpha \umm  ( 1 - \frac{\umm }{K(x,y) })- dt\cdot \frac{b \umm \vmm }{\umm + a} ,\nonumber \\
\vmm  & = &  \vm + dt\cdot d_1\Delta \vmm +dt\cdot  \gamma \frac{b \umm \vmm  }{\umm + a} - dt\cdot \frac{c \vmm \wmm}{\vmm+d} -\mu \vmm   \\
\wmm &= &  \wm + dt\cdot d_2\Delta \wmm + dt\cdot \beta \frac{c \vmm \wmm }{\vmm + d}  \nonumber  \\
           &   &-dt\cdot \nu \wmm -  dt\cdot\div ( \chi_2(\vmm, \wmm) \grad \vmm). \nonumber
\end{eqnarray} 
This is the Implicit Euler Method which depends on  both  $(x,y)\in \Omega$ for each element $E_i$ and the boundary conditions 

\begin{equation}
\grad \umm \cdot \nn=0,\, \grad \vmm \cdot \nn=0,\, \grad \wmm \cdot \nn=0, \quad m\ge 0.
\label{condd}
\end{equation} 

From the initial values  $u_0, v_0$,  and  $w_0$,  we compute the next iterations $(u_1, v_1, w_1), \ldots, (u_N,v_N, w_N)$.  The system (\ref{mod34wDd})  is solved by FEM,  assuming that $u_0, v_0, w_0\in C^2(\bar{\Omega})$,  see \cite{Douglas1970}.  To avoid some complications which arise from the nonlinearity involved in (\ref{mod34wDd}),  the terms corresponding to temporal variation are solved using a semi-implicit Runge-Kutta method of second order. The two steps of this computational process are depicted in the following.  First,  the right side of equations (\ref{modelo}) are rewritten as 

\begin{eqnarray}
F(u,v,w) & =&   d_0\Delta u + \alpha u  ( 1 - \frac{u }{K(x,y) })- \frac{b u v }{u + a} ,\nonumber \\
G(u,v,w) & = & d_1\Delta v + \gamma \frac{b u v  }{u + a} - \frac{c v w}{v+d} -\mu v,  \\
H(u,v,w) &= &  d_2\Delta w +\beta \frac{c v w }{v + d}  -\nu w -  \div ( \chi_2(v,w) \grad v) .\nonumber
\end{eqnarray} 
The first step of the RK--method of second order consists in an one Euler step computed at central point of each time interval. 

\begin{eqnarray}
\ummd & = & \um + \frac{dt}{2}\cdot  F(\um,\vm,\wm) \\
\vmmd & = & \vm + \frac{dt}{2}\cdot  G(\um,\vm,\wm) \\
\wmmd & = & \wm + \frac{dt}{2}\cdot  H(\um,\vm,\wm) 
\end{eqnarray}

In the second step,  computations are made at time $m+1$ like

\begin{eqnarray}
\umm & = & \um + dt\cdot F(\ummd,\vmmd,\wmmd) \\
\vmm & = & \vm + dt\cdot  G(\ummd,\vmmd,\wmmd) \\
\wmm & = & \wm +dt\cdot  H(\ummd,\vmmd,\wmmd) 
\end{eqnarray}
Now we considered  the diffusion in an implicit form,  then the schema becomes a semi-implicit one.  For each step,  the equations are solved by applying the FEM Galerkin-Ritz method described above.

\section{Numerical simulations} 
In this section, some numerical simulations are carried out in order to obtain some knowledge about the effect on the population density of the indirect defense mechanism of the resource against the meso--predator, which consists on the attraction of the main predator towards the resource. This will be contrasted with the results of the corresponding simulations of model \ref{modelo}, in which the random diffusion of the main predator is regulated by a tendency to move towards the gradient of the meso--predator; this is the case of predators actively searching for prey,  see \cite{chr}, \cite{cod} and the references cited there. In all this section,  we assume that  $\alpha = 5,\, a=2.0, \, b=5, \, c=.01,\, d=2.0,\, \beta =1.0,\, \gamma=1,\, \mu=.05,\, \nu =.05$. For this parameter values, the equilibrium points of  system (4) are $P_{0}=(0,0,0),$ $P_{1}=(K,0,0),$ $P_{2}=(\frac 2 
{99},\frac{200 (99 K-2)}{9801 K},0),$ $P_{3}=(k-2,2,\frac{2 (99 K-200)}{K})$. Existence and stablity properties of these equilibrium points are described in Table  \ref{tb1}.
\begin{table}[htp] 
\caption{}
\label{tb1}
\begin{center}
\begin{tabular}{|cccc|}
Point &  Existence Interval & Stable  & Unstable\\
\hline
$P_{0}$ & $K > 0 $  &  & $K>0$ \\
& & & \\
$P_{1}$ & $K>0$ & $K < \frac{2}{99}$ &$K > \frac{2}{99}$ \\
& & & \\
$P_{2}$ & $K>\frac{2}{99}$ &  $\frac{2}{99}< K < \frac{200}{99}$& $K> \frac{200}{99}$\\
& & & \\
$P_3$ & $K > \frac{200}{99}$ & $\frac{200}{99} <  K< 2.02063$ & 
\end{tabular}
\end{center}
\label{tabla1}
\end{table}
For the numerical computations we assume that $\Omega=[-1,1]\times[-1,1]$ and we have used the {\tt FreeFem++} software  \cite{Hecht2012}.

\subsection{Model \ref{modelo}: active-search hunting.}
In the following we consider Model  (\ref{modelo}) where the top predator is an active-search hunter.   We take 
$$\chi_1(v,w)=e_1 w-e_2v.$$
Therefore the top predator move towards the gradient of mesopredator only if its population density is large enough compared to that of the mesopredator. The ratio $\frac{e_2}{e_1}$ measures the defensive capacity of the mesopredator in terms of its population size; the larger this ratio, the greater the density of the predator required to advance towards the prey.
The parameter values are given by $\alpha=5, \, a= 2.0,\, b=5.0,\, c=0.1,\, d=2.0,\, \beta=1.0,\, \gamma=1.0,\, \mu=0.05,\, \nu=0.05,$  $d_0=0.1$, $d_1=1, d_2=1$. \\
Initial conditions for the spatial distribution of the resource, the meso-predator and top predator are considered as 
\begin{eqnarray*}
u_0(x,y) &= & 2\exp(-10(x^2+ (y-.9)^2))(1-x^2)^2(1 - y^2)^2; \\
v_0(x,y) &= & 2\exp(-(x + .9)^2 - (y + .9)^2)(1 - x^2)^2(1 - y^2)^2;\\
w_0(x,y) &= & 1.5
\end{eqnarray*}
for all $x,y\in \Omega$. 
In contrast with the meso predator and the resource, the top predator is initially uniformly distributed, 
(see Fig \ref{figCondInit1}).
\begin{figure}[hbt] 
\centerline{
\begin{tabular}{ccc}
\includegraphics[scale=0.15]{./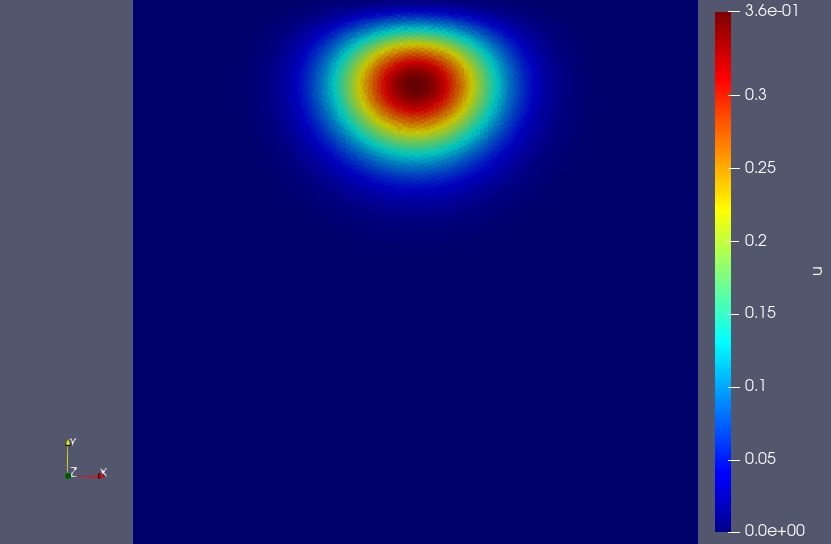} &
\includegraphics[scale=0.15]{./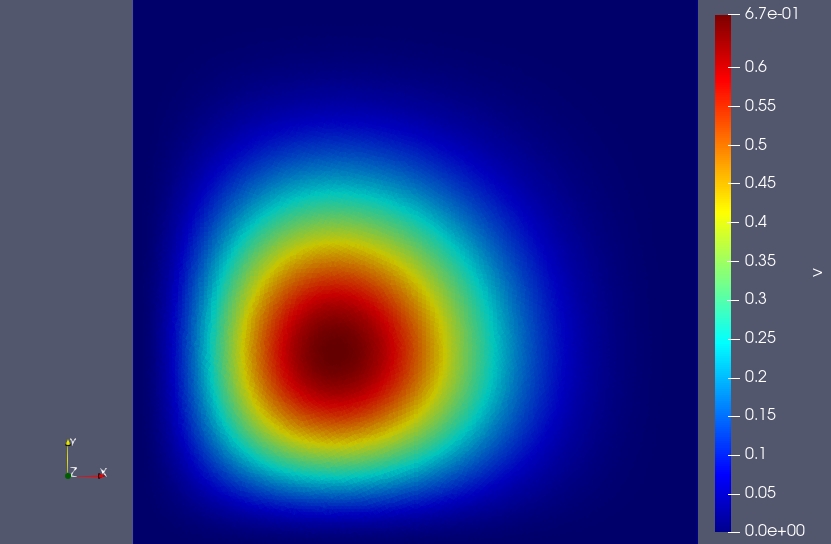} &
\includegraphics[scale=0.15]{./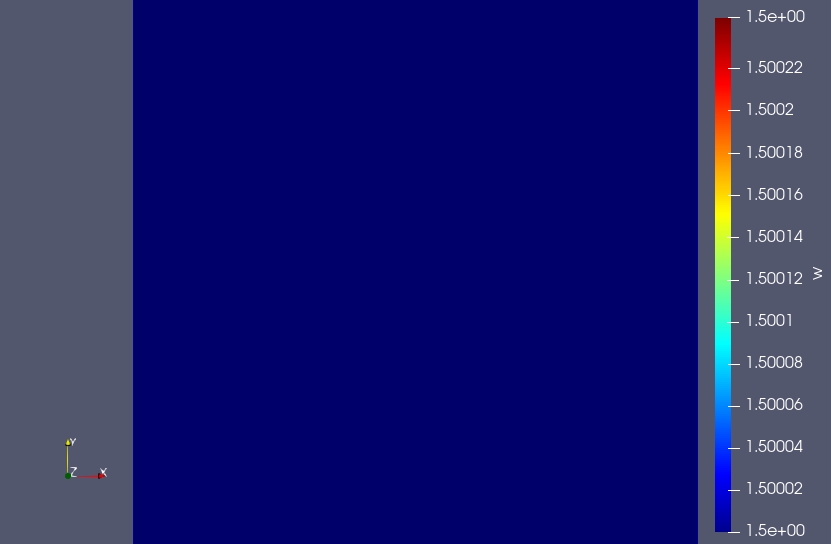} \\
($a_1$) $u,\quad t=0$ & ($b_1$) $v,\quad t=0$ & ($c_1$) $w,\quad t=0$ 
\end{tabular}}
\caption{{\em Contour plots} of time evolution of the resource $u$,  mesopredador $v$ and top predador $w$ at different times.} 
\label{figCondInit1}
\end{figure}

\subsubsection{Defensive capacity and species distribution}
We consider five different defensive capacities of the prey.
The suitability of the habitat of the resource is given by
\begin{eqnarray*}
K(x,y) &=&  2\exp(-5((x+.75)^2+(y-.75)^2))+2\exp(-5((x-.75)^2+(y+.75)^2)),   \\
          &  &  +2 \exp(-5((x+.75)^2+(y+.75)^2))+2\exp(-5((x-.75)^2+(y-.75)^2)).
\end{eqnarray*}
Notice that the range of $K$ in $\Omega$ is contained in the interval $(\frac{2}{99},\frac{200}{99})$. Therefore, according to Table \ref{tb1} system \ref{mod3f}  without diffusion does not have the coexistence point $P_{3}$ and  point $P_{2}$ is asymptotically stable. Thus, without  diffusion the  top predator $w$ would become extinct.

First, let  $ e_1=1.0,\, e_2=1.0.$ In this case, the  defensive capacity of the prey is neutral. Top predator move towards mesopredator whenever its density be greater than the one of the mesopredator

\begin{figure}[hbt]
\begin{tabular}{ccc}

\includegraphics[scale=0.125]{./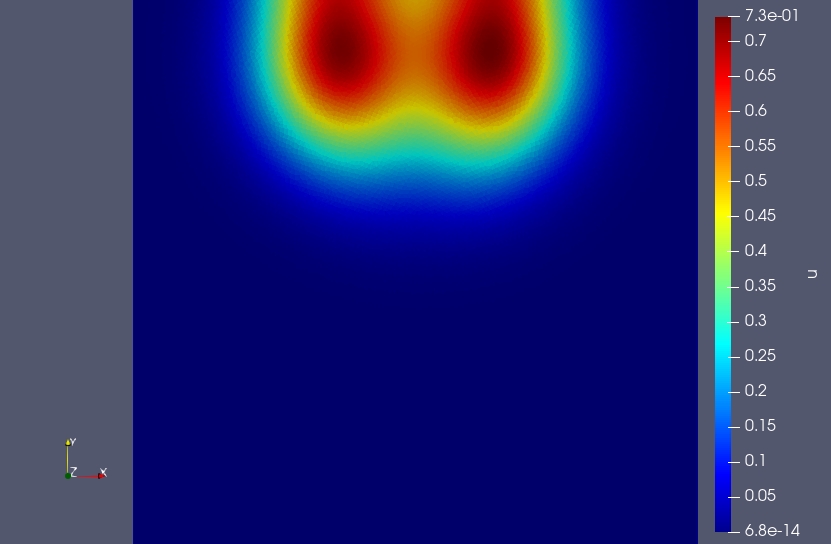} &
\includegraphics[scale=0.125]{./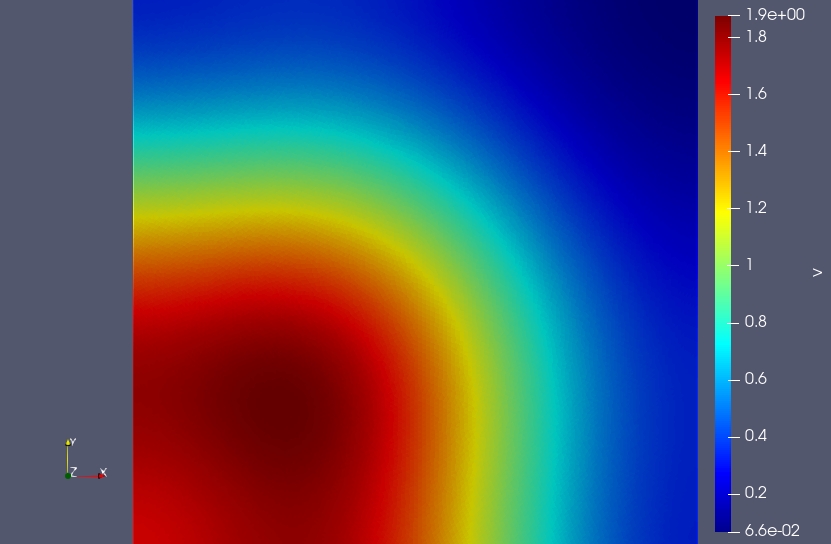} &
\includegraphics[scale=0.125]{./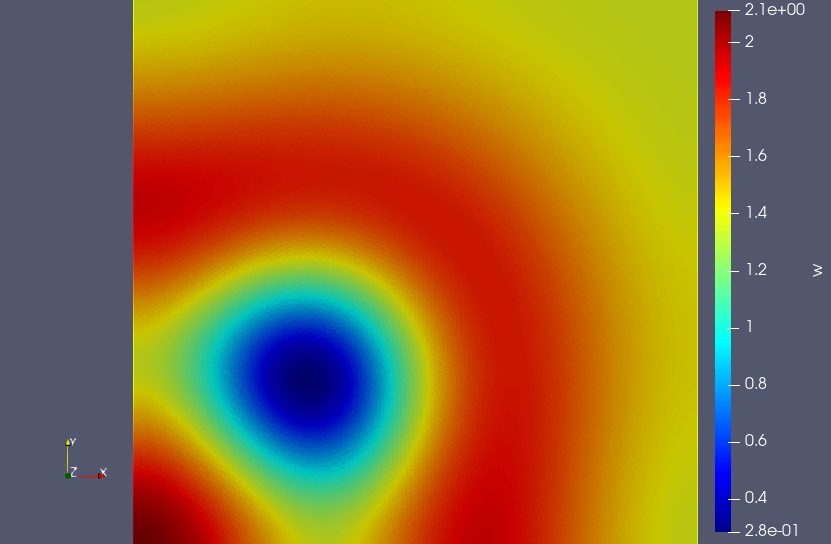} \\
($a_2$) $u,\quad t=0.1$ & ($b_2$) $v,\quad t=0.1$ & ($c_2$) $w,\quad t=0.1$ \\  

\includegraphics[scale=0.125]{./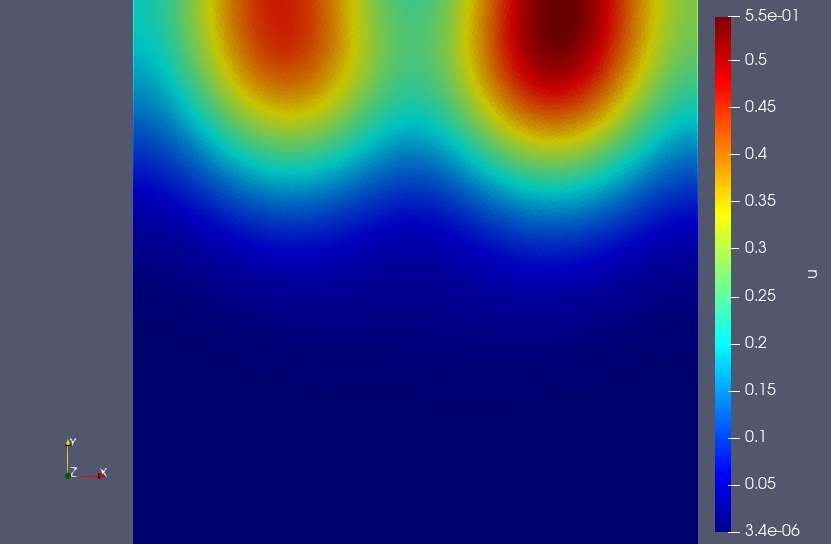} &
\includegraphics[scale=0.125]{./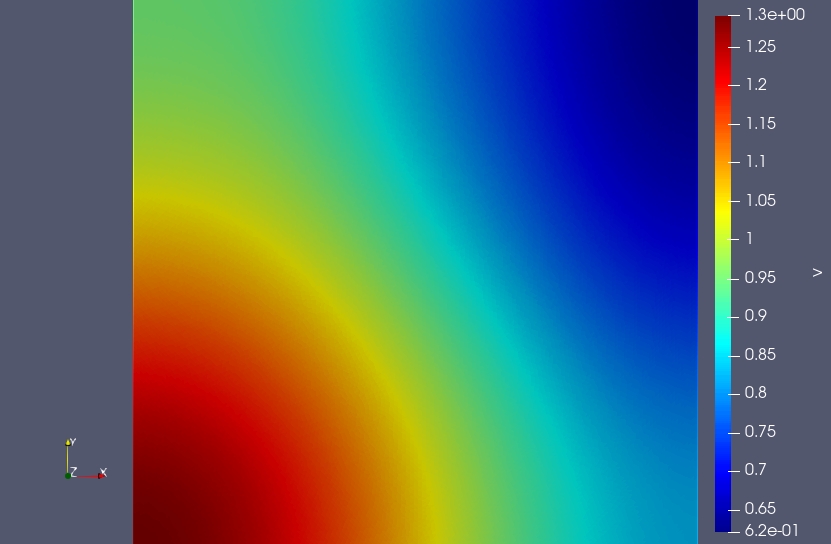} &
\includegraphics[scale=0.125]{./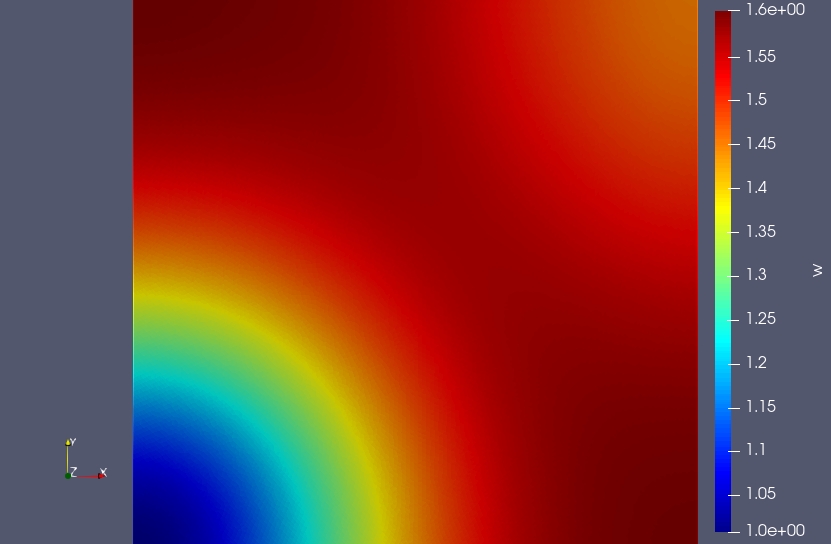} \\
($a_3$) $u,\quad t=0.5$ & ($b_3$) $v,\quad t=0.5$ & ($c_3$) $w,\quad t=0.5$ \\  
\includegraphics[scale=0.125]{./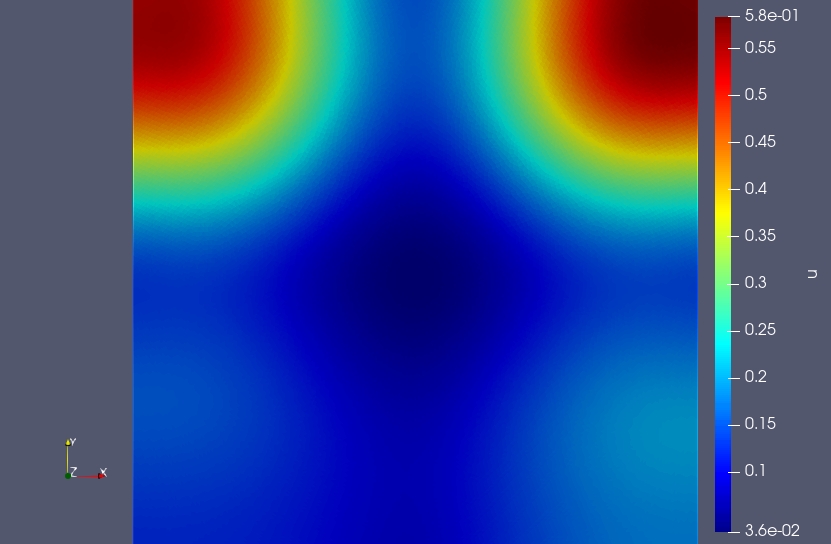} &
\includegraphics[scale=0.125]{./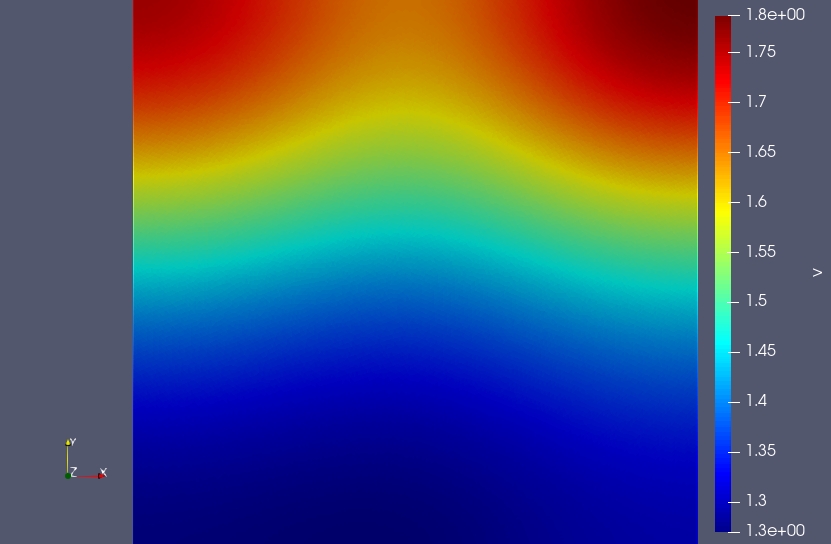} & 
\includegraphics[scale=0.125]{./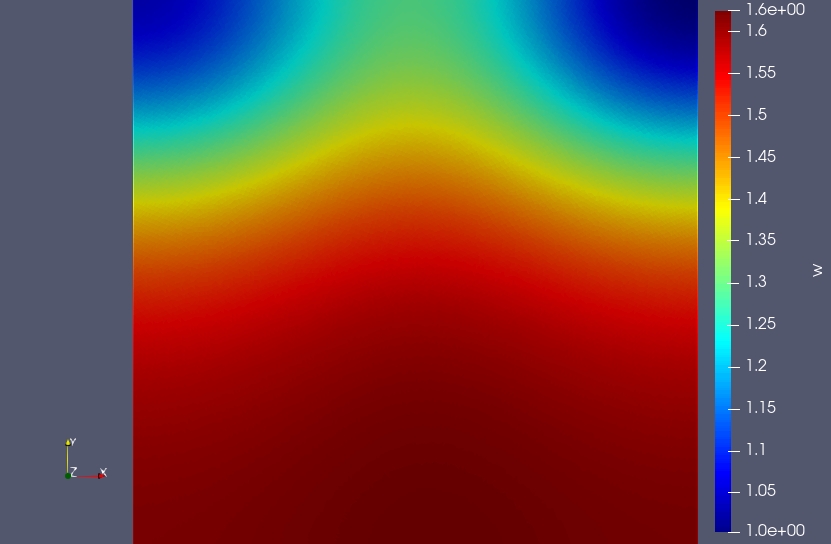} \\
($a_4$) $u,\quad t=2.0$ & ($b_4$) $v,\quad t=2.0$ & ($c_4$) $w,\quad t=2.0$ \\

\includegraphics[scale=0.125]{./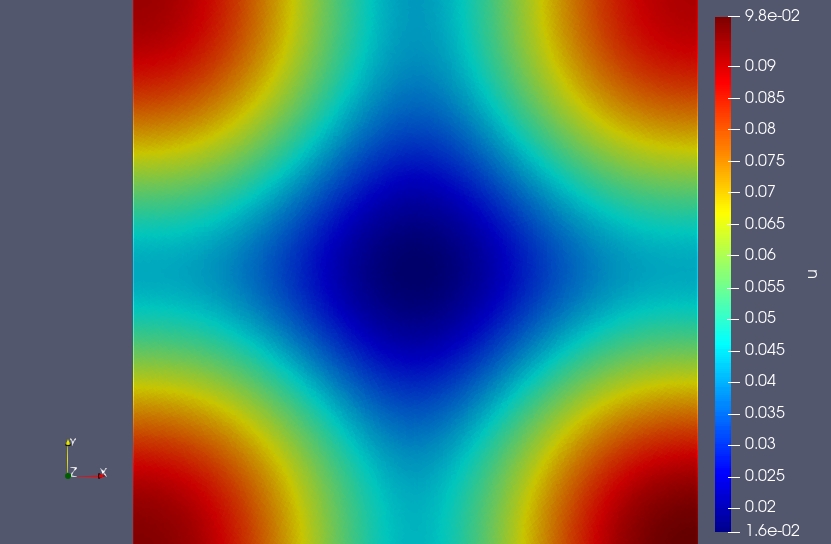} &
\includegraphics[scale=0.125]{./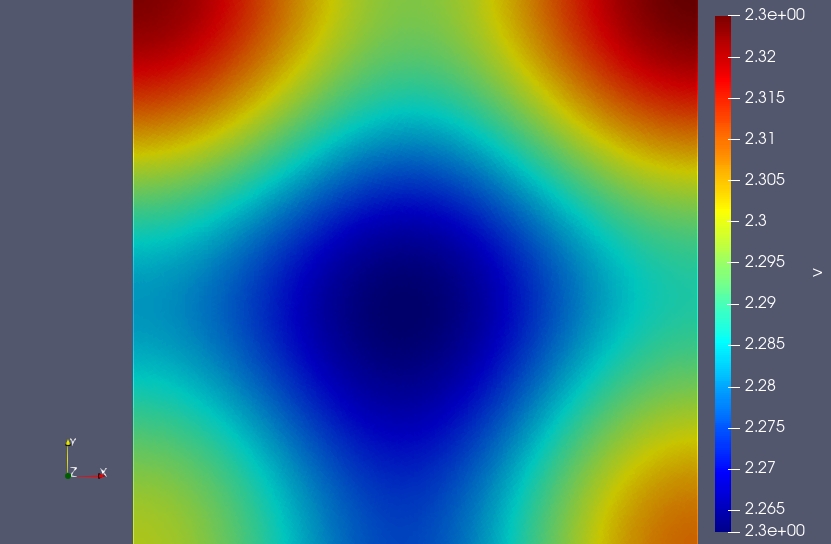} & 
\includegraphics[scale=0.125]{./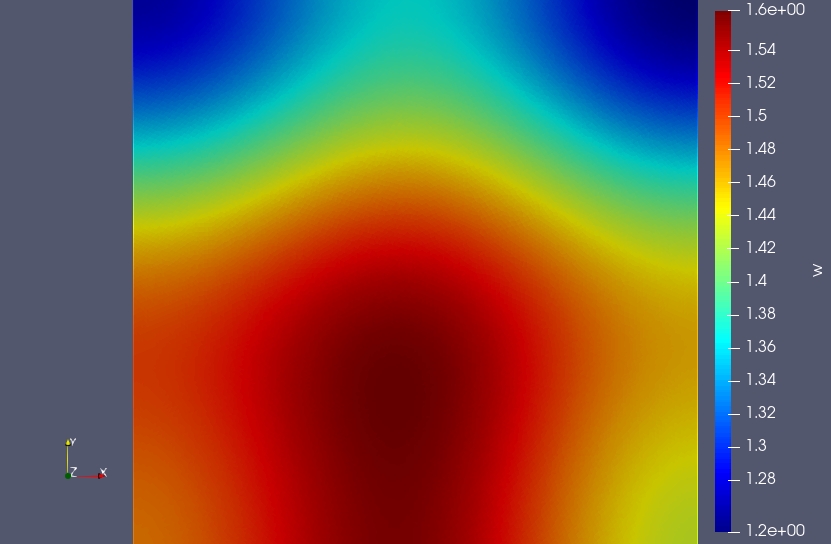} \\
($a_{5}$) $u,\quad t=4.0$ & ($b_{5}$) $v,\quad t=4.0$ & ($c_{5}$) $w,\quad t=4.0$ \\
\includegraphics[scale=0.125]{./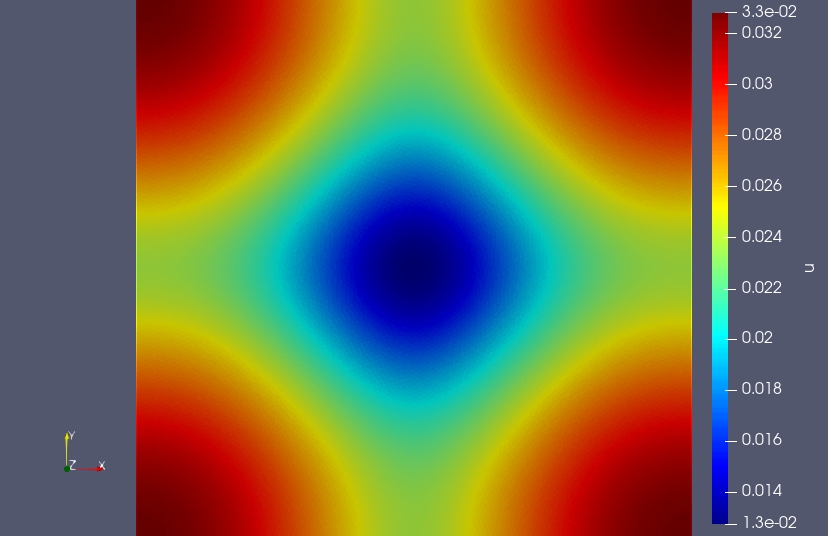} &
\includegraphics[scale=0.125]{./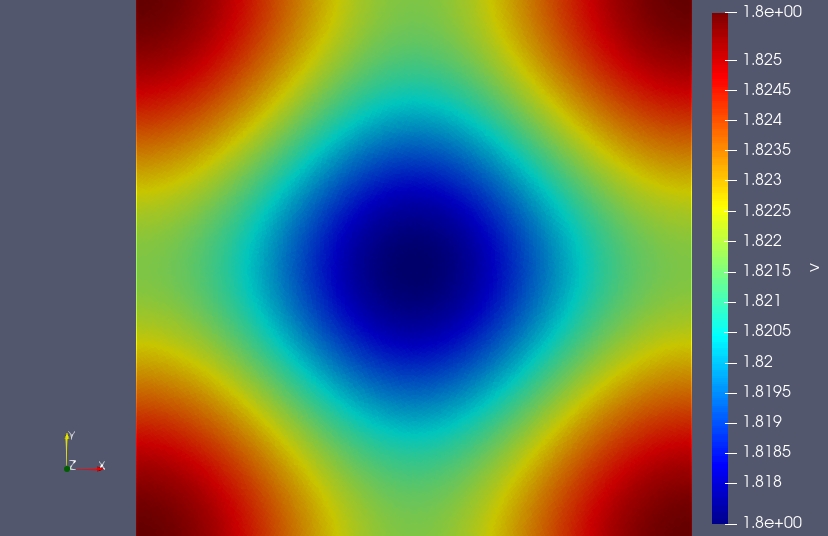} & 
\includegraphics[scale=0.125]{./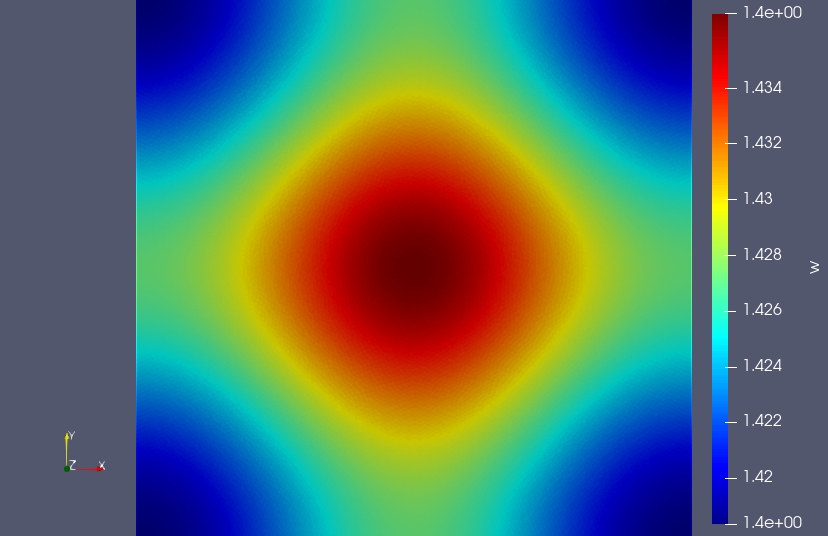} \\
($a_{6}$) $u,\quad t=20.0$ & ($b_{6}$) $v,\quad t=20.0$ & ($c_{6}$) $w,\quad t=20.0$ 

\end{tabular}
\caption{Evolution of the spatial distribution of the three species. $e_1=1.0, e_2=1.0$}
\label{ff2}
\end{figure}

\clearpage
Second, let  $ e_1=1.0, e_2=0.5$ In this case, the mesopredator defense against of top predator is lesser than the above case. Thus, we observe that predators are closer to the mesopredators than in the first case (see figures \ref{ff2} and \ref{Figu3b}). 
\begin{figure}[hbt] 
\begin{tabular}{ccc}
\includegraphics[scale=0.125]{./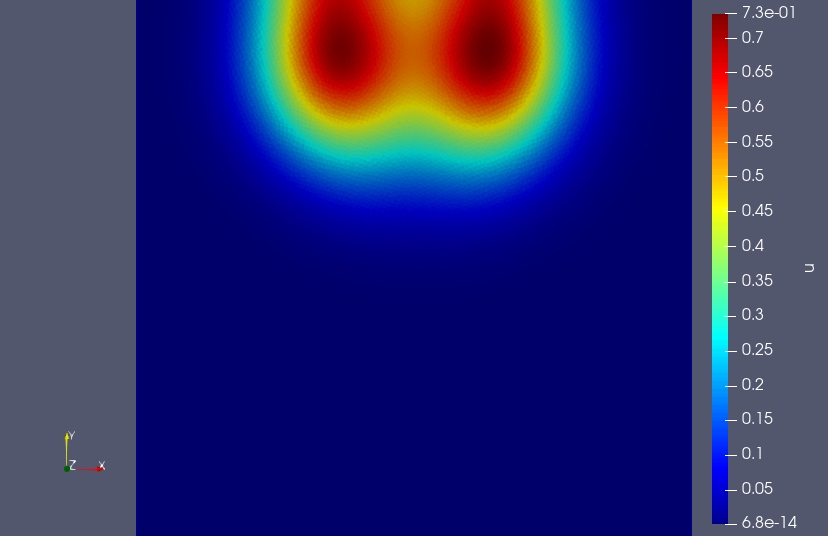} &
\includegraphics[scale=0.125]{./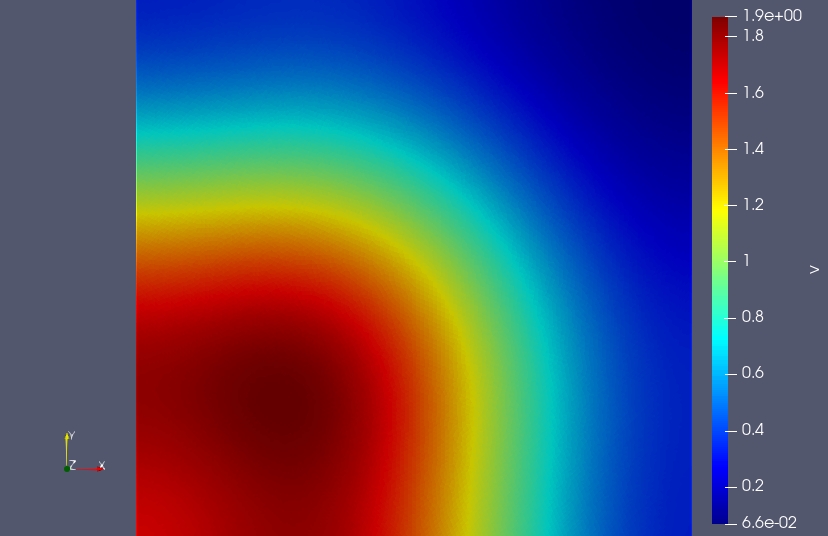} &
\includegraphics[scale=0.125]{./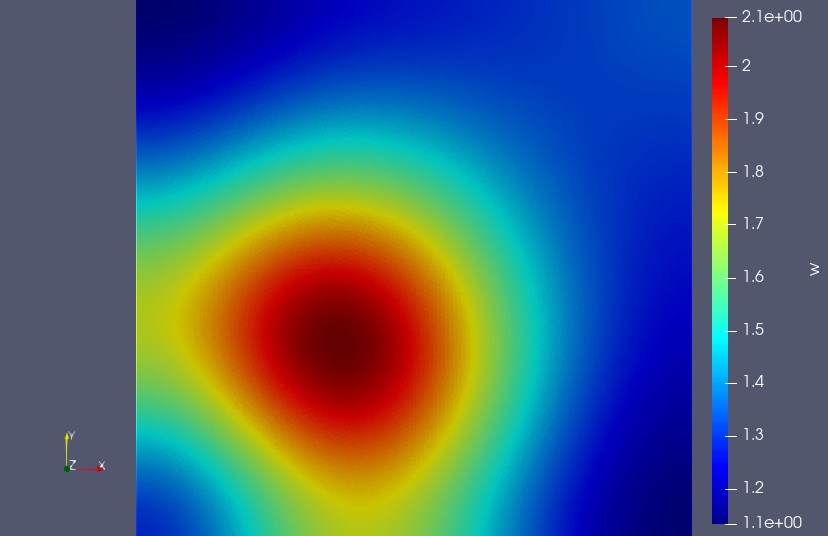} \\
($a_1$) $u,\quad t=0.1$ &  $v,\quad t=0.1$ &  $w,\quad t=0.1$  \\
\includegraphics[scale=0.125]{./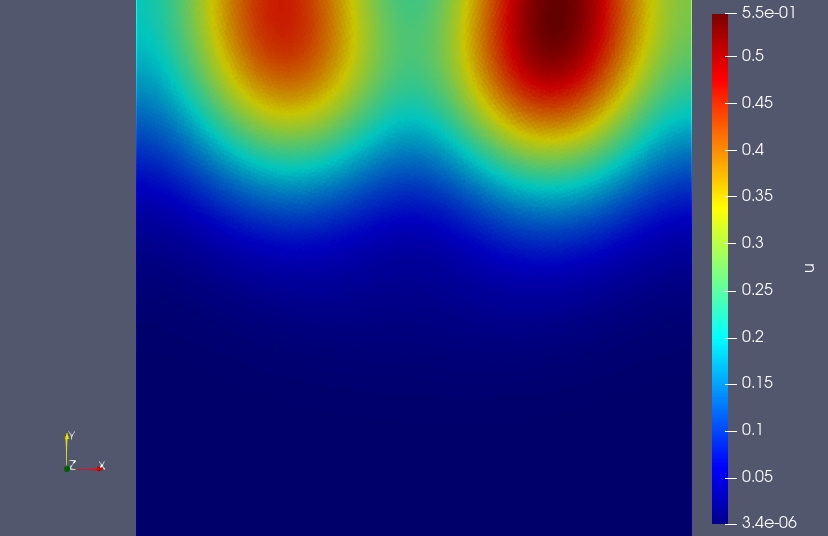} &
\includegraphics[scale=0.125]{./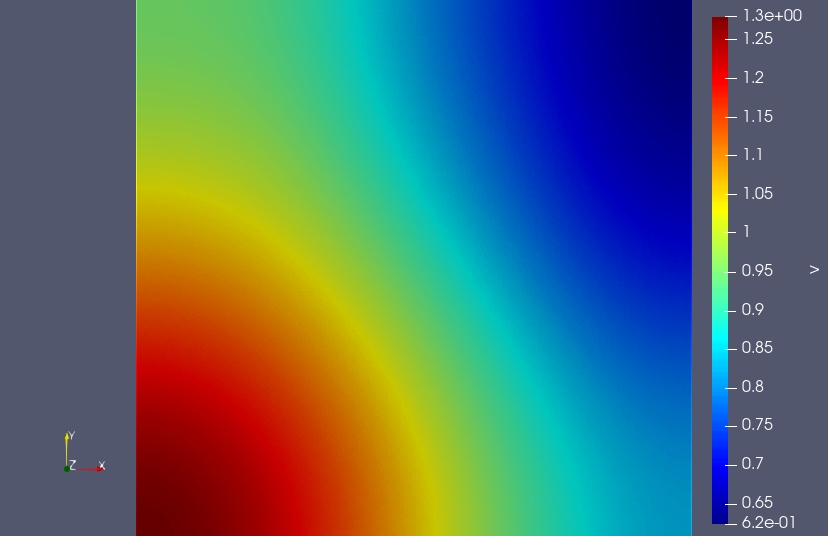} & 
\includegraphics[scale=0.125]{./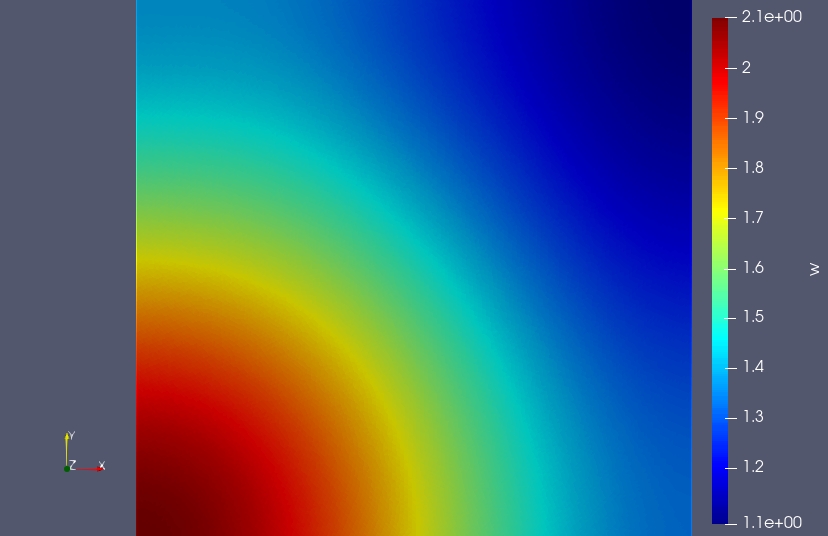} \\
($a_2$) $u,\quad t=0.5$ &  $v,\quad t=0.5$ &  $w,\quad t=0.5$  \\
\includegraphics[scale=0.125]{./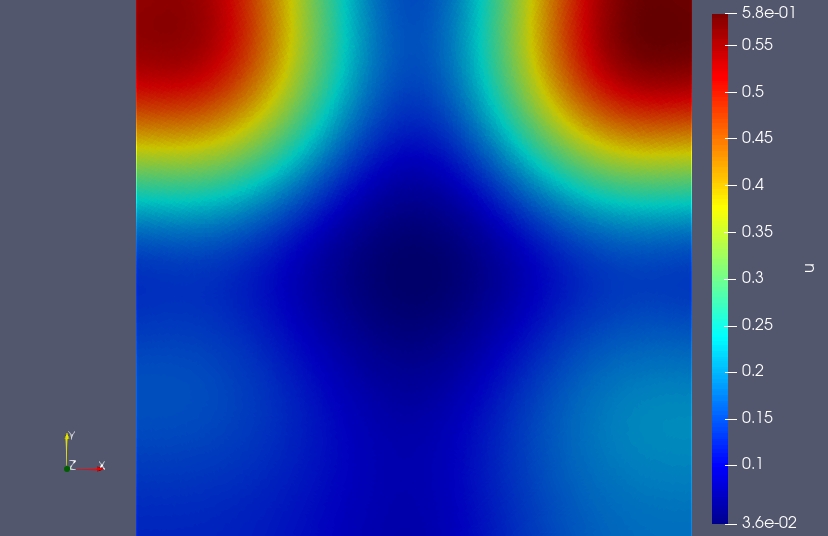} &
\includegraphics[scale=0.125]{./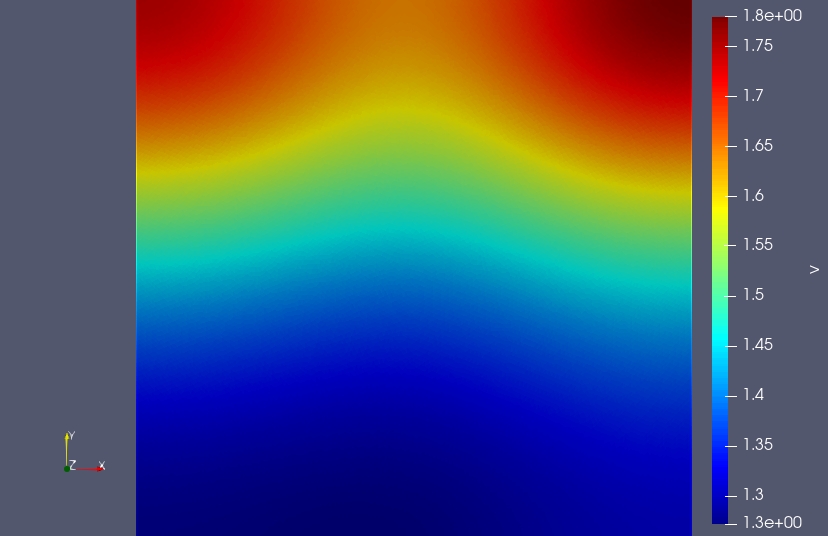} & 
\includegraphics[scale=0.125]{./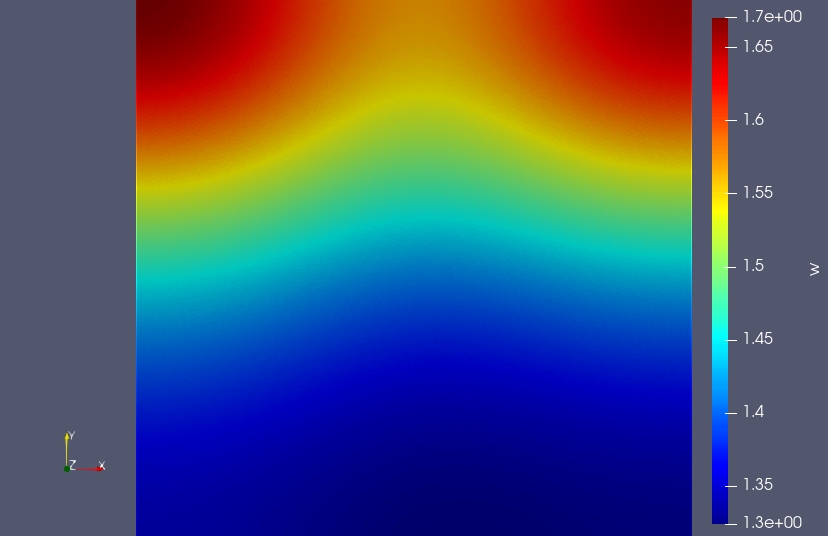} \\
($a_3$) $u,\quad t=2.0$ &  $v,\quad t=2.0$ &  $w,\quad t=2.0$  \\
\includegraphics[scale=0.125]{./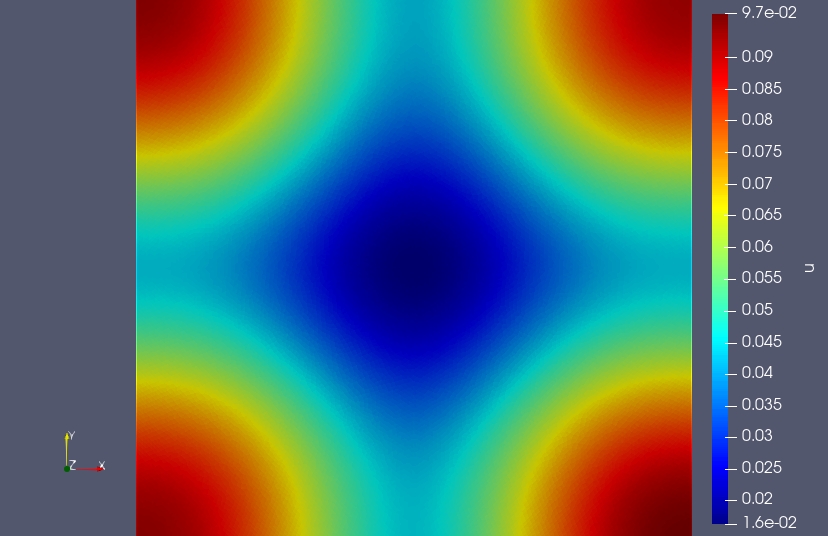} &
\includegraphics[scale=0.125]{./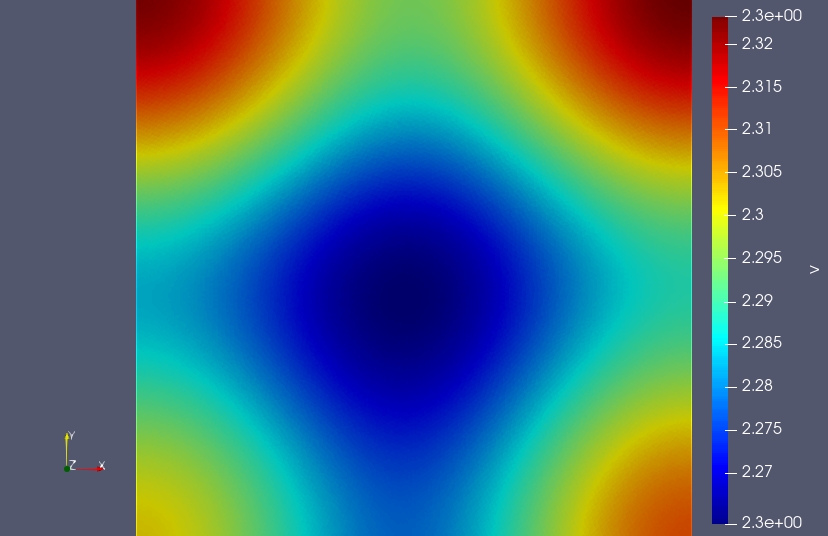} & 
\includegraphics[scale=0.125]{./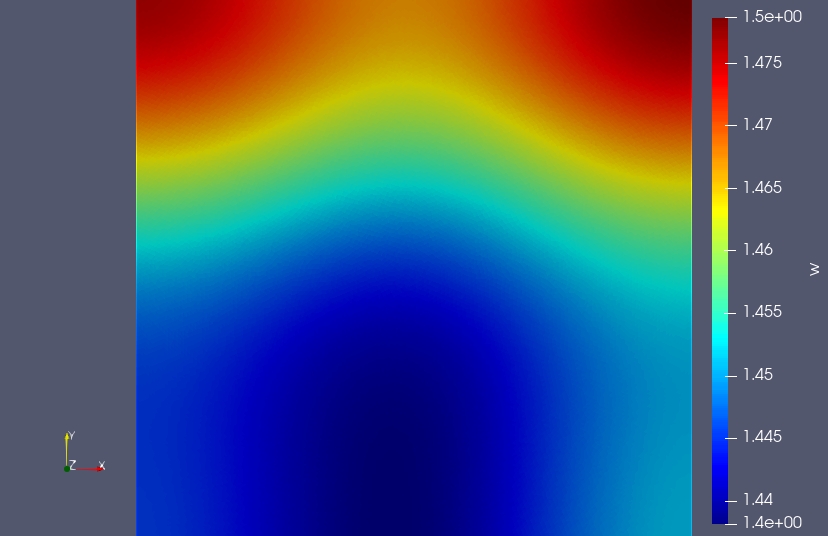} \\
($a_{4}$) $u,\quad t=4.0$ & $v,\quad t=4.0$ &  $w,\quad t=4.0$  \\

\includegraphics[scale=0.125]{./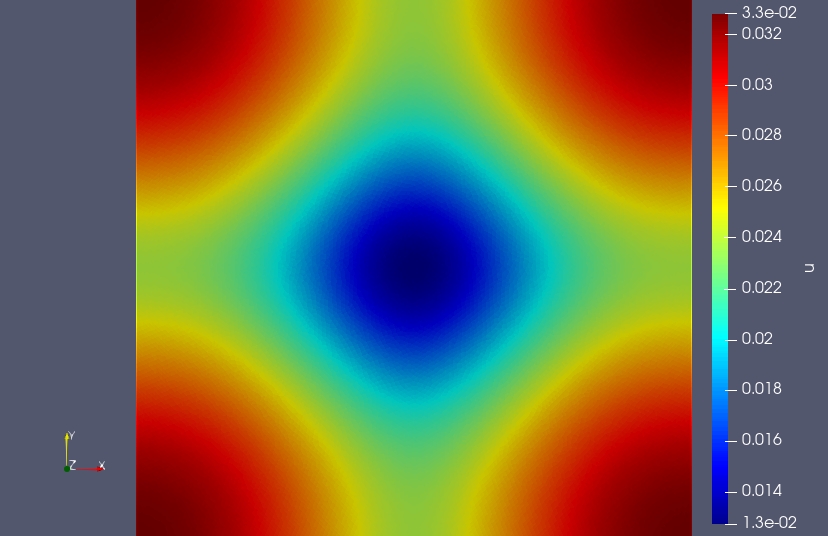} &
\includegraphics[scale=0.125]{./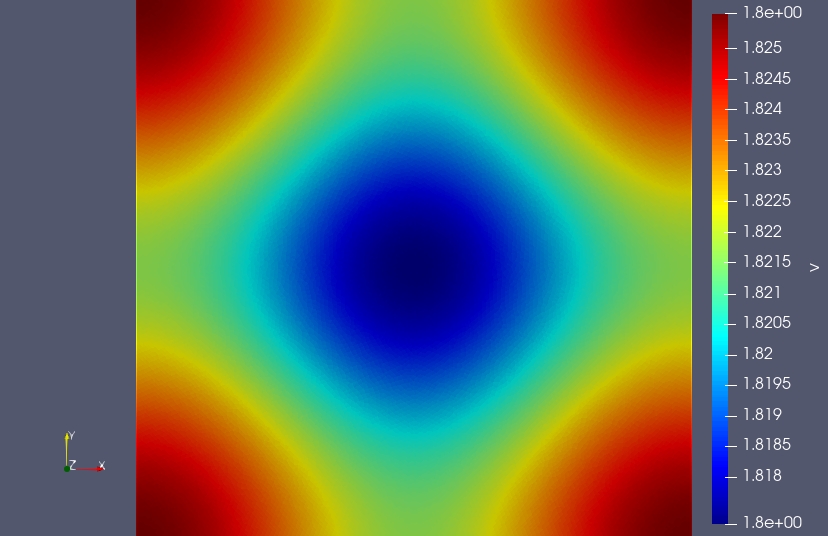} & 
\includegraphics[scale=0.125]{./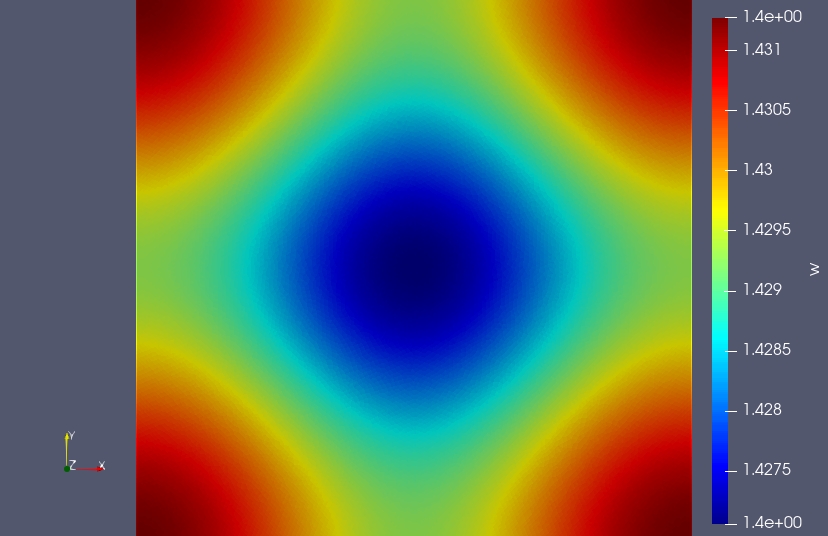} \\
($a_{5}$) $u,\quad t=20.0$ & $v,\quad t=20.0$ &  $w,\quad t=20.0$ 
\end{tabular}
\caption{Evolution of the spatial distribution of the three species. $e_1=1.0, e_2=0.5$}
 \label{Figu3b}
\end{figure}

\clearpage
Third, let  $ e_1=1.0,\, e_2=2.0$. Prey presents a strong defense capacity. Notice that predators tends to move towards the lower density areas of the prey population, (see Figures (\ref{ff2}) and (\ref{fig12/3})).

\begin{figure}[hbt]
\begin{tabular}{ccc}
\includegraphics[scale=0.125]{./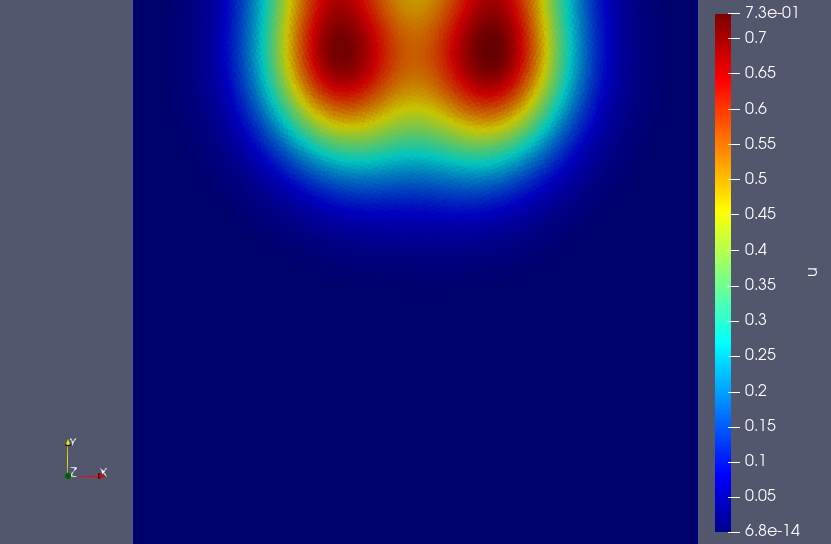} &
\includegraphics[scale=0.125]{./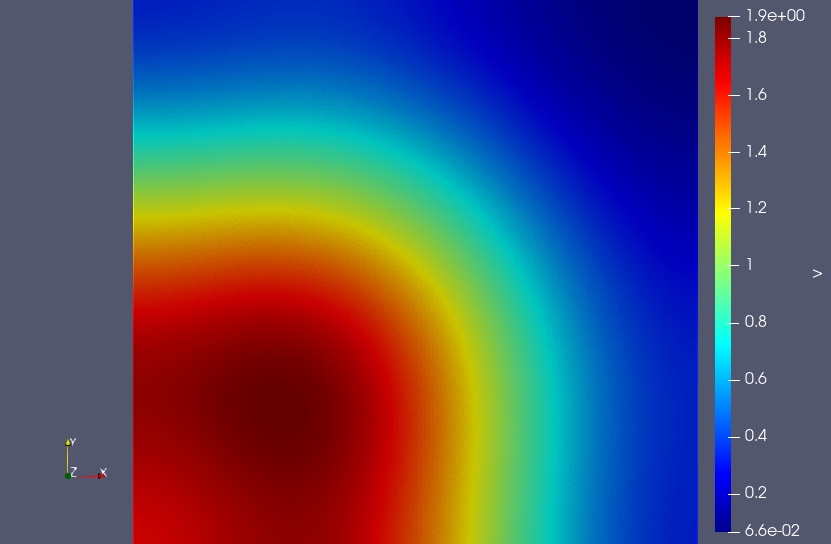} &
\includegraphics[scale=0.125]{./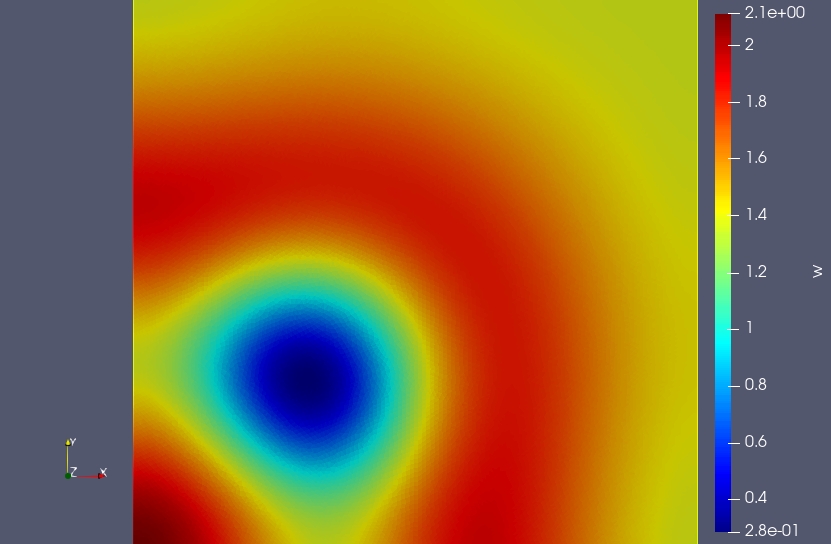} \\
($a_2$) $u,\quad t=0.1$ & ($b_2$) $v,\quad t=0.1$ & ($c_2$) $w,\quad t=0.1$  \\

\includegraphics[scale=0.125]{./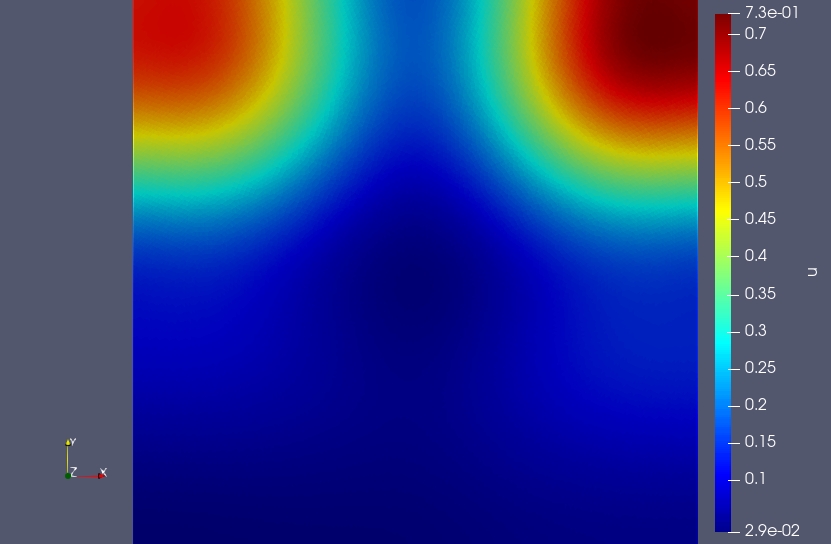} &
\includegraphics[scale=0.125]{./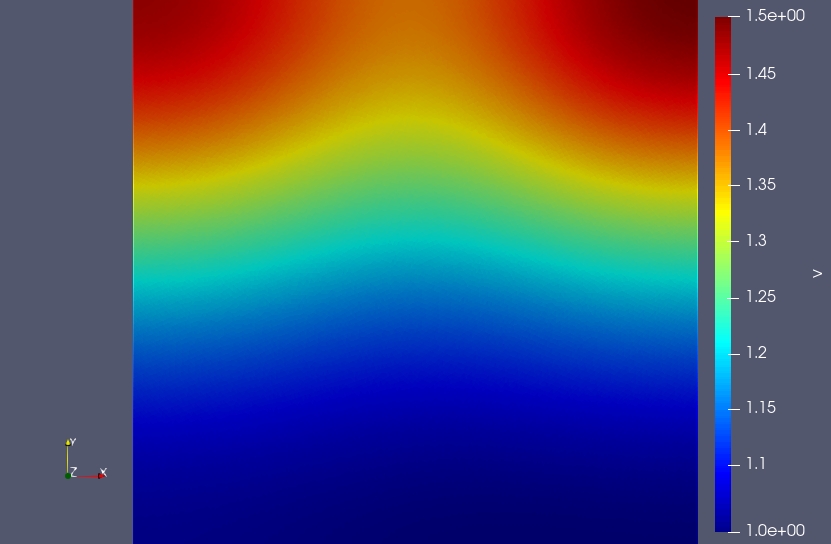} & 
\includegraphics[scale=0.125]{./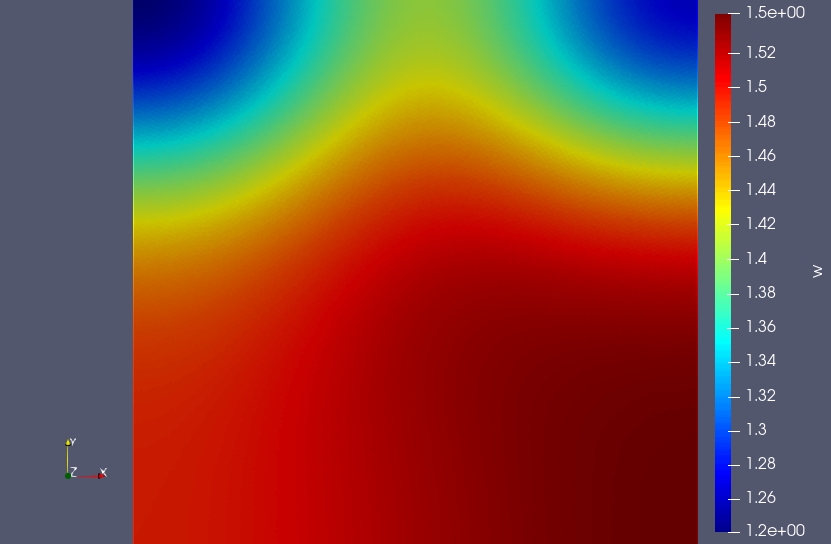} \\
($a_3$) $u,\quad t=1.5$ & ($b_3$) $v,\quad t=1.5$ & ($c_3$) $w,\quad t=1.5$  \\

\includegraphics[scale=0.125]{./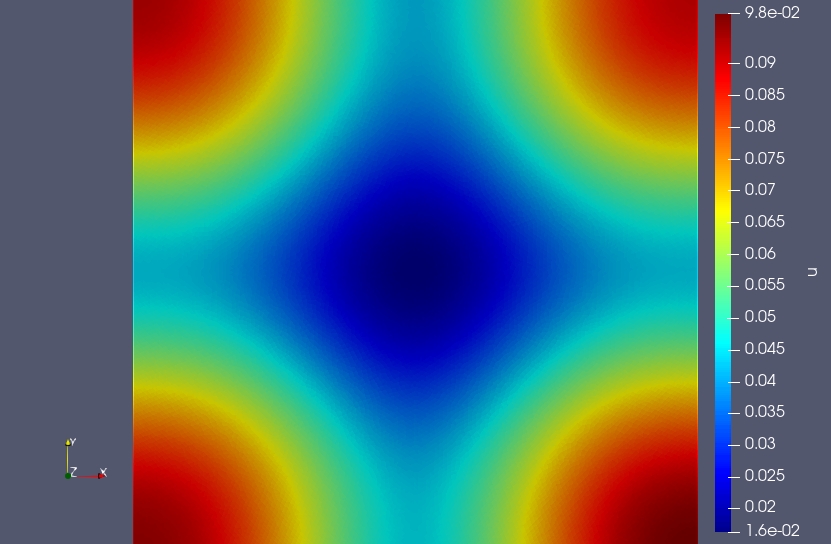} &
\includegraphics[scale=0.125]{./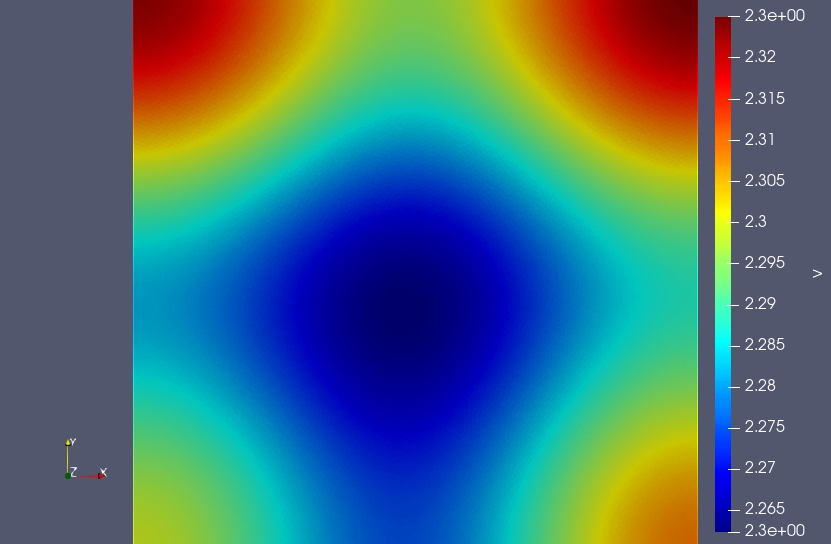} & 
\includegraphics[scale=0.125]{./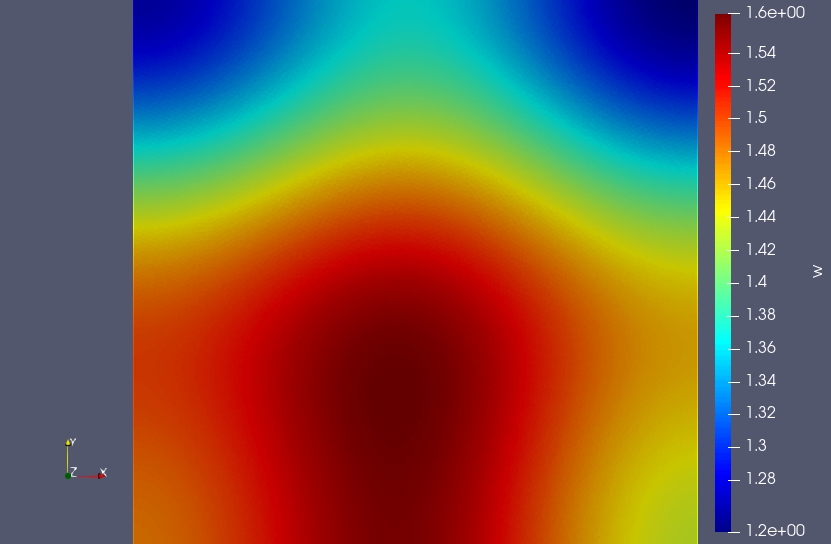} \\
($a_{4}$) $u,\quad t=4.0$ & ($b_{4}$) $v,\quad t=4.0$ & ($c_{4}$) $w,\quad t=4.0$ \\
\includegraphics[scale=0.125]{./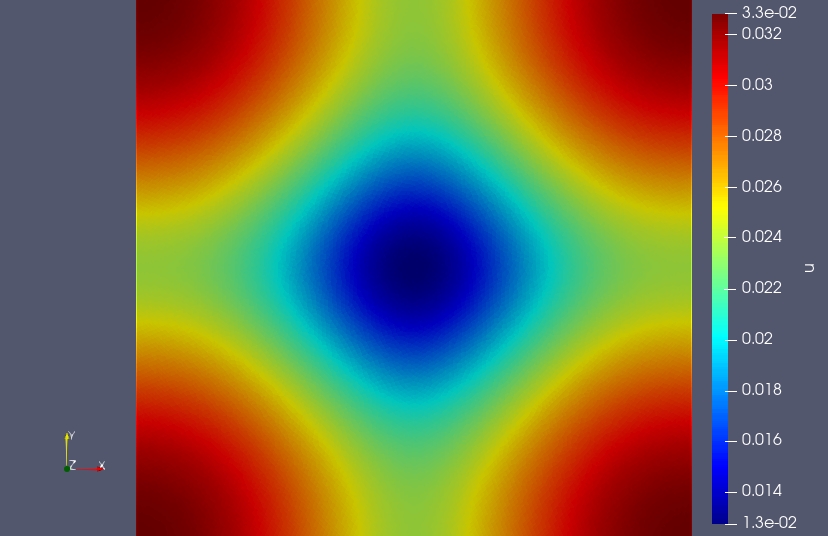} &
\includegraphics[scale=0.125]{./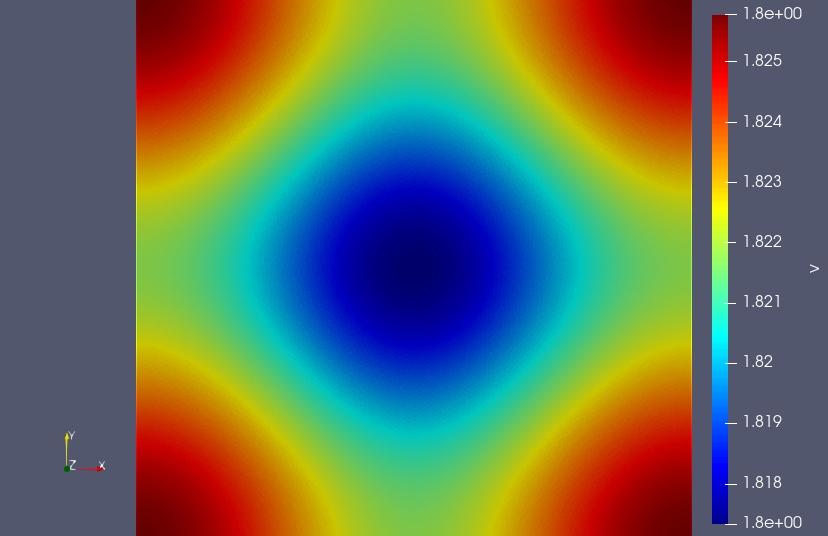} & 
\includegraphics[scale=0.125]{./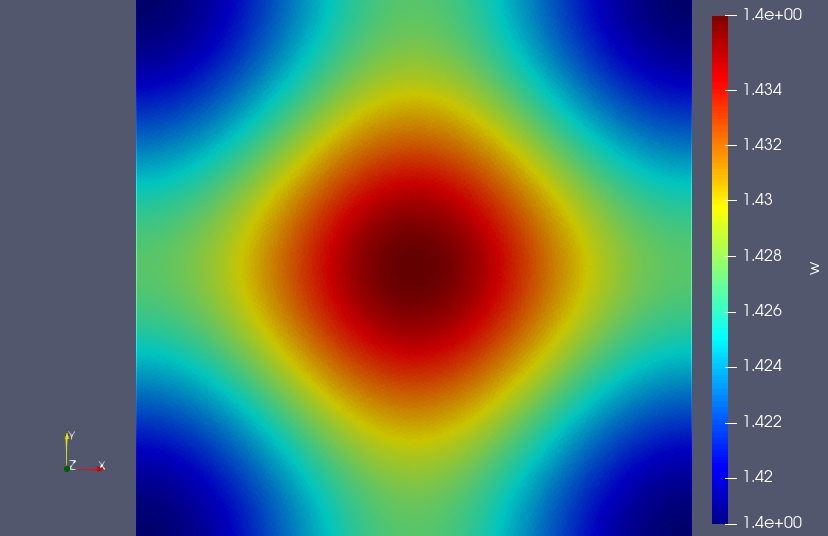} \\
($a_{5}$) $u,\quad t=20.0$ & ($b_{5}$) $v,\quad t=20.0$ & ($c_{5}$) $w,\quad t=20.0$ 
\end{tabular}
\caption{Evolution of the spatial distribution of the three species. $e_1=1.0, e_2=2.0$}
\label{fig12/3}
\end{figure}

\clearpage
Fourth case, let  $ e_1=1.0,\, e_2=10.0$. Prey presents still a defense capacity stronger than the previous case. 
\begin{figure}[hbt]
\begin{tabular}{ccc}
\includegraphics[scale=0.125]{./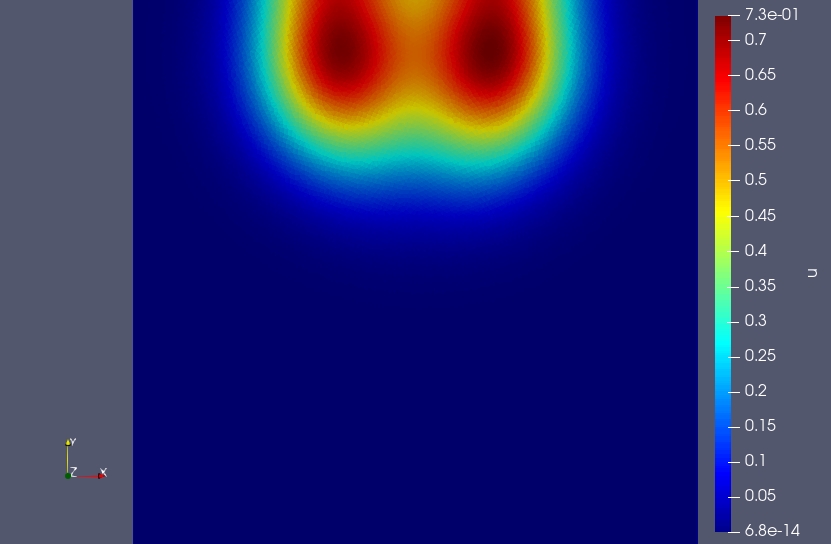} &
\includegraphics[scale=0.125]{./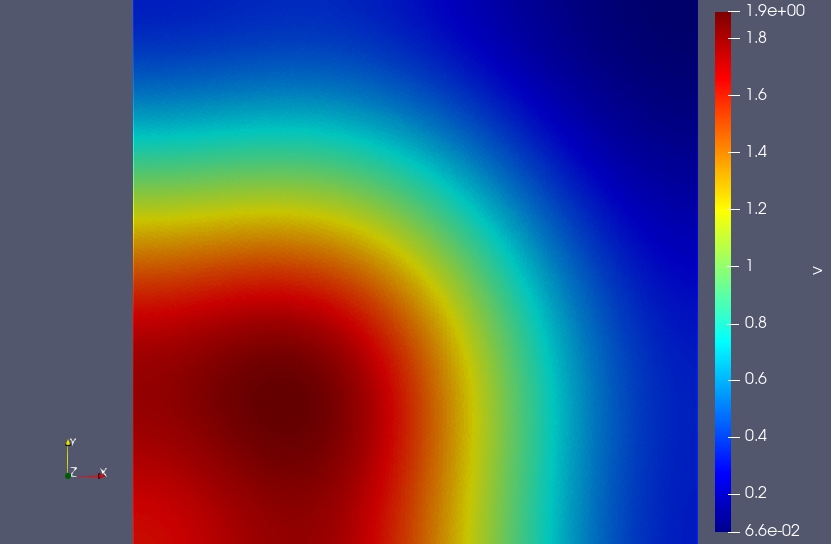} &
\includegraphics[scale=0.125]{./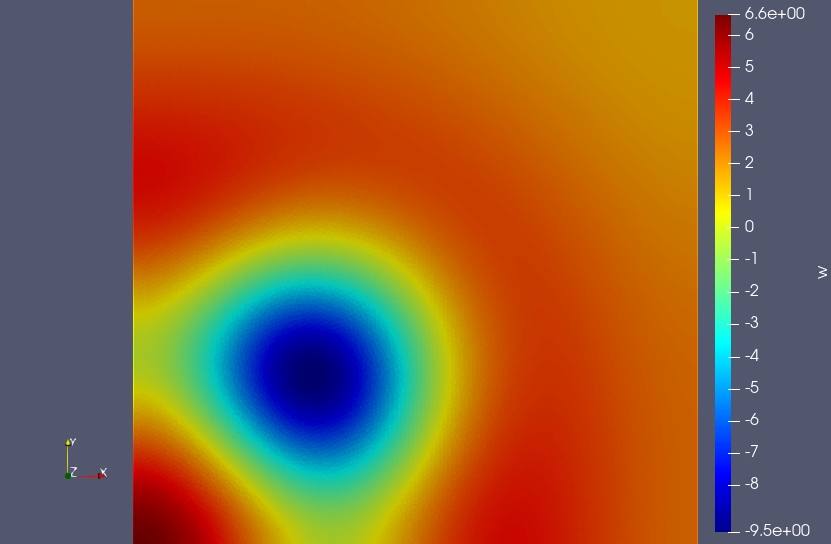} \\
($a_2$) $u,\quad t=0.1$ & ($b_2$) $v,\quad t=0.1$ & ($c_2$) $w,\quad t=0.1$  \\
\includegraphics[scale=0.125]{./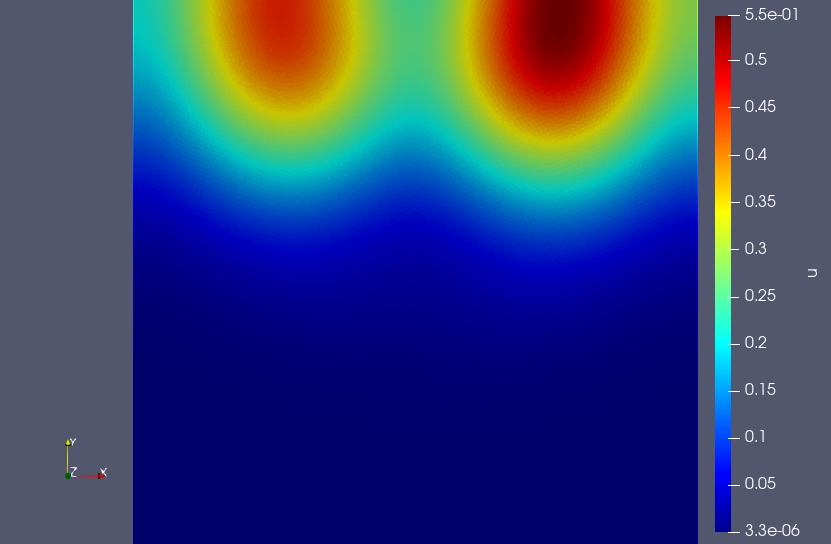} &
\includegraphics[scale=0.125]{./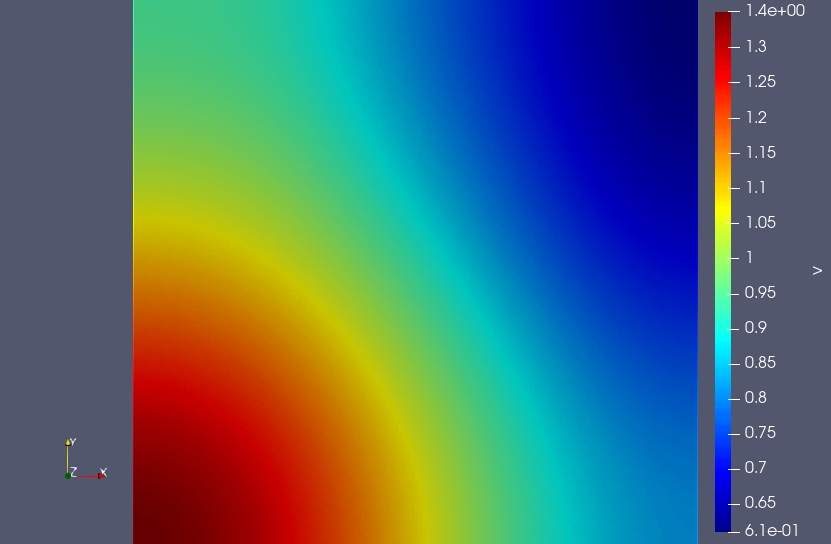} &
\includegraphics[scale=0.125]{./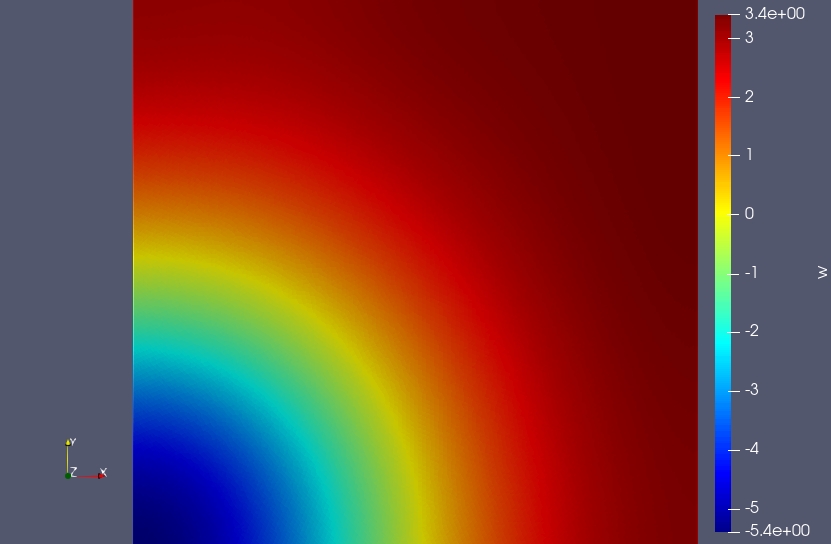} \\
($a_3$) $u,\quad t=0.5$ & ($b_3$) $v,\quad t=0.5$ & ($c_3$) $w,\quad t=0.5$  \\

\includegraphics[scale=0.125]{./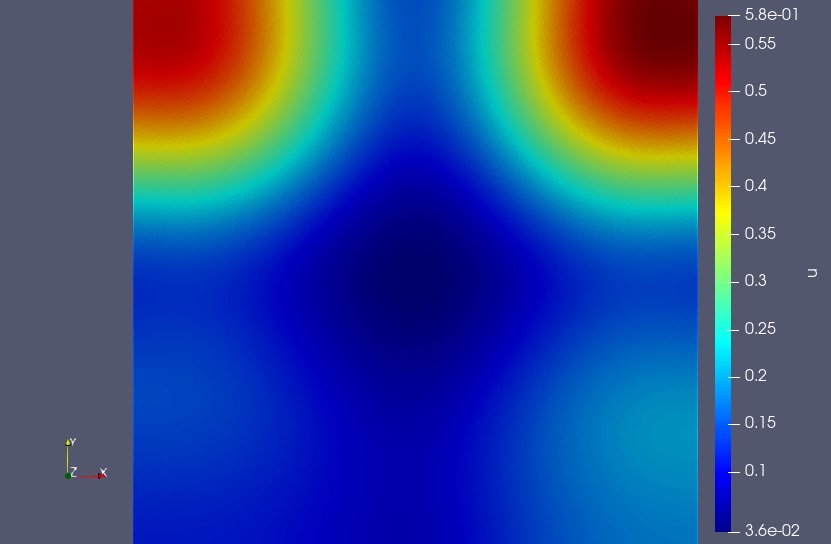} &
\includegraphics[scale=0.125]{./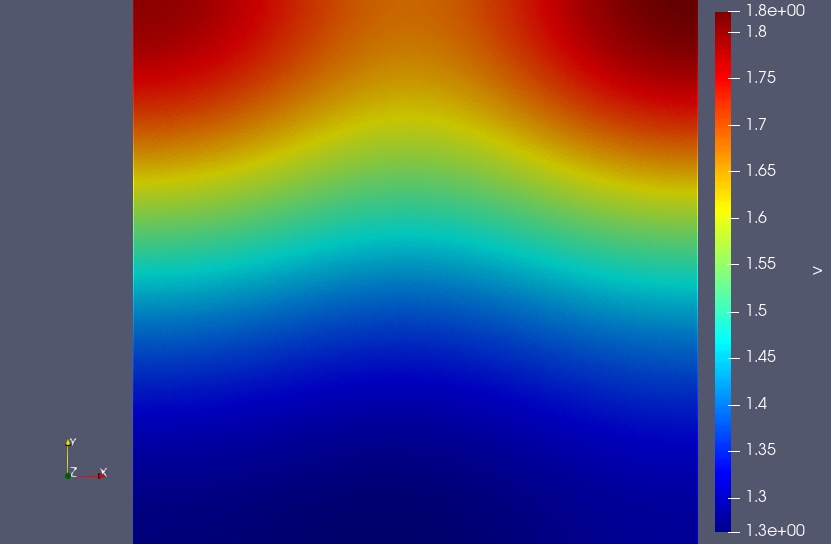} & 
\includegraphics[scale=0.125]{./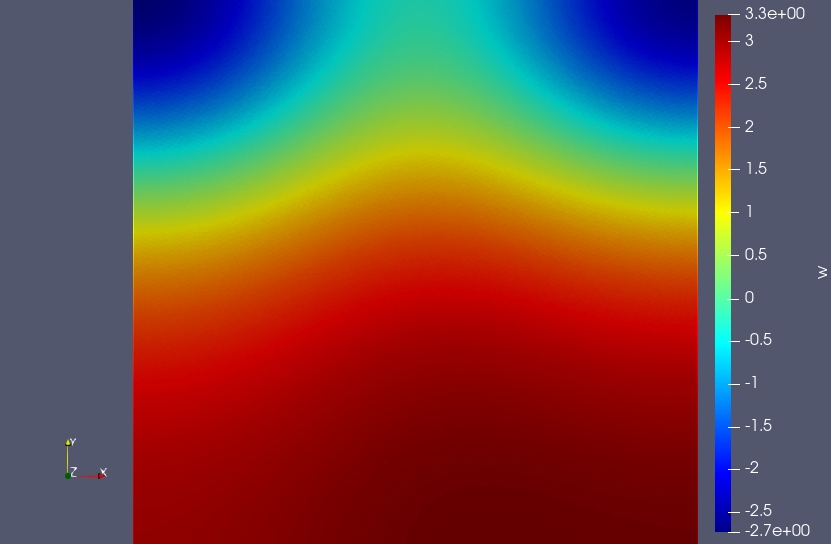} \\
($a_4$) $u,\quad t=2.0$ & ($b_4$) $v,\quad t=2.0$ & ($c_4$) $w,\quad t=2.0$ \\

\includegraphics[scale=0.125]{./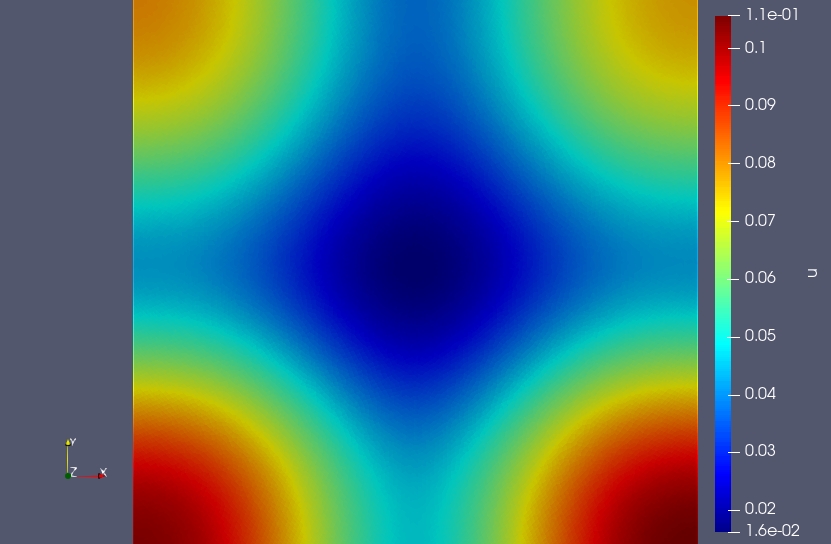} &
\includegraphics[scale=0.125]{./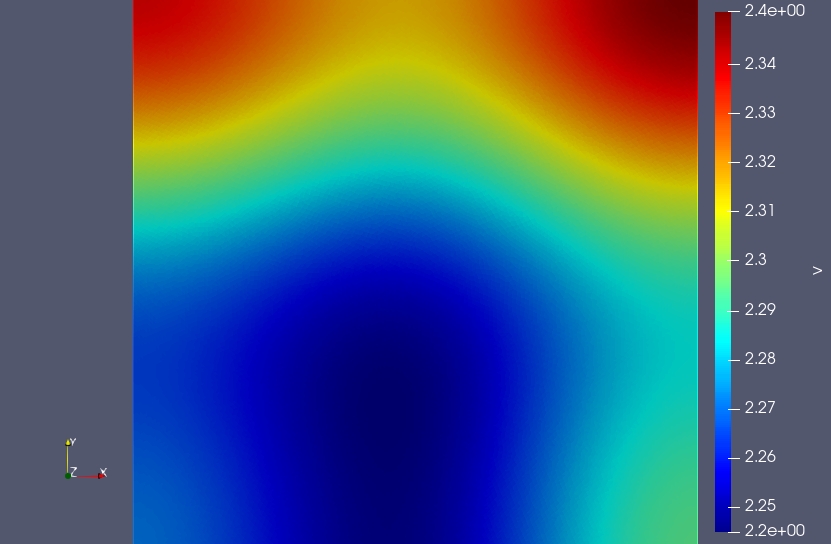} & 
\includegraphics[scale=0.125]{./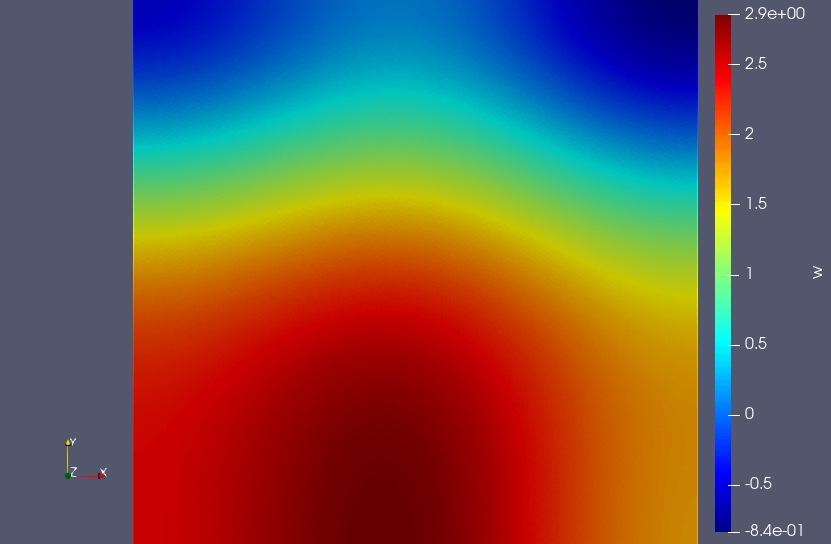} \\
($a_{5}$) $u,\quad t=4.0$ & ($b_{5}$) $v,\quad t=4.0$ & ($c_{5}$) $w,\quad t=4.0$ \\
\includegraphics[scale=0.125]{./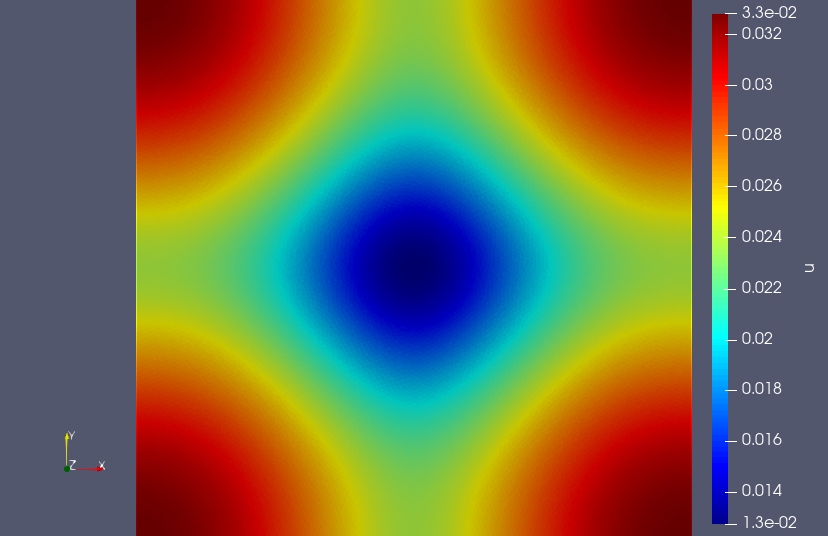} &
\includegraphics[scale=0.125]{./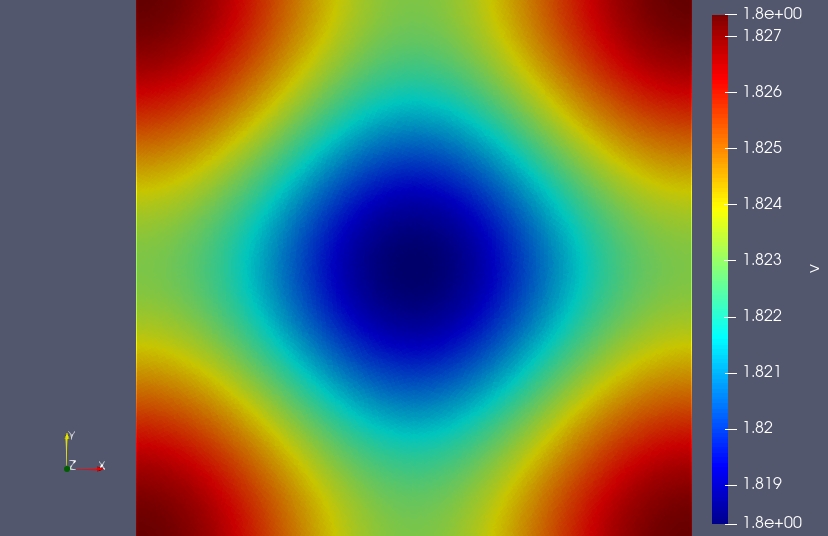} & 
\includegraphics[scale=0.125]{./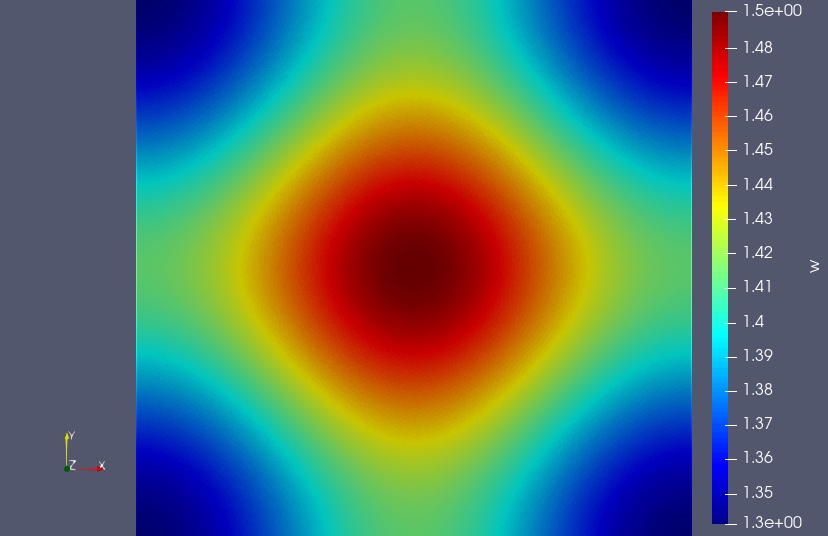} \\
($a_{6}$) $u,\quad t=20.0$ & ($b_{6}$) $v,\quad t=20.0$ & ($c_{6}$) $w,\quad t=20.0$ \\
\end{tabular}
\caption{Evolution of the spatial distribution of the three species. $e_1=1.0, e_2=10.0$}
\end{figure}

\clearpage
Fifth case, Let  $ e_1=10.0,\, e_2=1.0$ This is the smallest defensive capacity considered in this section. 
\begin{figure}[hbt]
\begin{tabular}{ccc}
\includegraphics[scale=0.125]{./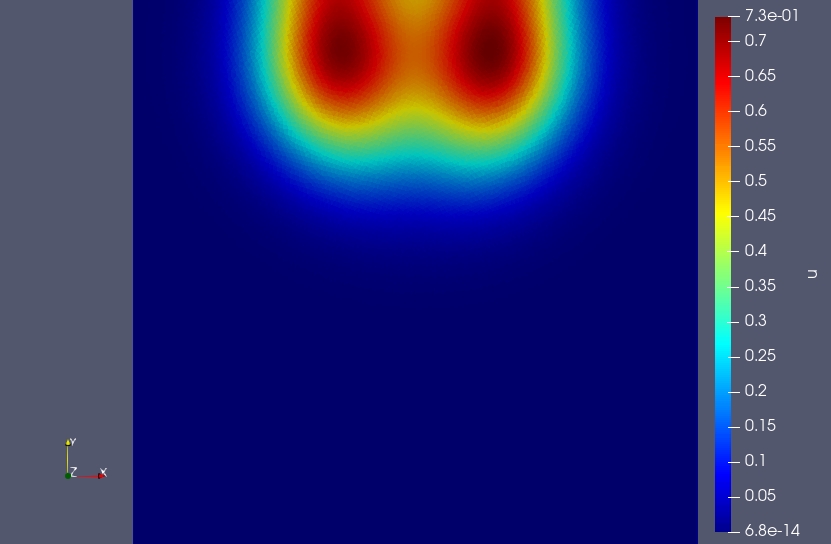} &
\includegraphics[scale=0.125]{./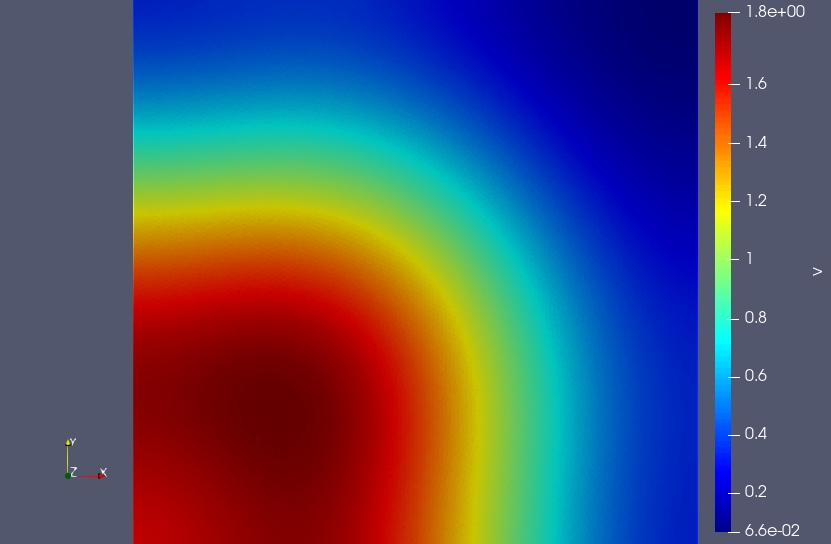} &
\includegraphics[scale=0.125]{./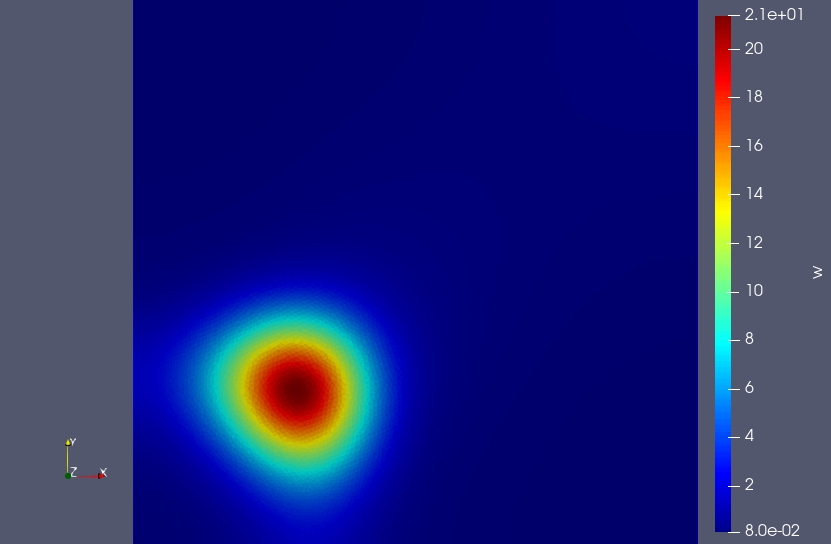} \\
($a_2$) $u,\quad t=0.1$ & ($b_2$) $v,\quad t=0.1$ & ($c_2$) $w,\quad t=0.1$  \\
\includegraphics[scale=0.125]{./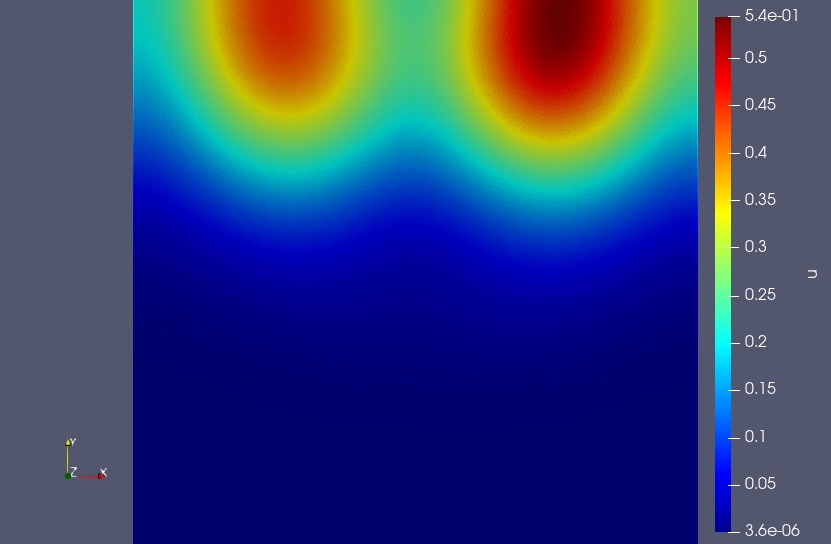} &
\includegraphics[scale=0.125]{./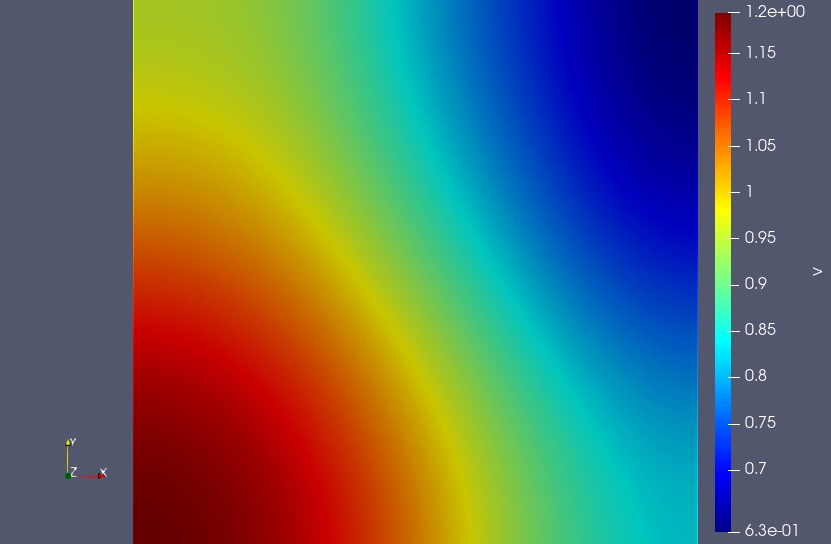} & 
\includegraphics[scale=0.125]{./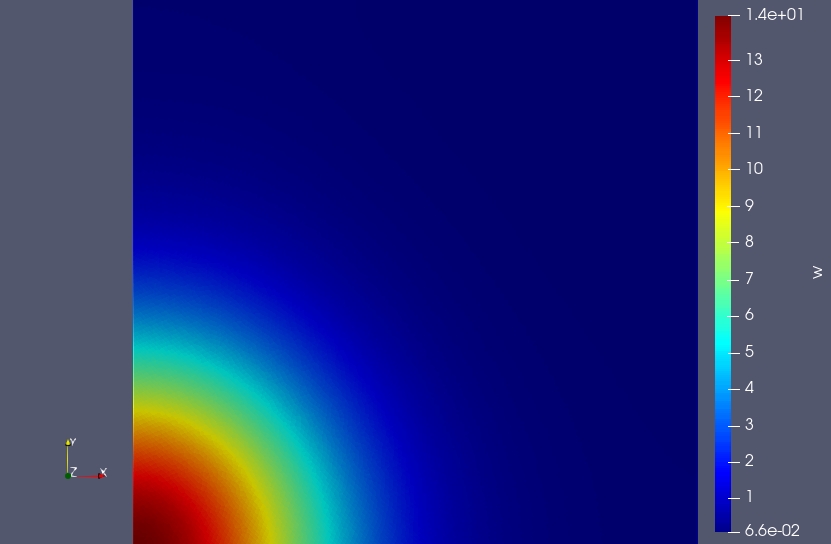} \\
($a_{3}$) $u,\quad t=0.5$ & ($b_{3}$) $v,\quad t=0.5$ & ($c_{3}$) $w,\quad t=0.5$ \\

\includegraphics[scale=0.125]{./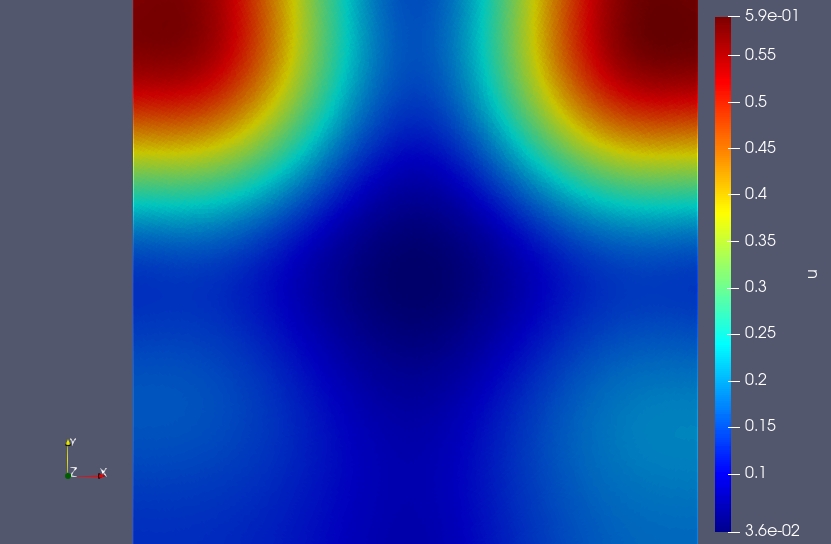} &
\includegraphics[scale=0.125]{./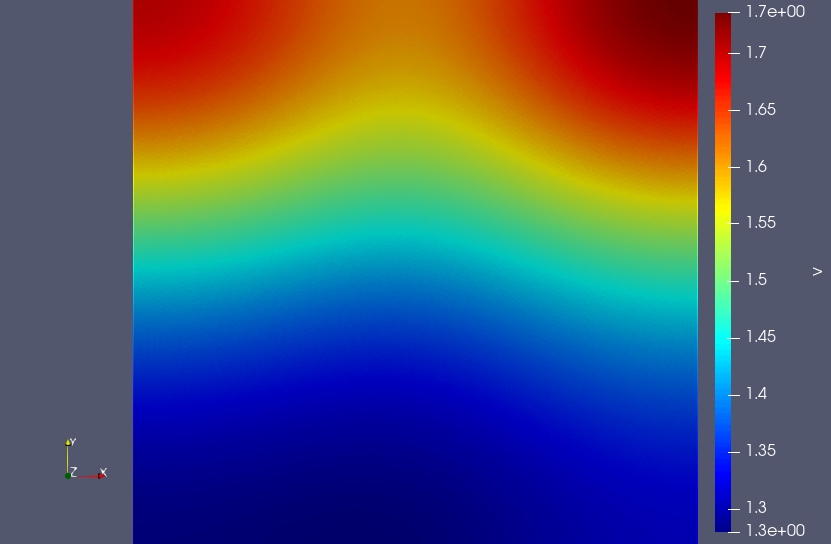} & 
\includegraphics[scale=0.125]{./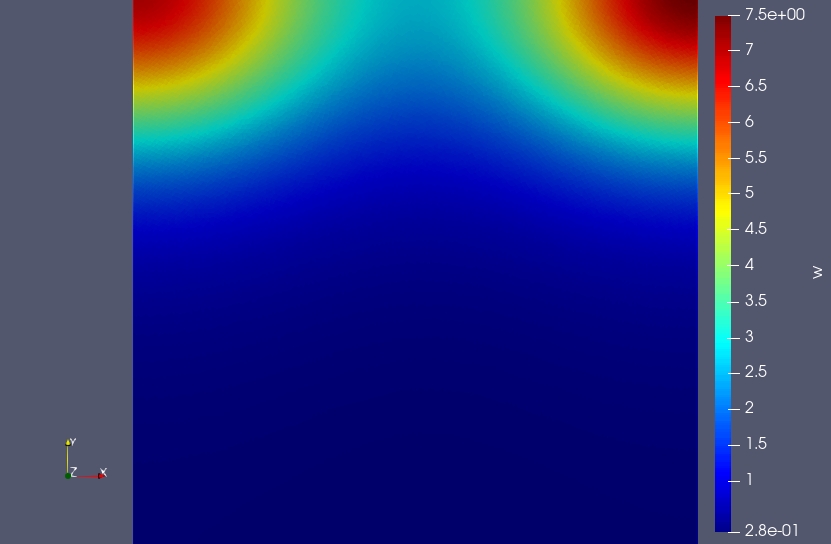} \\
($a_4$) $u,\quad t=2.0$ & ($b_4$) $v,\quad t=2.0$ & ($c_4$) $w,\quad t=2.0$ \\

\includegraphics[scale=0.125]{./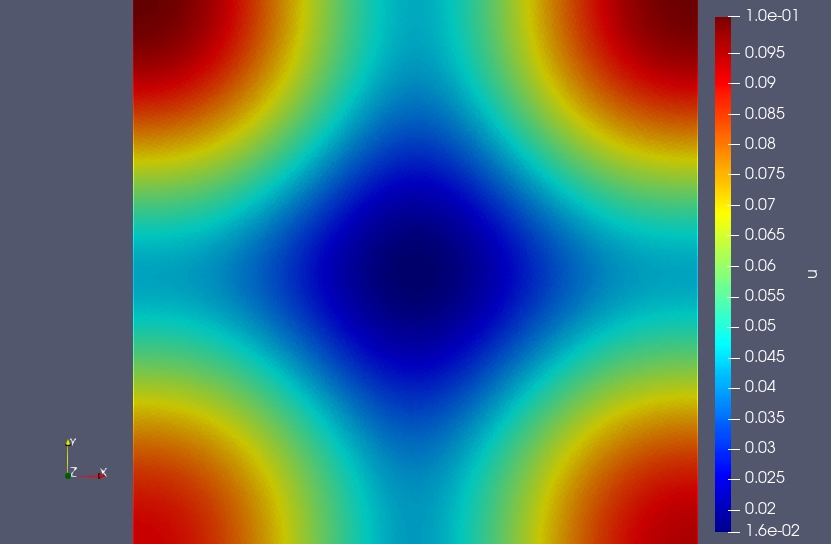} &
\includegraphics[scale=0.125]{./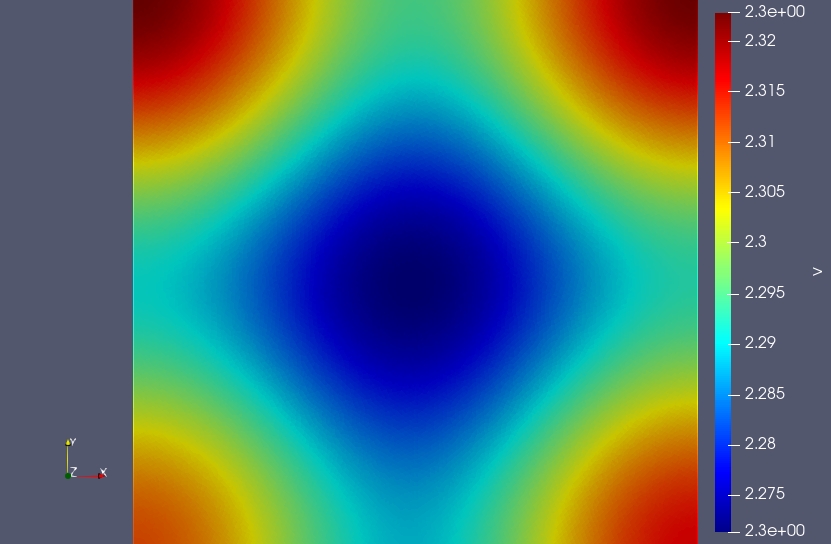} & 
\includegraphics[scale=0.125]{./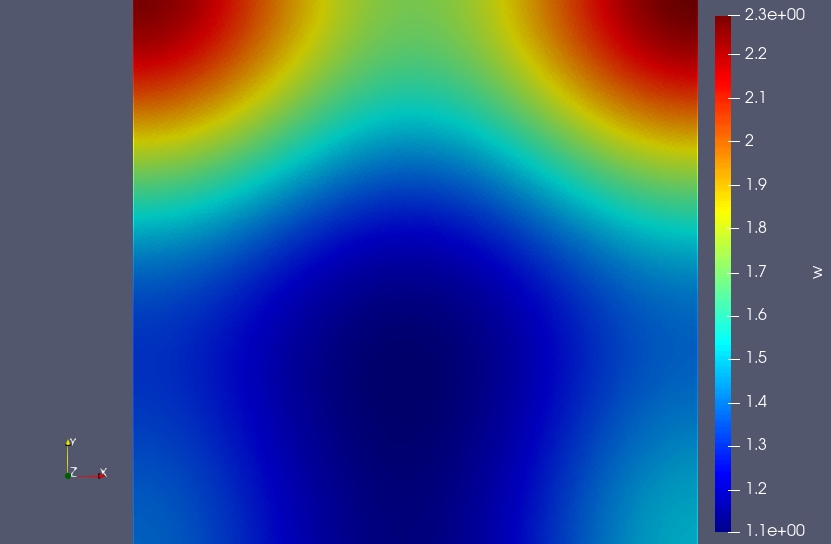} \\
($a_{5}$) $u,\quad t=4.0$ & ($b_{5}$) $v,\quad t=4.0$ & ($c_{5}$) $w,\quad t=4.0$ \\
\includegraphics[scale=0.125]{./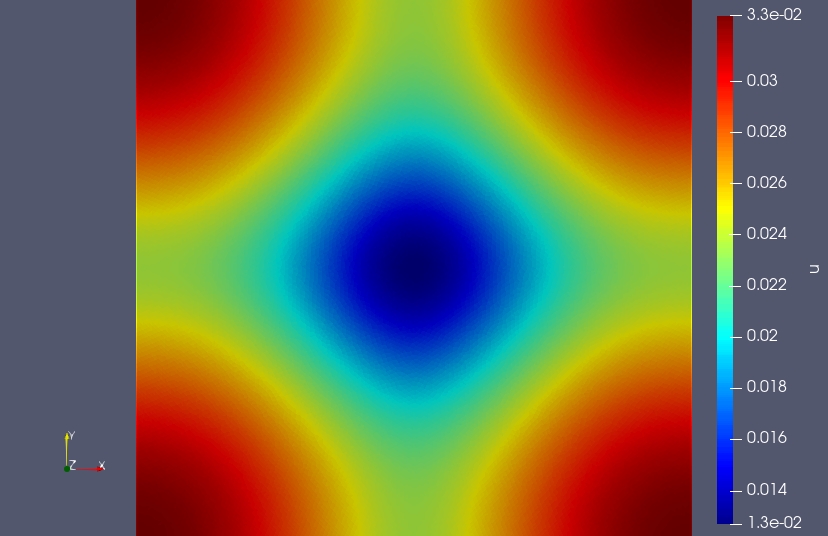} &
\includegraphics[scale=0.125]{./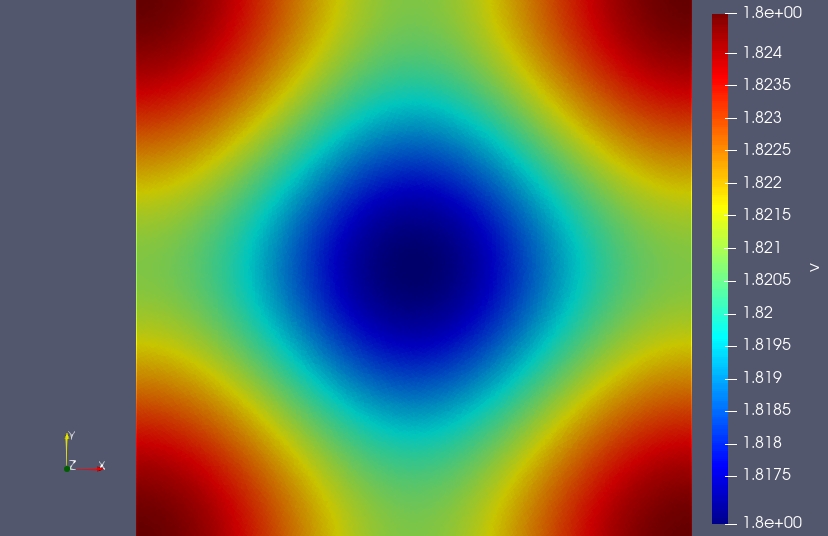} & 
\includegraphics[scale=0.125]{./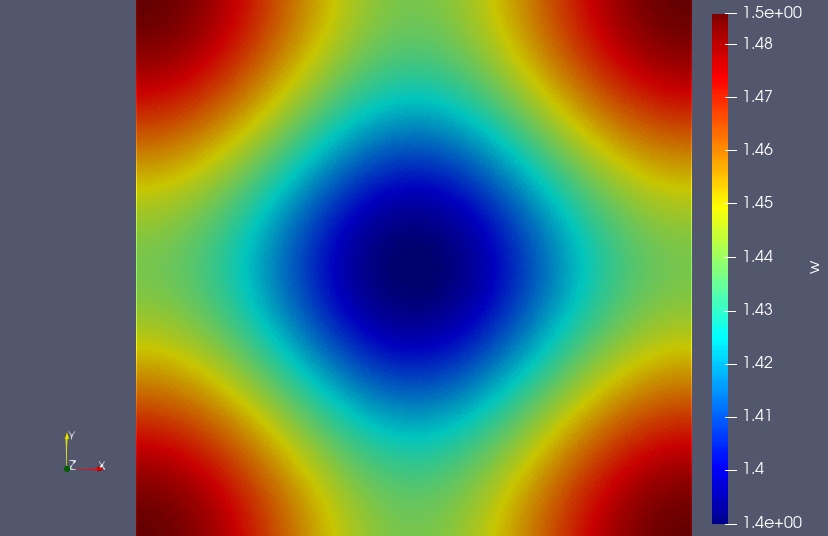} \\
($a_{6}$) $u,\quad t=20.0$ & ($b_{6}$) $v,\quad t=20.0$ & ($c_{6}$) $w,\quad t=20.0$
\end{tabular}
\caption{Evolution of the spatial distribution of the three species. $e_1=10.0, e_2=1.0$}
\label{ff4}
\end{figure}

\clearpage
From the comparison of Figures \ref{Figu3b}-\ref{ff4}, we conclude that defensive capacity has a negligible effect on the prey population, if the predation rate is not large enough. Indeed, the main impact is over the spatial distribution of both the meso predator and  the top predator.

%
\subsubsection{Habitat suitability and species distribution}
To understand how the ecological landscape impact species distribution, we consider two different characterization of the carrying capacity. In either case, the values of parameters of $\chi_1$ are $ e_1=1.0,\, e_2=10.0,$ and the initial condition of $u$ is
$$
u_0(x,y) =  2\exp(-10(x^2+ y^2)) (1-x^2)^2(1 - y^2)^2
$$
The initial conditions $v_0(x,y)$ and $w_0(x,y)$ are the same as above.
\\First, we consider a carrying capacity given by 

\begin{eqnarray*}
K(x,y) &=&  2\exp(-5((x+.75)^2+(y-.75)^2))+2\exp(-5((x-.75)^2+(y+.75)^2)),   \\
          &  &  +2 \exp(-5((x+.75)^2+(y+.75)^2))+2\exp(-5((x-.75)^2+(y-.75)^2)) . 
\end{eqnarray*}
The highest  suitability is reached at four symmetrical points respect to the origin. 

In Figure (\ref{Figu1}) are  shown plots of the numerical solutions of $u$, $v$ and $w$ at different times.  Note that as time passes, the resource tends to occupy the most suitable sites. The mesopredator moves towards the sites with the higher resource density and its defensive capacity ($e_2/e_1$) is large enough to keep the top predator away.



\begin{figure}[hbt] 
\begin{tabular}{ccc}
\includegraphics[scale=0.125]{./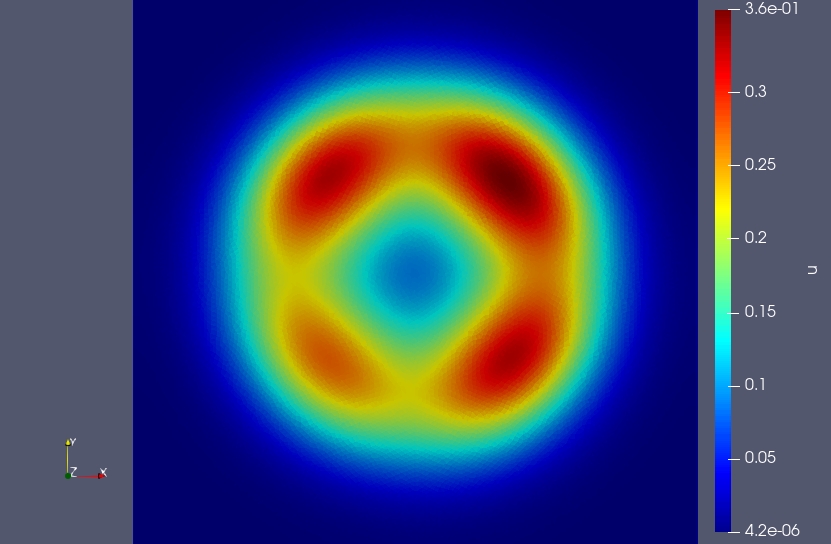} &
\includegraphics[scale=0.125]{./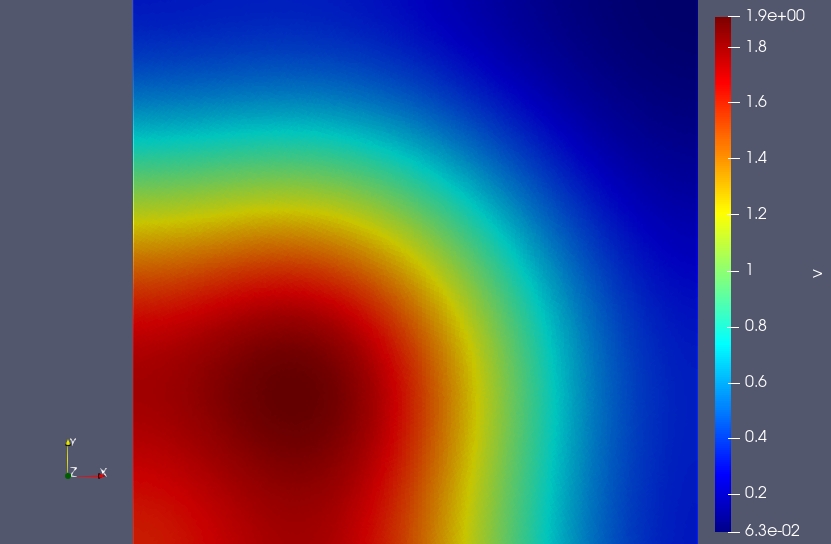} &
\includegraphics[scale=0.125]{./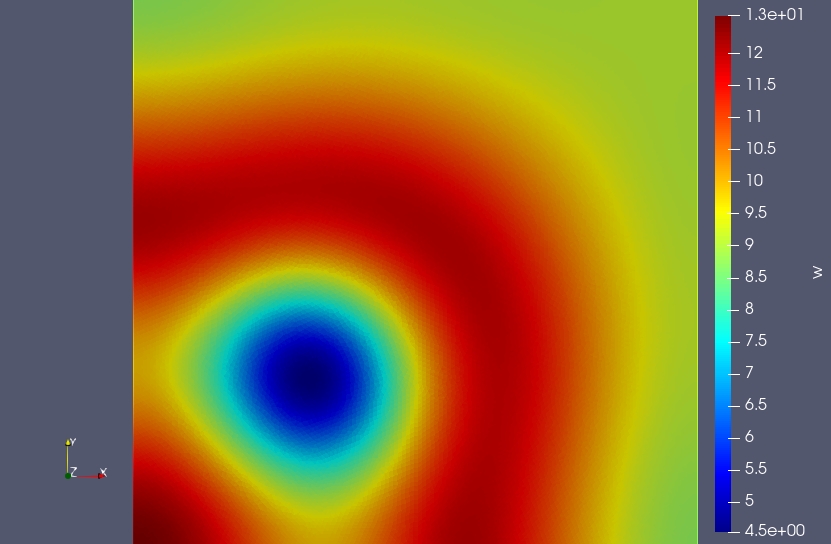} \\
($a_1$) $u,\quad t=0.1$ & ($b_1$) $v,\quad t=0.1$ & ($c_1$) $w,\quad t=0.1$  \\
\includegraphics[scale=0.15]{./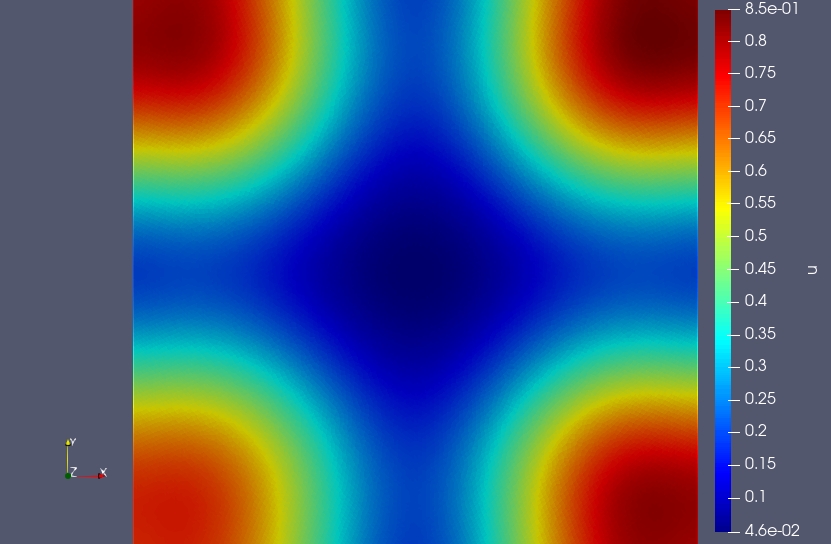} &
\includegraphics[scale=0.125]{./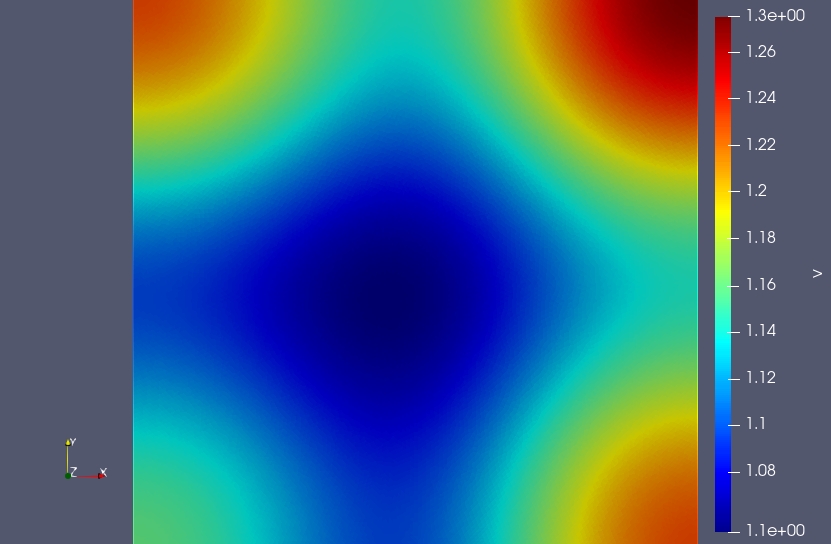} & 
\includegraphics[scale=0.125]{./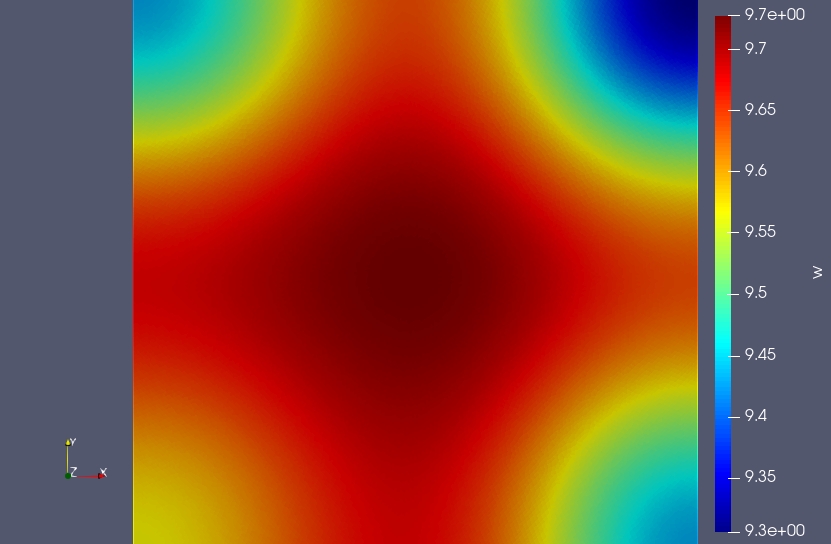} \\
($a_2$) $u,\quad t=2.0$ & ($b_2$) $v,\quad t=2.0$ & ($c_2$) $w,\quad t=2.0$\\
\includegraphics[scale=0.125]{./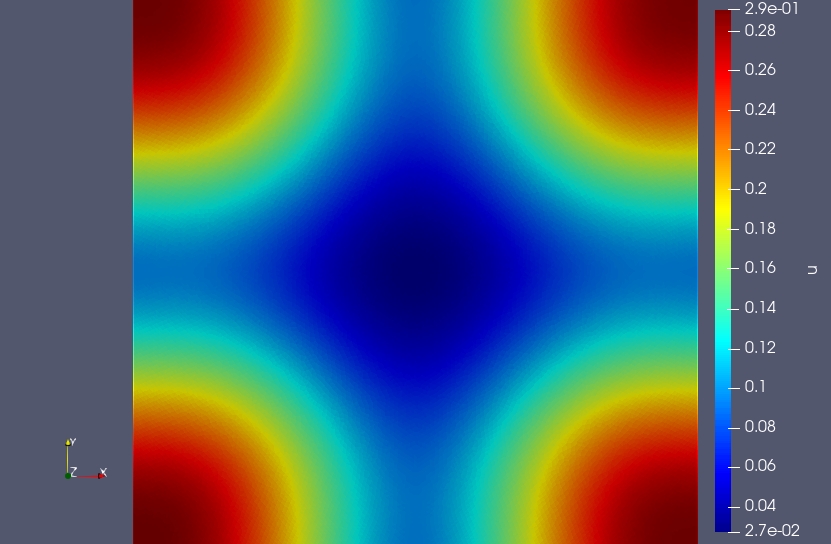} &
\includegraphics[scale=0.125]{./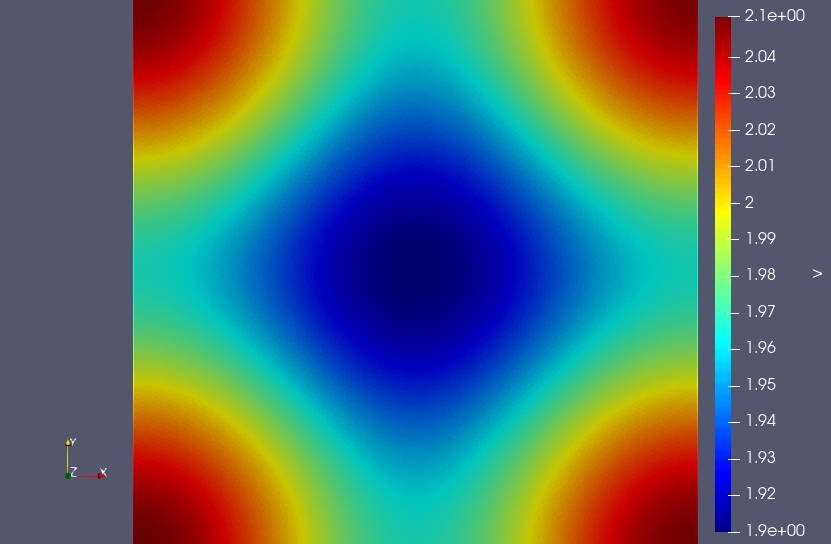} & 
\includegraphics[scale=0.125]{./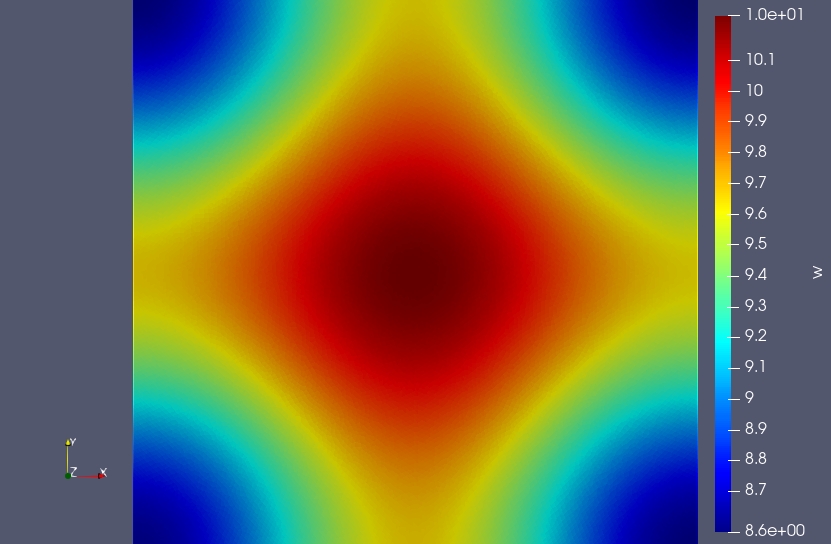} \\
($a_{3}$) $u,\quad t=4.0$ & ($b_{3}$) $v,\quad t=4.0$ & ($c_{3}$) $w,\quad t=4.0$ \\
 \includegraphics[scale=0.125]{./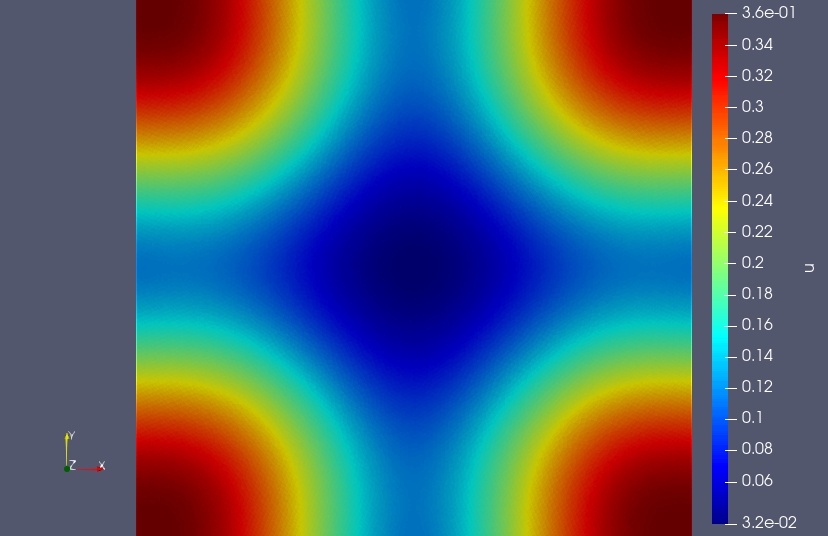} &
\includegraphics[scale=0.125]{./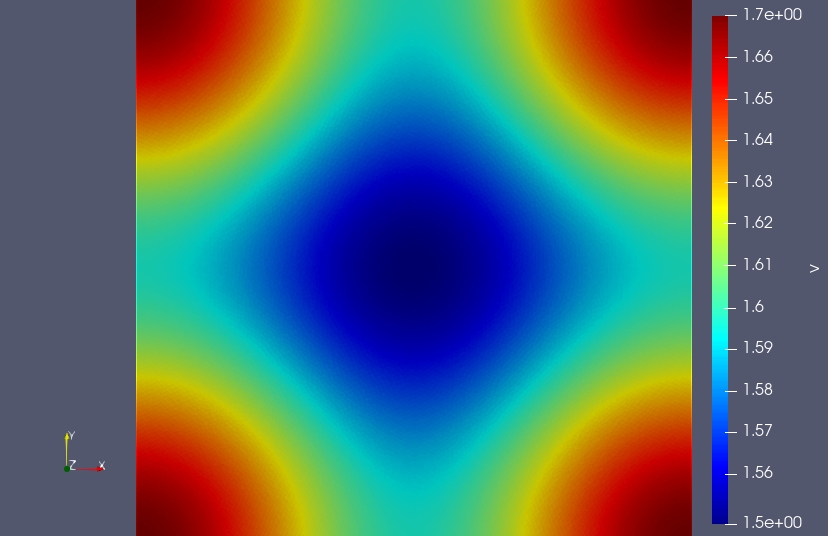} & 
\includegraphics[scale=0.125]{./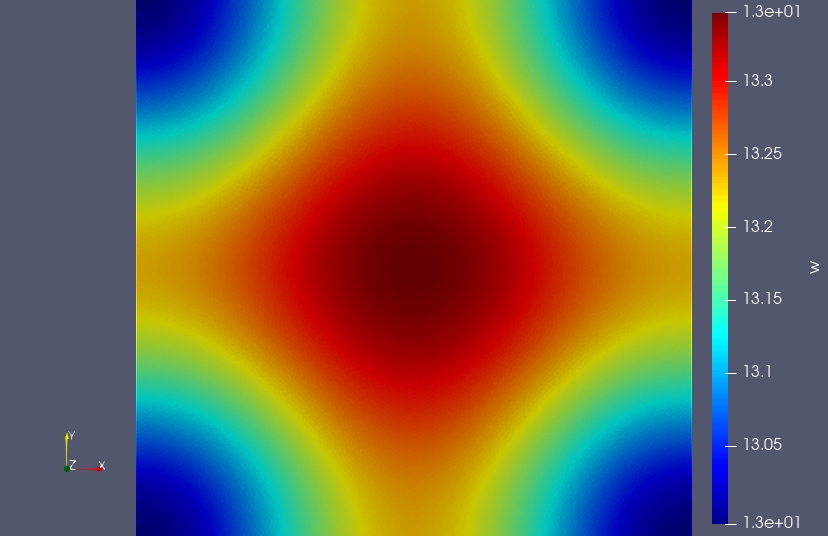} \\
($a_{4}$) $u,\quad t=20.0$ & ($b_{4}$) $v,\quad t=20.0$ & ($c_{4}$) $w,\quad t=20.0$
\end{tabular}
\caption{Evolution of the spatial distribution of the three species.}  \label{Figu1}
\end{figure}

\newpage
\clearpage

In this second case, the habitat of the resource is richer since its suitability is given by
\begin{eqnarray} \label{Kuno}
K(x,y) &=&  2\exp(-5((x+.75)^2+(y-.75)^2))+2\exp(-5((x-.75)^2+(y+.75)^2)),  \nonumber \\
          &  &  +2 \exp(-5((x+.75)^2+(y+.75)^2))+2\exp(-5((x-.75)^2+(y-.75)^2)) \\
          & & + 2\exp(-5(x^2+y^2)) \nonumber
\end{eqnarray}
The highest  suitability is reached at four symmetrical points respect to the origin and at the origin. 
The spatial distribution of the three species is shown in Figure \ref{Figu4}.  \\
As in the first case, the mesopredators move towards the sites of higher density of the resource and the top predator is located far enough away from its prey because $e_2/e_1$ is relatively high. It seems that the richness of the habitat does not induce any change in the distribution patterns. Top predator tends to occupy the areas less densely populated by mesopredators, if $e_2/e_1$ is high enough.

\begin{figure}[hbt] 
\begin{tabular}{ccc}
\includegraphics[scale=0.125]{./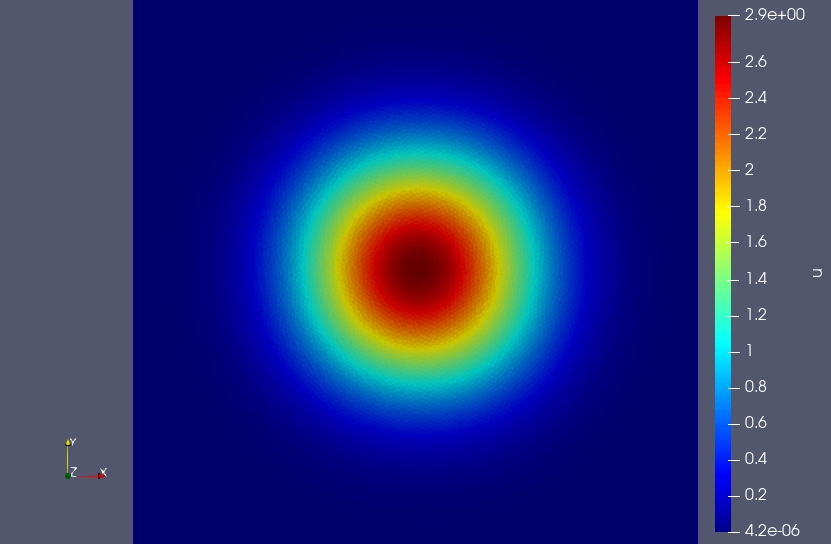} &
\includegraphics[scale=0.125]{./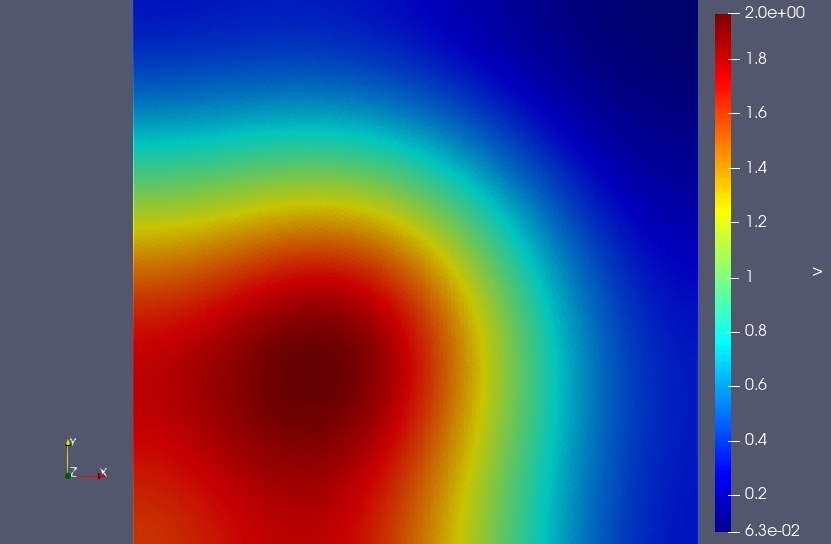} &
\includegraphics[scale=0.125]{./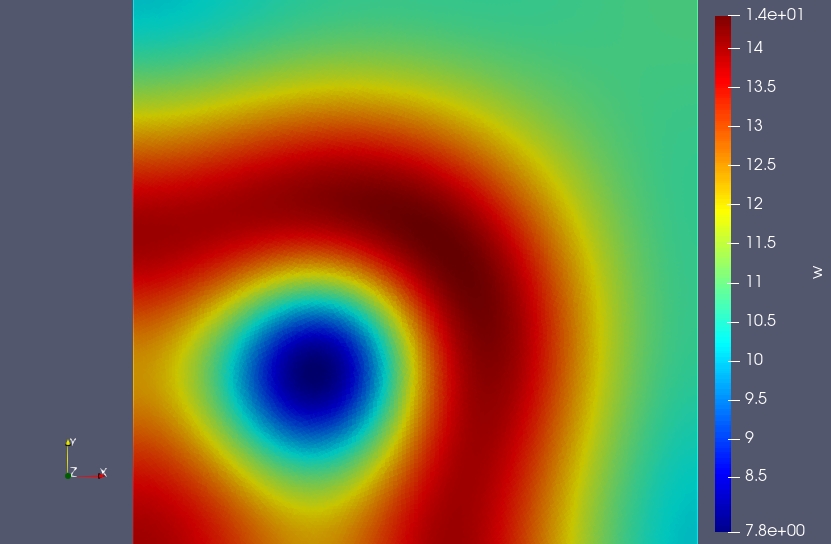} \\
($a_{1}$) $u,\quad t=0.1$ & ($b_{1}$) $v,\quad t=0.1$ & ($c_{1}$) $w,\quad t=0.1$   \\
\includegraphics[scale=0.125]{./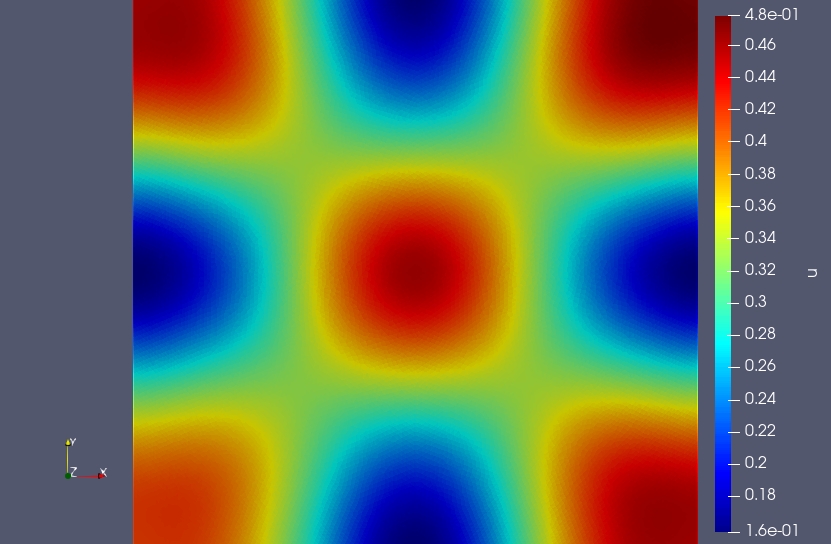} &
\includegraphics[scale=0.125]{./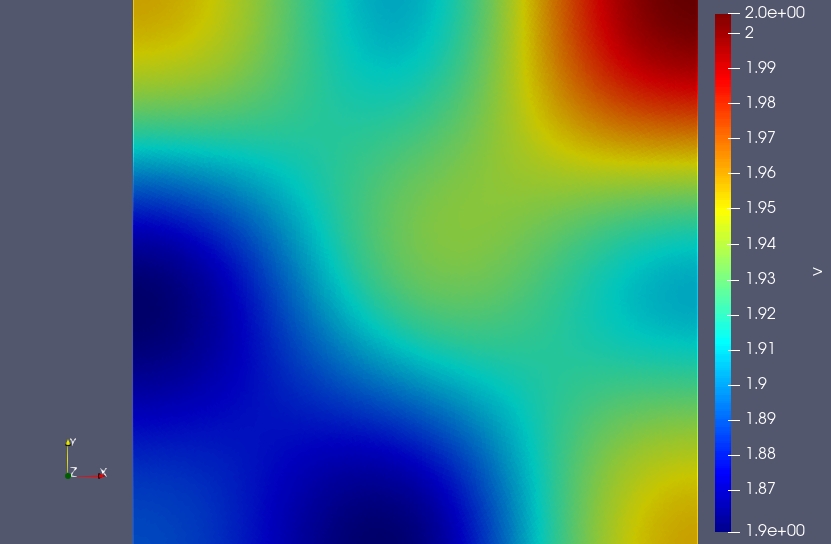} & 
\includegraphics[scale=0.125]{./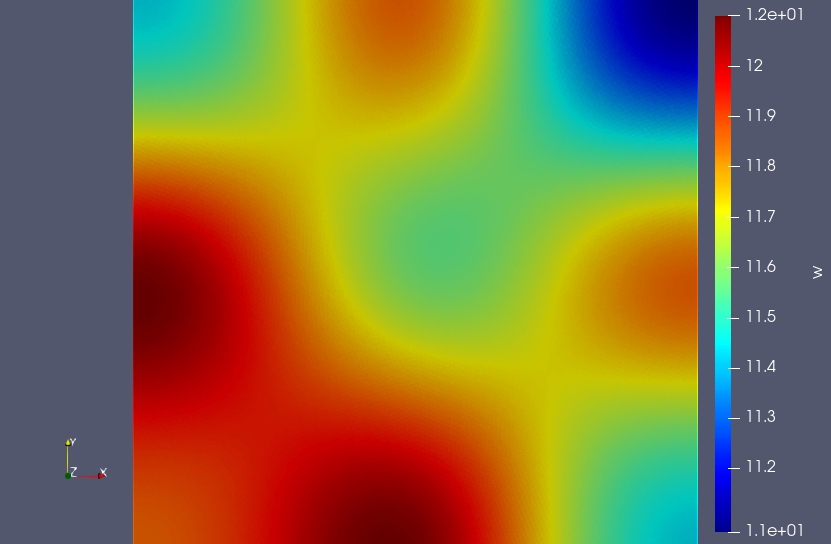} \\
($a_{2}$) $u,\quad t=2.0$ & ($b_{2}$) $v,\quad t=2.0$ & ($c_{2}$) $w,\quad t=2.0$  \\
\includegraphics[scale=0.125]{./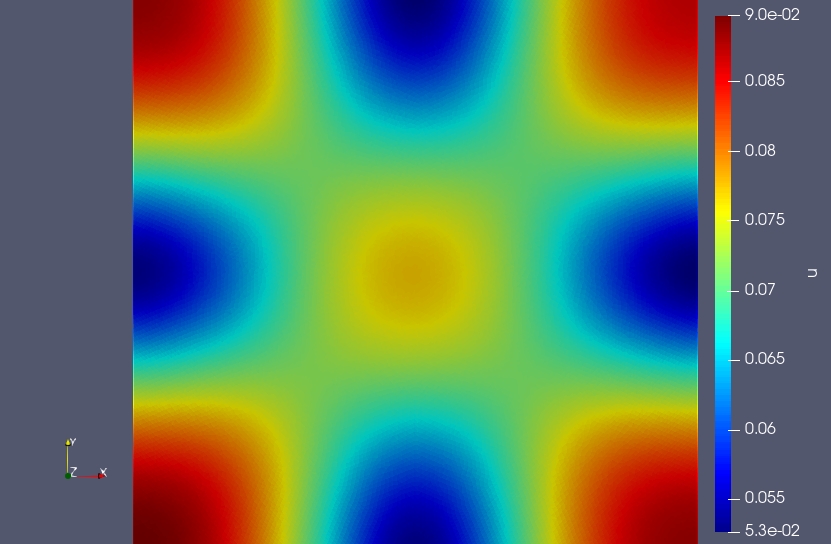} &
\includegraphics[scale=0.125]{./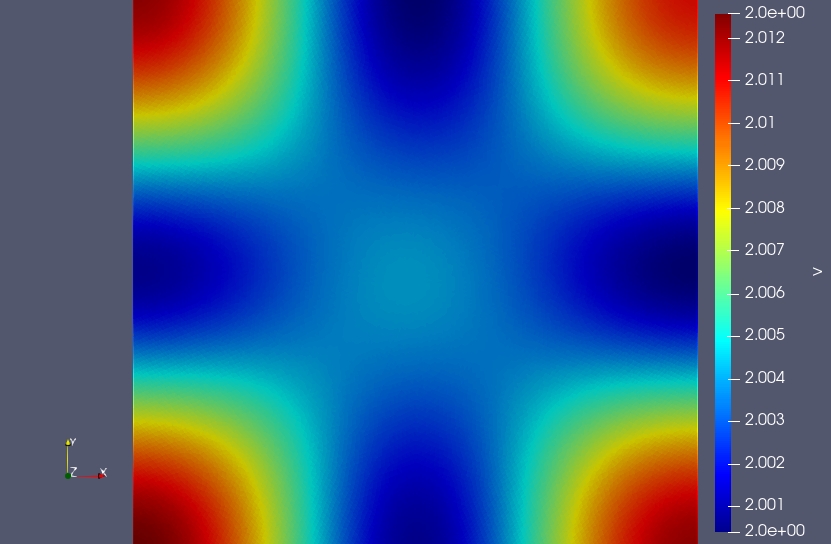} & 
\includegraphics[scale=0.125]{./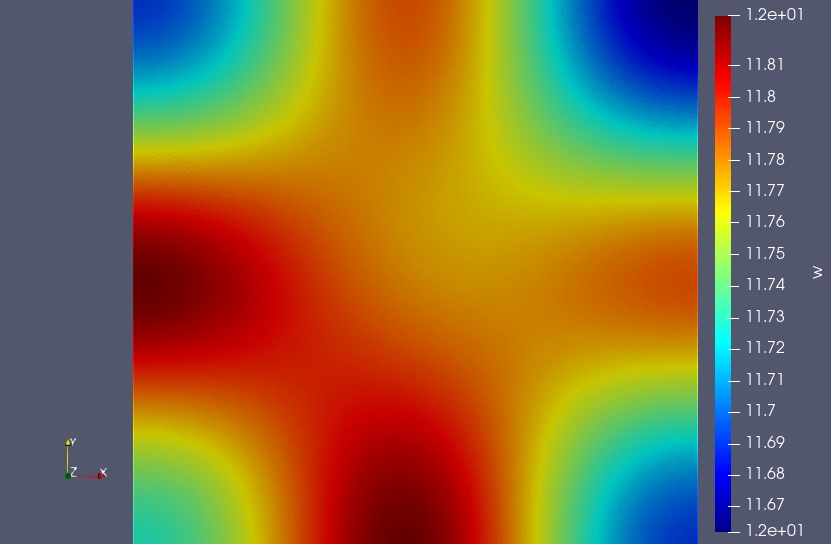} \\
($a_{3}$) $u,\quad t=4.0$ & ($b_{3}$) $v,\quad t=4.0$ & ($c_{3}$) $w,\quad t=4.0$  \\
\includegraphics[scale=0.125]{./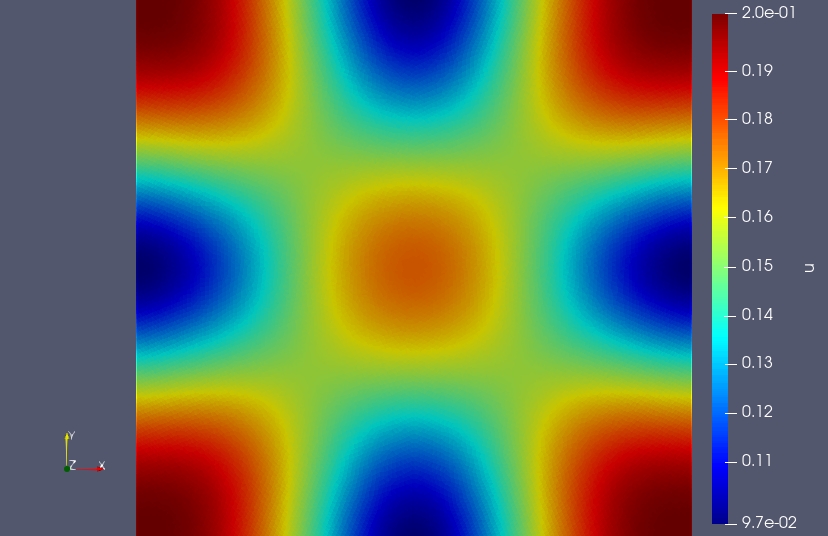} &
\includegraphics[scale=0.125]{./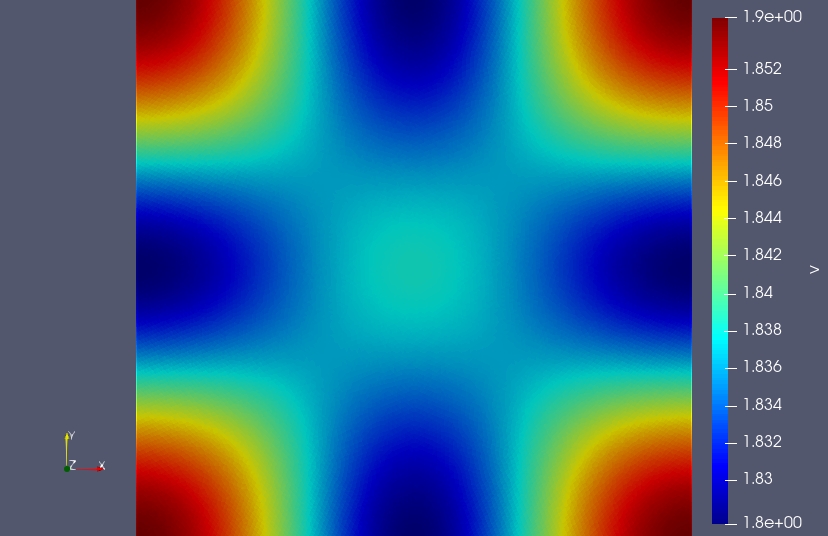} & 
\includegraphics[scale=0.125]{./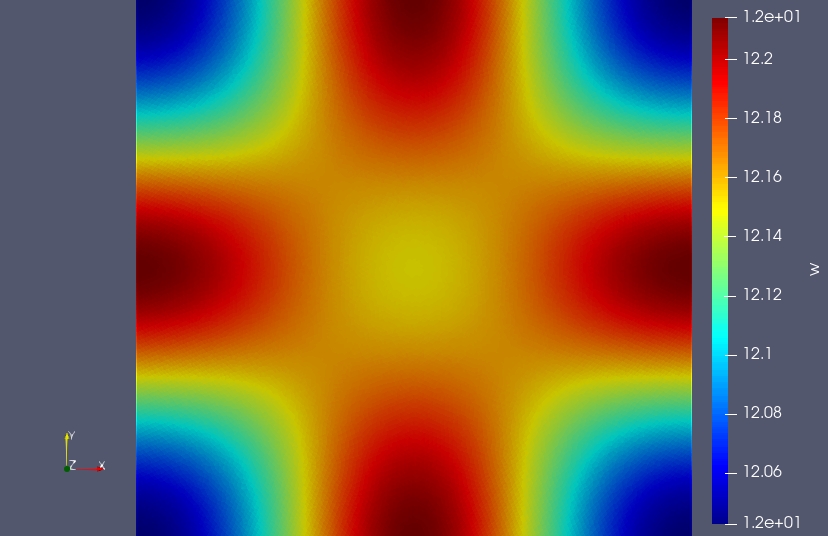} \\
($a_{4}$) $u,\quad t=20.0$ & ($b_{4}$) $v,\quad t=20.0$ & ($c_{4}$) $w,\quad t=20.0$  
\end{tabular}
\caption{Evolution of the spatial distribution of the three species. The suitability of resource habitat is given by (\ref{Kuno})}. \label{Figu4}
\end{figure}
\subsection{Resource defense and species distribution}
Some species defend themselves by attracting predators from their natural enemies. This is very frequent for instance in plant species, see \cite{pri} and the bibliography cited there. In \cite{alj} it has been described 24 species of predators  which are attracted by volatiles generated by plants damaged by herbivores. In this paper, the authors  raise the question about the effectiveness of predator species in controlling specific insect pests.
In the following we analyze numerically the impact on the mesopredator distribution of an increasing predation rate of the top predator when this is attracted by the resource species. To analyze the relationship between the distribution of the mesopredator and the predation rate of a top predator that is attracted to the resource, we use Model (\ref{mod2f}) which is shown below.
\begin{eqnarray} 
\frac{\partial u}{\partial t} & =&  d_0\Delta u +  \alpha u  ( 1 - \frac{u }{K(x,y) })- \frac{b u v }{u + a} ,\nonumber \\
\frac{\partial v}{\partial t} & = &  d_1\Delta v + \gamma \frac{b u v }{u + a} - \frac{c v w}{v+d} -\mu v,  \\
\frac{\partial w}{\partial t} &= &  d_2\Delta w +\beta \frac{c v w }{v + d}  -\nu w -  \div ( \chi_2(u,w) \grad u) .\nonumber
\end{eqnarray} 
The sensitivity function is $\chi_2(u,w)=q u w$. Thus, the movement of top predators towards the gradient of $u$ is faster the higher its own density or that of the resource. \\
Initial conditions for the spatial distribution of the resource, the meso-predator and top predator are considered as 
\begin{eqnarray*}
u_0(x,y) &= & 2\exp(-(x^2+ (y-.9)^2)(1-x^2)^2(1 - y^2)^2; \\
v_0(x,y) &= & 2\exp(-(x + .9)^2 - (y + .9)^2)(1 - x^2)^2(1 - y^2)^2;\\
w_0(x,y) &= & 1.5
\end{eqnarray*}
for all $x,y\in \Omega$. 
The suitability of the habitat of the resource is given by
\begin{eqnarray*}
K(x,y) &=&  2\exp(-5((x+.75)^2+(y-.75)^2))+2\exp(-5((x-.75)^2+(y+.75)^2)),   \\
          &  &  + 2\exp(-5((x+.75)^2+(y+.75)^2))+2\exp(-5((x-.75)^2+(y-.75)^2)) . 
\end{eqnarray*}
Let the parameter values be given by $\alpha=5, \, a= 2.0,\, b=5.0,\, d=2.0,\, \beta=1.0,\, \gamma=1.0,\, \mu=0.05,\, \nu=0.05,$  $d_0=0.1$, $d_1=1, d_2=1$. \\
The sensitivity function is $\chi_2(u,w)=q uw$. The below simulations are executed for different values of $q$ and $c.$

\begin{figure}[hbt]
\begin{tabular}{ccc}
\includegraphics[scale=0.1125]{./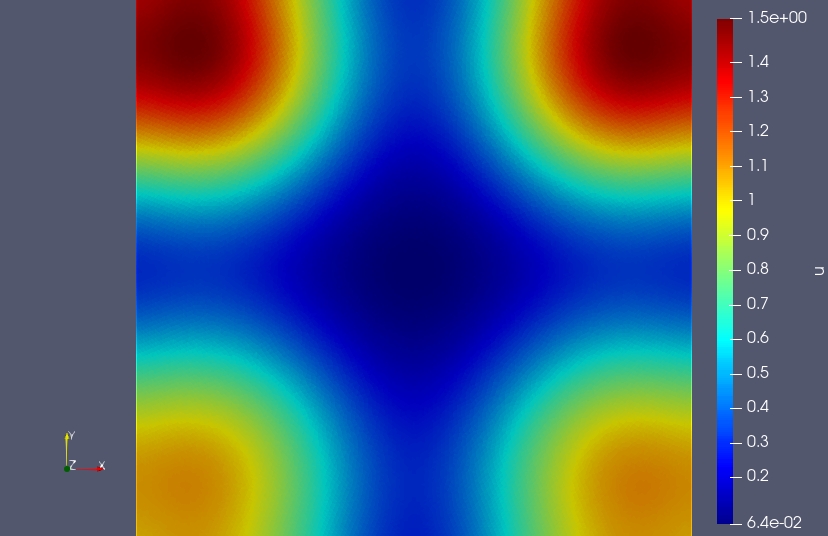} &
\includegraphics[scale=0.1125]{./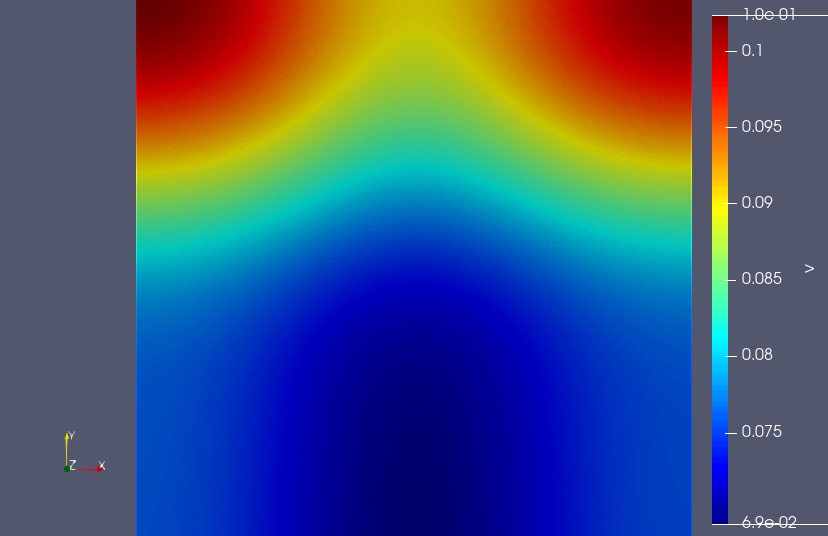} &
\includegraphics[scale=0.1125]{./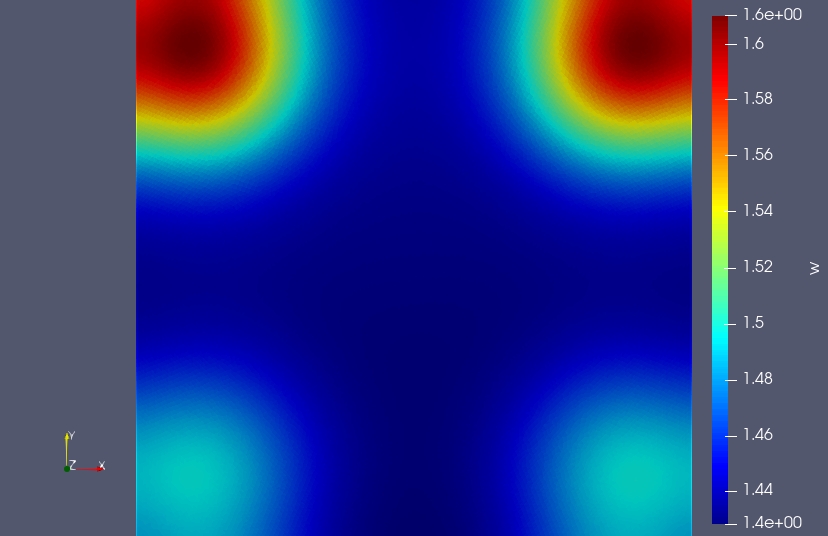} \\
($a_1$) $u,\quad t=2$ & ($b_1$) $v,\quad t=2$ & ($c_1$) $w,\quad t=2$  \\
\includegraphics[scale=0.1125]{./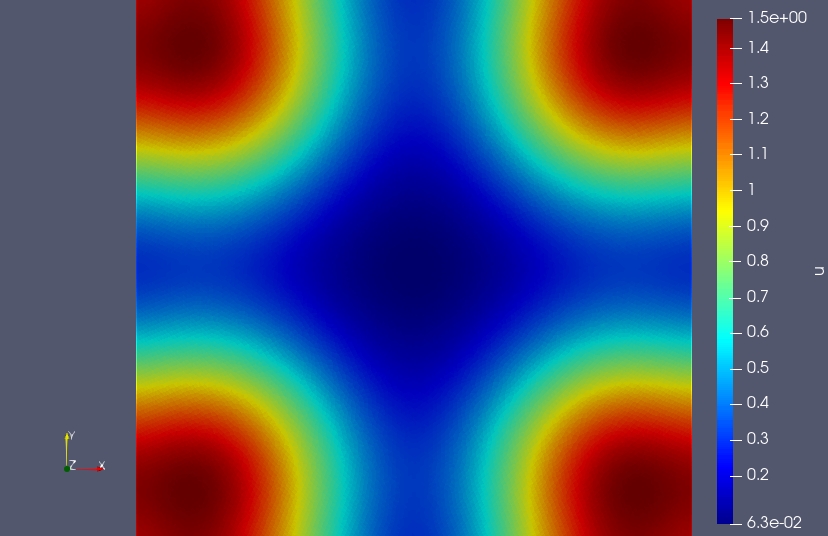} &
\includegraphics[scale=0.1125]{./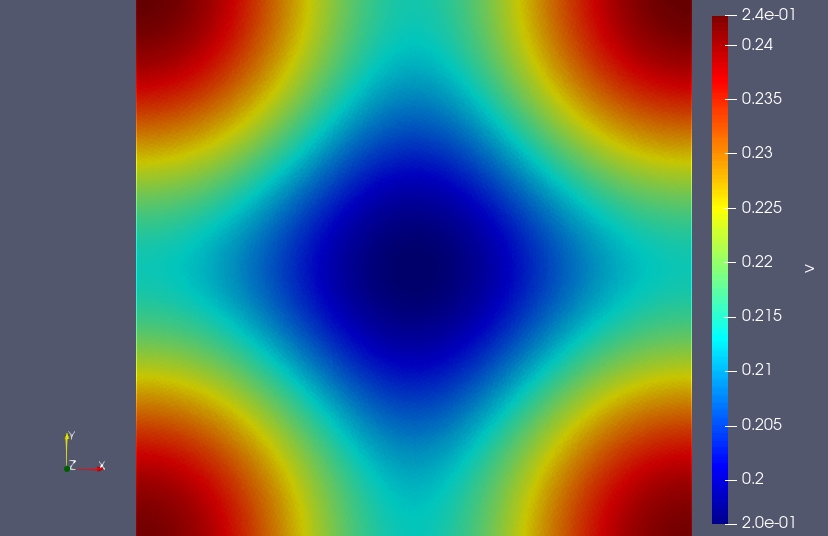} &
\includegraphics[scale=0.1125]{./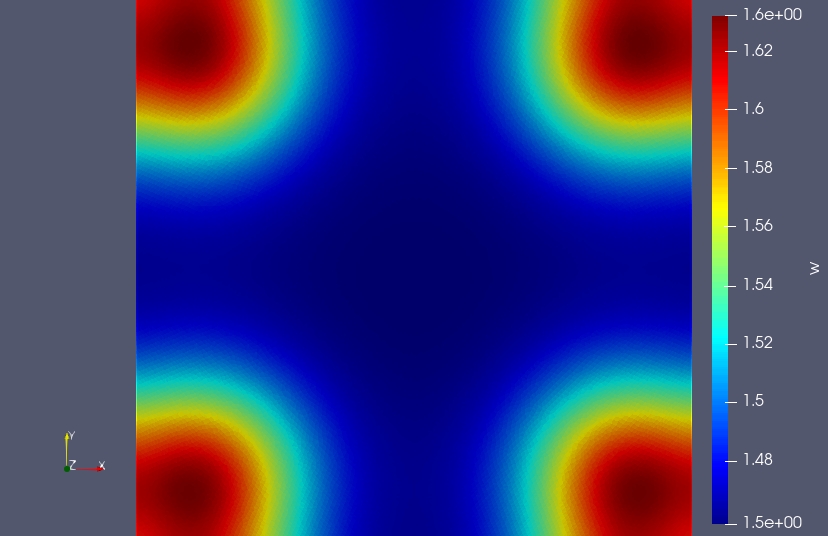} \\
($a_2$) $u,\quad t=4$ & ($b_2$) $v,\quad t=4$ & ($c_2$) $w,\quad t=4$ \\
\includegraphics[scale=0.1125]{./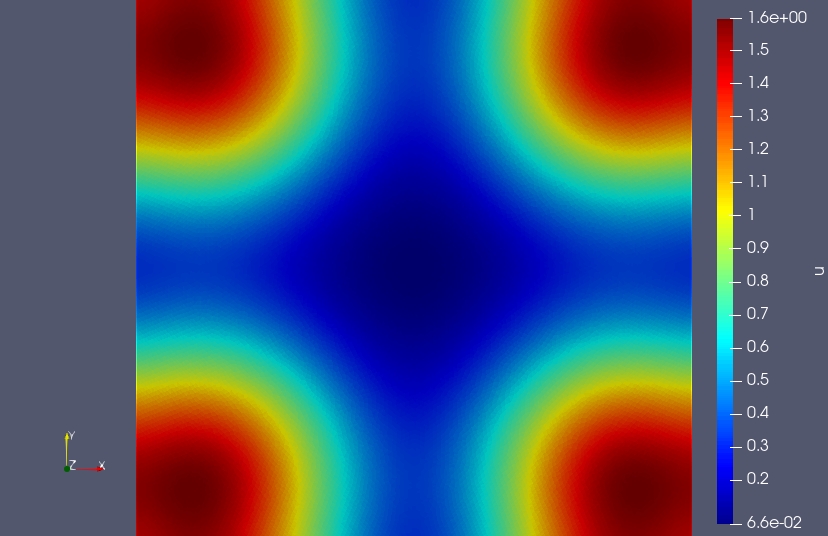} &
\includegraphics[scale=0.1125]{./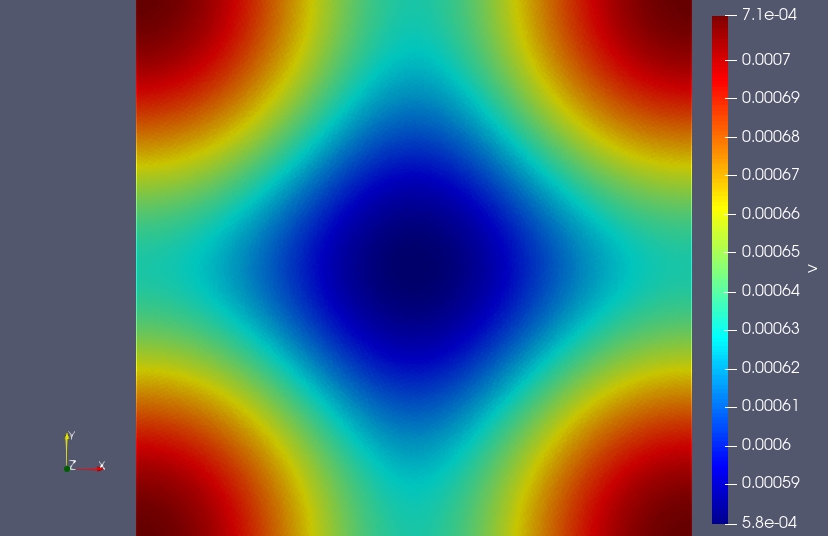} & 
\includegraphics[scale=0.1125]{./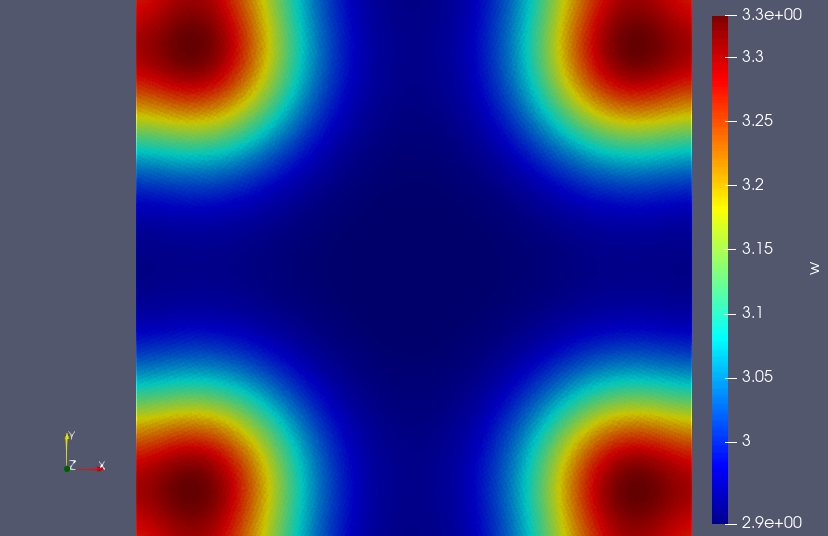} \\
($a_{3}$) $u,\quad t=20$ & ($b_{3}$) $v,\quad t=20$ & ($c_{3}$) $w,\quad t=20$ 
\end{tabular}
\caption{{\em Contour plots} of time evolution of the resource $u$,  mesopredador $v$ and top predador $w$ at different times. $ q=0.1$, $c=1.0$} \label{Figu9}
\end{figure}

\begin{figure}[hbt]
\begin{tabular}{ccc}
\includegraphics[scale=0.1125]{./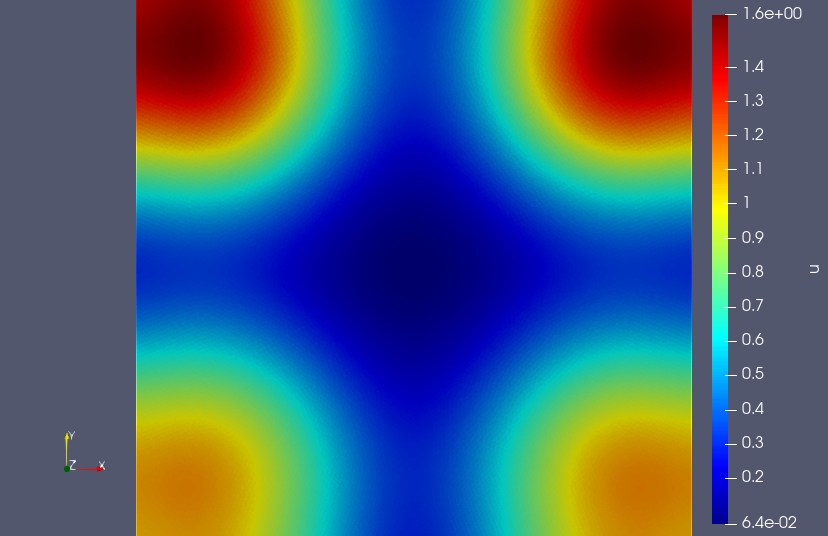} &
\includegraphics[scale=0.1125]{./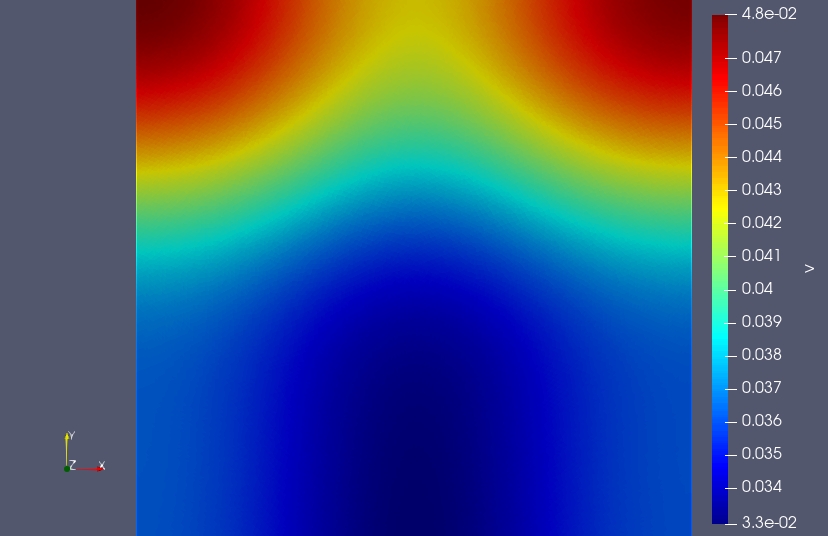} &
\includegraphics[scale=0.1125]{./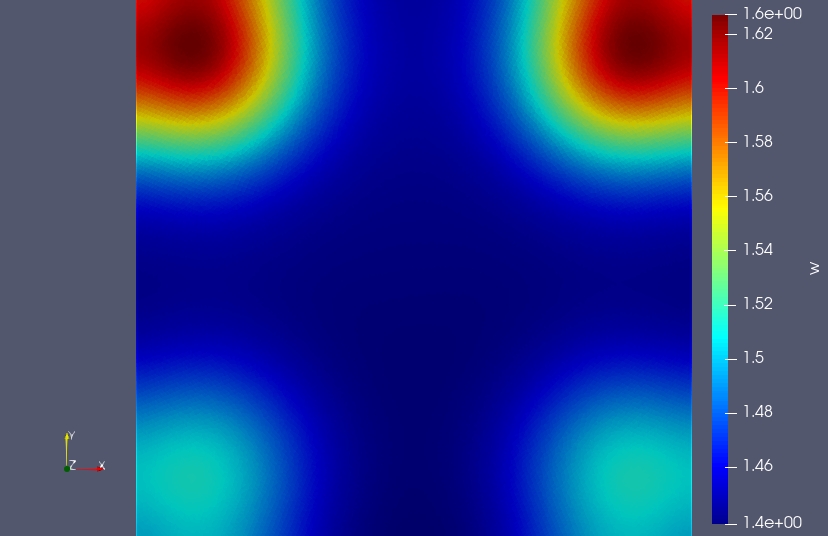} \\
($a_1$) $u,\quad t=2$ & ($b_1$) $v,\quad t=2$ & ($c_1$) $w,\quad t=2$  \\
\includegraphics[scale=0.1125]{./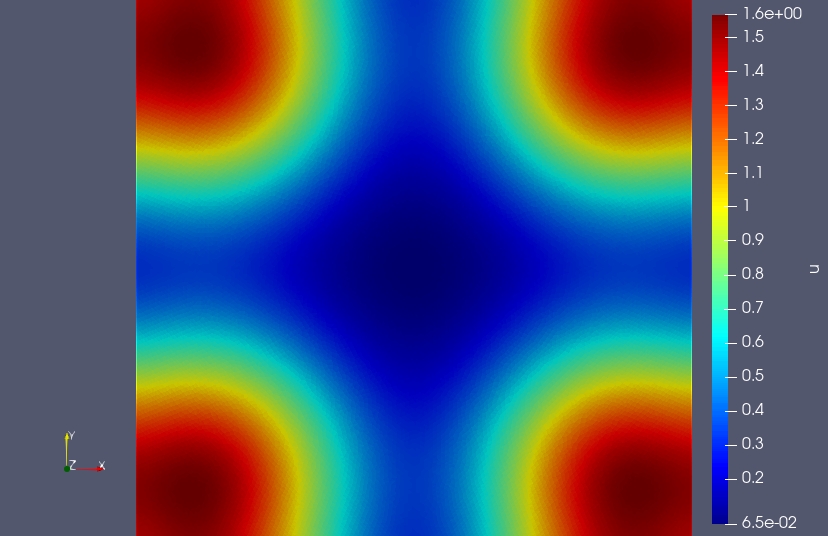} &
\includegraphics[scale=0.1125]{./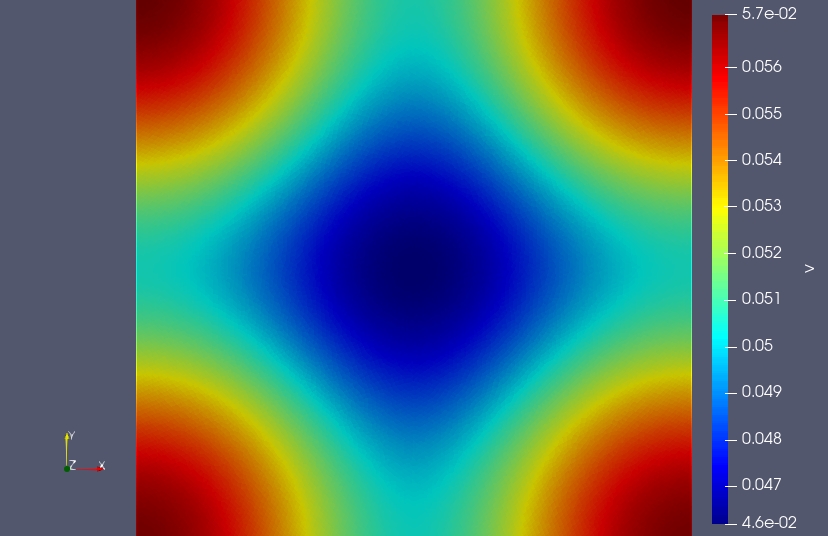} &
\includegraphics[scale=0.1125]{./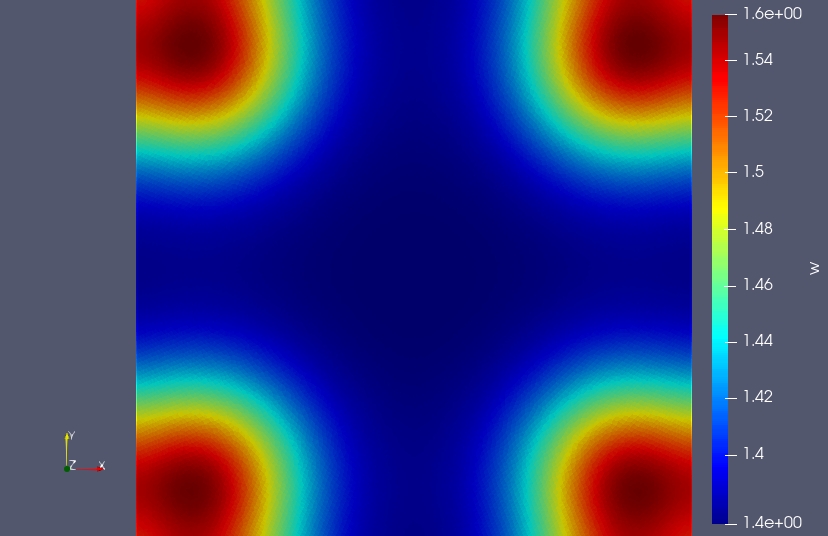} \\
($a_2$) $u,\quad t=4$ & ($b_2$) $v,\quad t=4$ & ($c_2$) $w,\quad t=4$ \\
\includegraphics[scale=0.1125]{./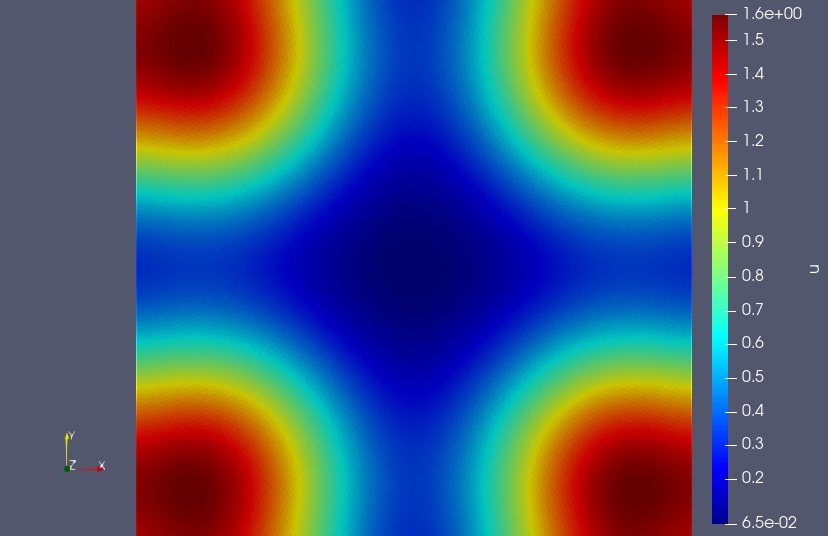} &
\includegraphics[scale=0.1125]{./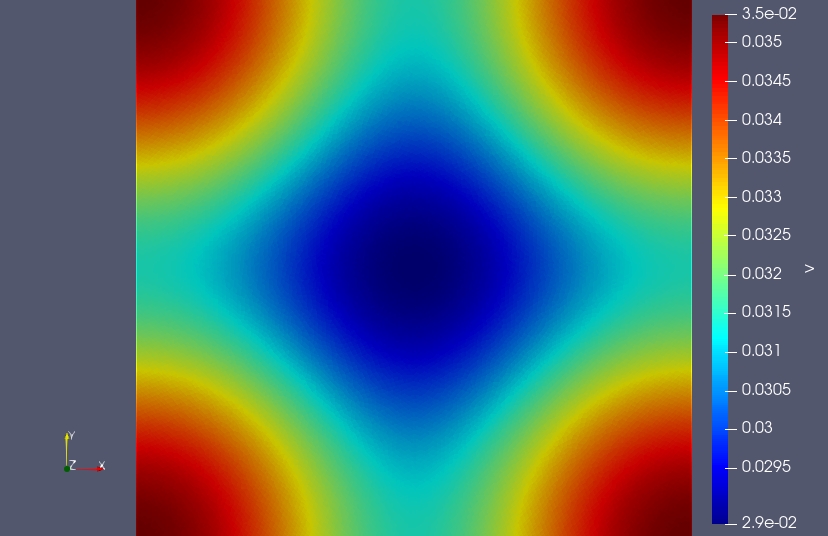} & 
\includegraphics[scale=0.1125]{./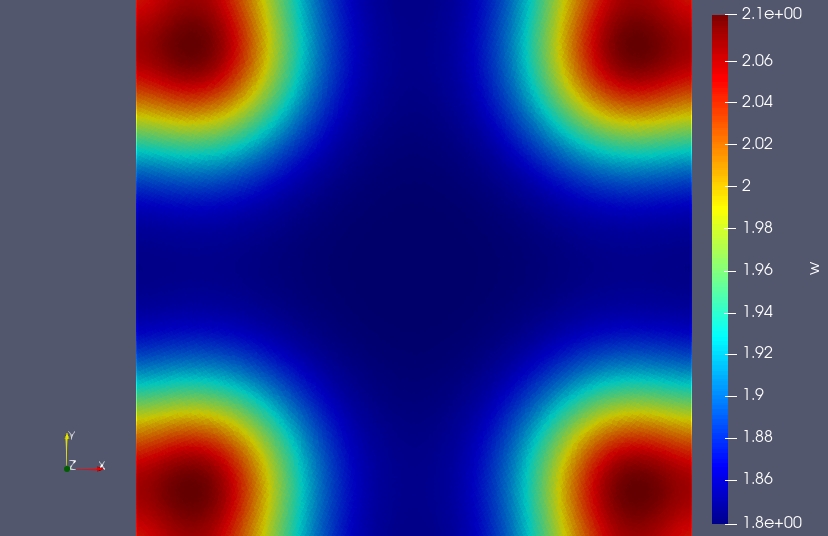} \\
($a_3$) $u,\quad t=20$ & ($b_3$) $v,\quad t=20$ & ($c_3$) $w,\quad t=20$  
\end{tabular}
\caption{{\em Contour plots} of time evolution of the resource $u$,  mesopredador $v$ and top predador $w$ at different times. $ q=0.1$, $c=1.5$}  \label{Figu10}
\end{figure}

\begin{figure}[hbt]
\begin{tabular}{ccc}
\includegraphics[scale=0.1125]{./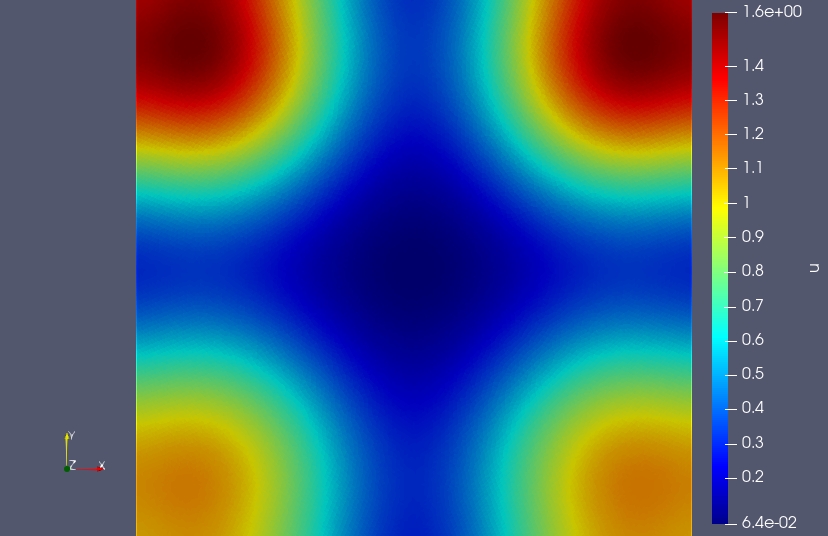} &
\includegraphics[scale=0.1125]{./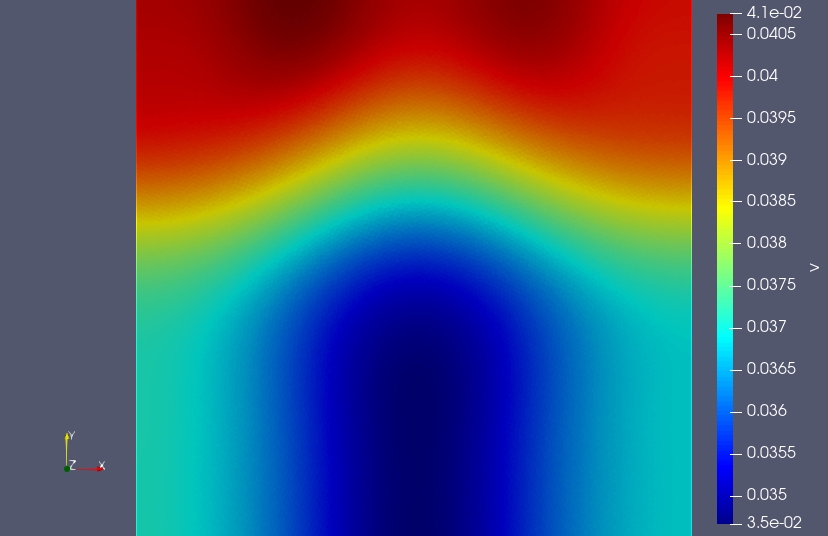} & 
\includegraphics[scale=0.1125]{./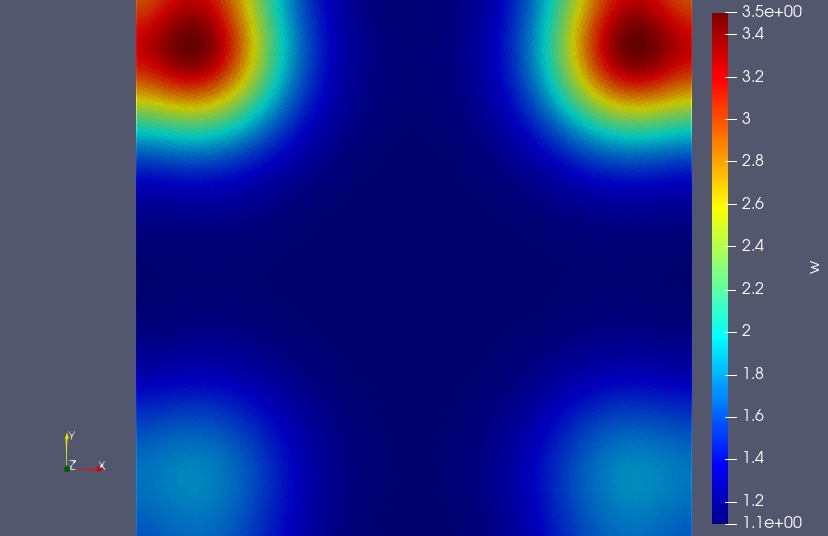} \\
($a_1$) $u,\quad t=2.0$ & ($b_1$) $v,\quad t=2.0$ & ($c_1$) $w,\quad t=2.0$ \\
\includegraphics[scale=0.1125]{./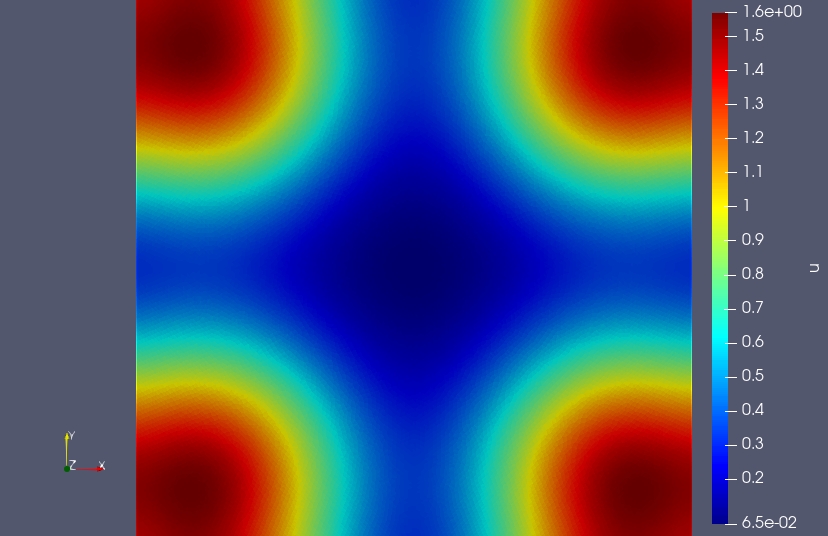} &
\includegraphics[scale=0.1125]{./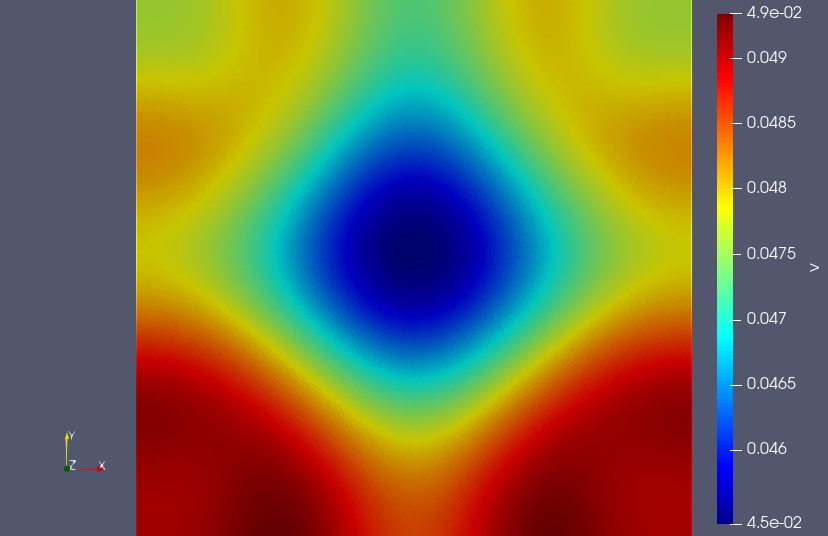} & 
\includegraphics[scale=0.1125]{./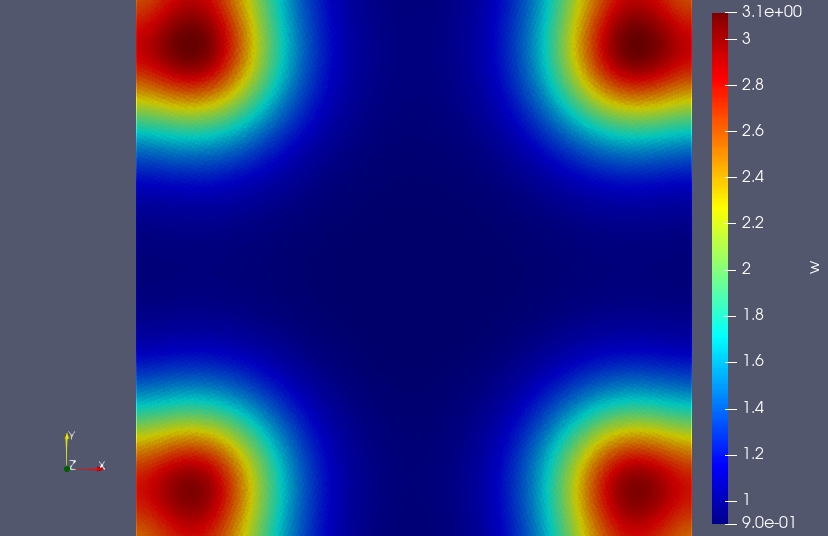} \\
($a_2$) $u,\quad t=4.0$ & ($b_2$) $v,\quad t=4.0$ & ($c_2$) $w,\quad t=4.0$ \\
\includegraphics[scale=0.1125]{./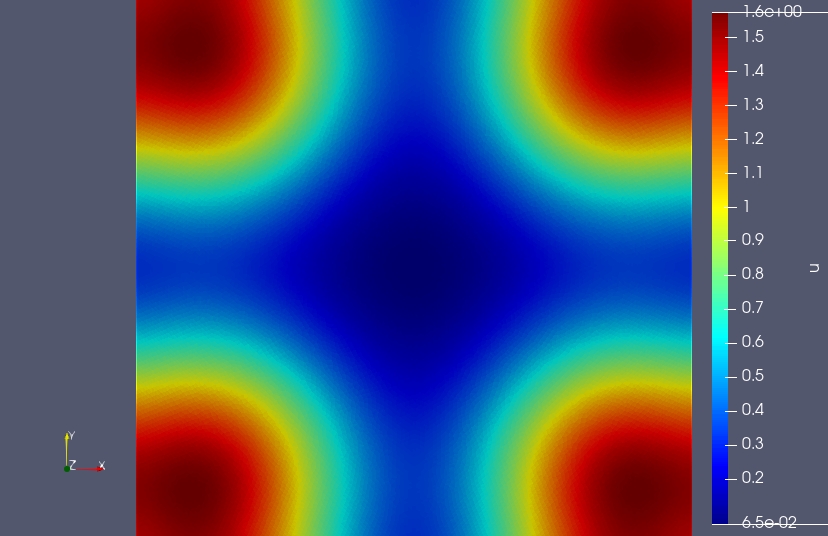} &
\includegraphics[scale=0.1125]{./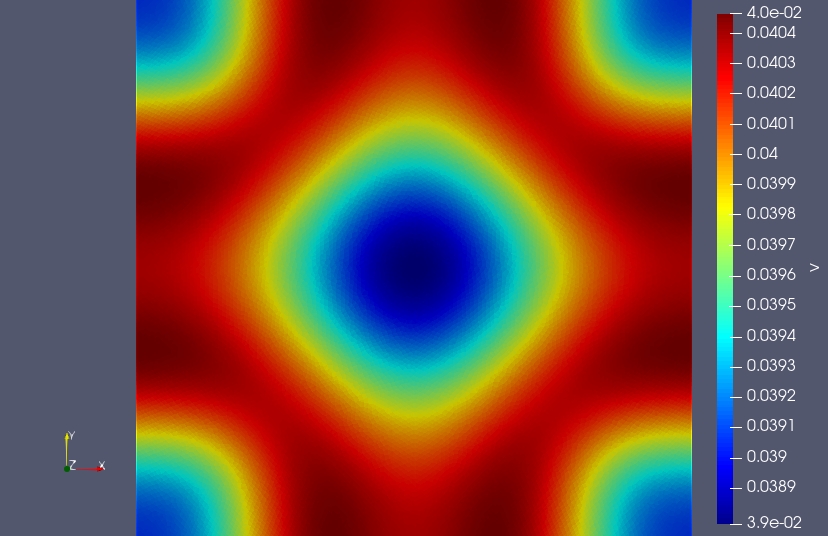} & 
\includegraphics[scale=0.1125]{./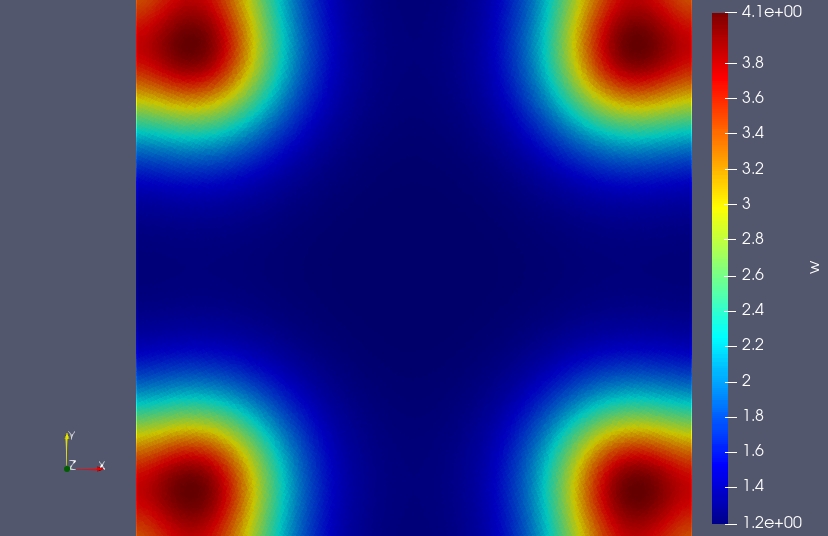} \\
($a_3$) $u,\quad t=20.0$ & ($b_3$) $v,\quad t=20.0$ & ($c_3$) $w,\quad t=20.0$ 
\end{tabular}
\caption{{\em Contour plots} of time evolution of the resource $u$,  mesopredador $v$ and top predador $w$ at different times. $ q=1.0$, $c=1.5$}  \label{Figu11}
\end{figure}


\begin{figure}[hbt]
\begin{tabular}{ccc}
\includegraphics[scale=0.1125]{./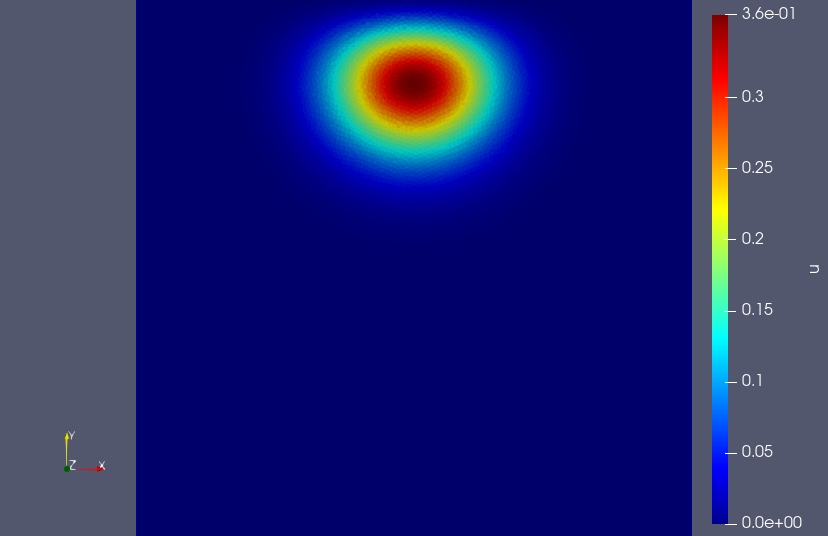} &
\includegraphics[scale=0.1125]{./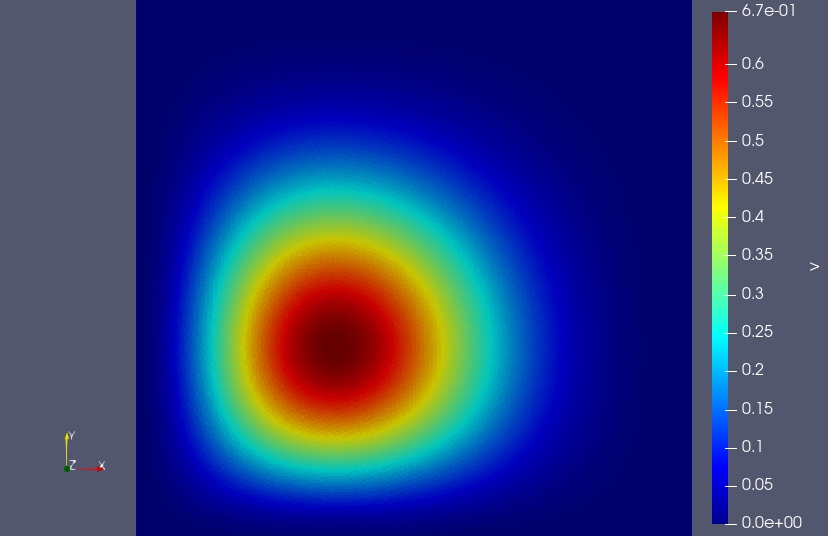} &
\includegraphics[scale=0.1125]{./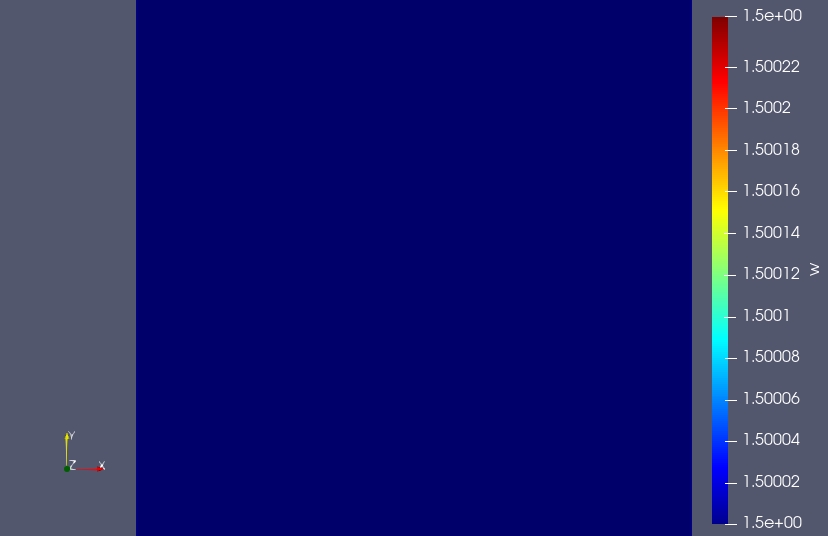} \\
($a_1$) $u,\quad t=0$ & ($b_1$) $v,\quad t=0$ & ($c_1$) $w,\quad t=0$  \\
\includegraphics[scale=0.1125]{./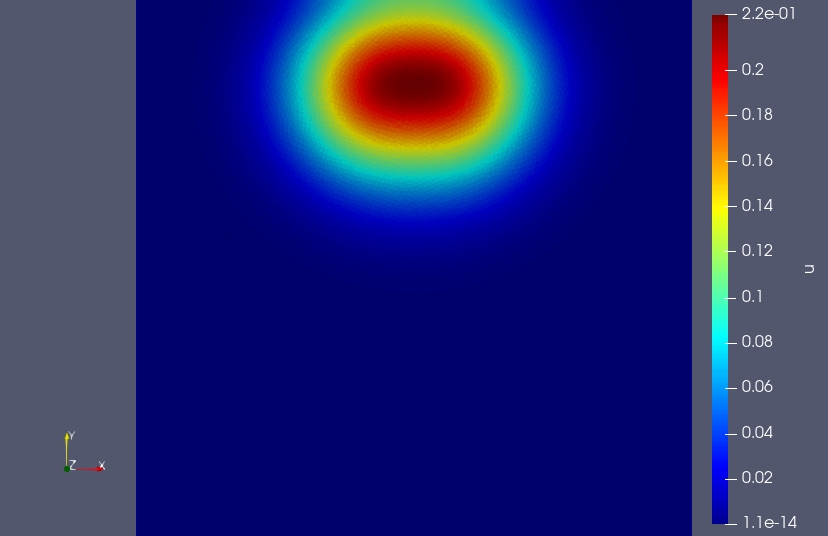} &
\includegraphics[scale=0.1125]{./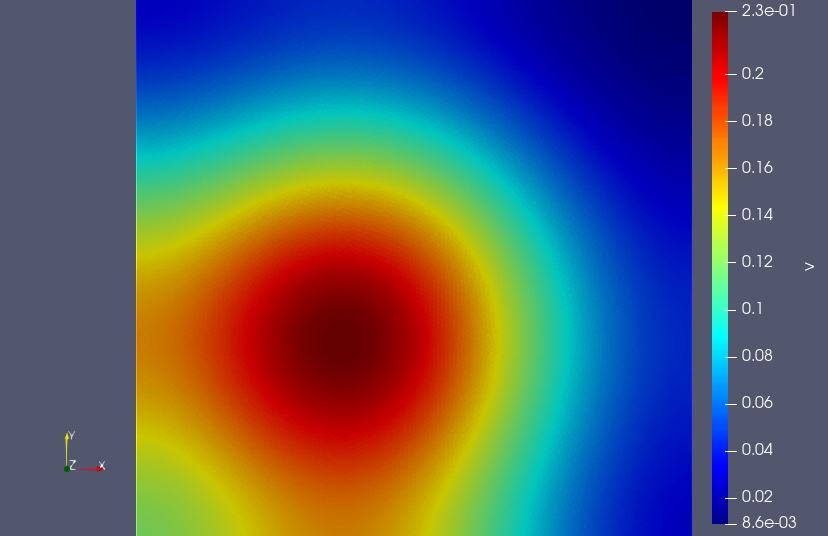} &
\includegraphics[scale=0.1125]{./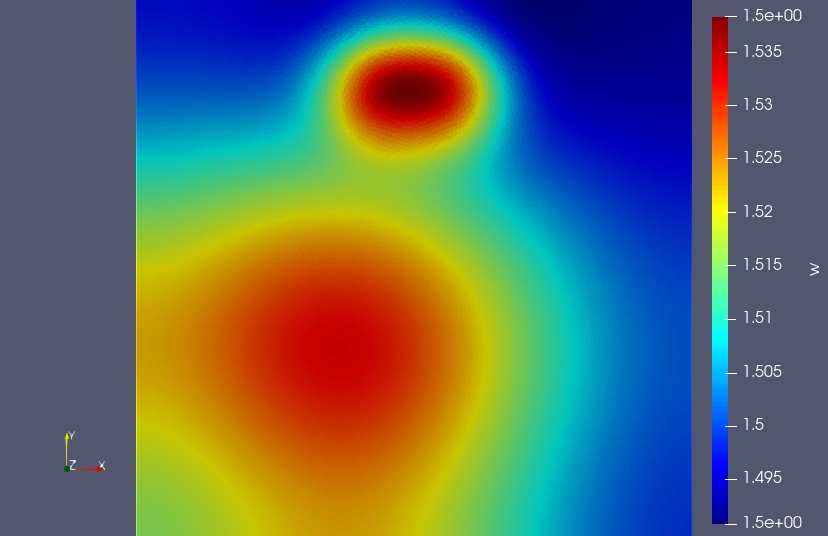} \\
($a_2$) $u,\quad t=0.1$ & ($b_2$) $v,\quad t=0.1$ & ($c_2$) $w,\quad t=0.1$ \\
\includegraphics[scale=0.1125]{./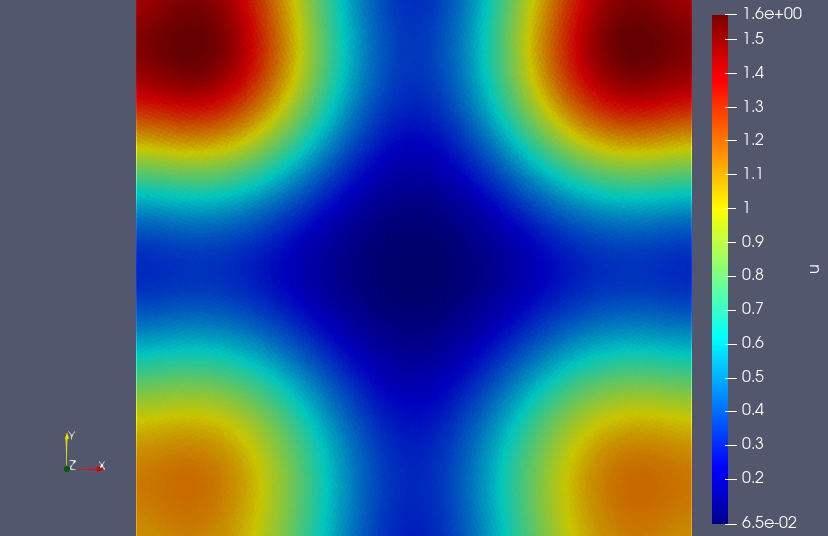} &
\includegraphics[scale=0.1125]{./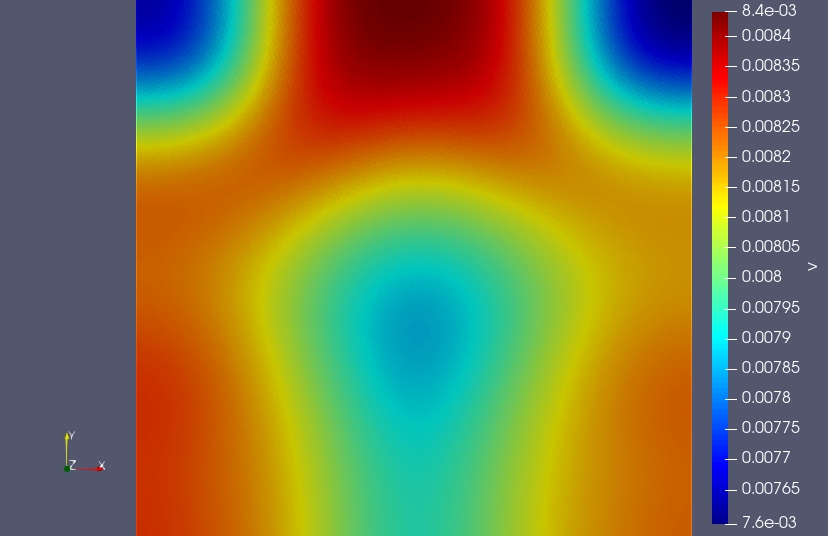} & 
\includegraphics[scale=0.1125]{./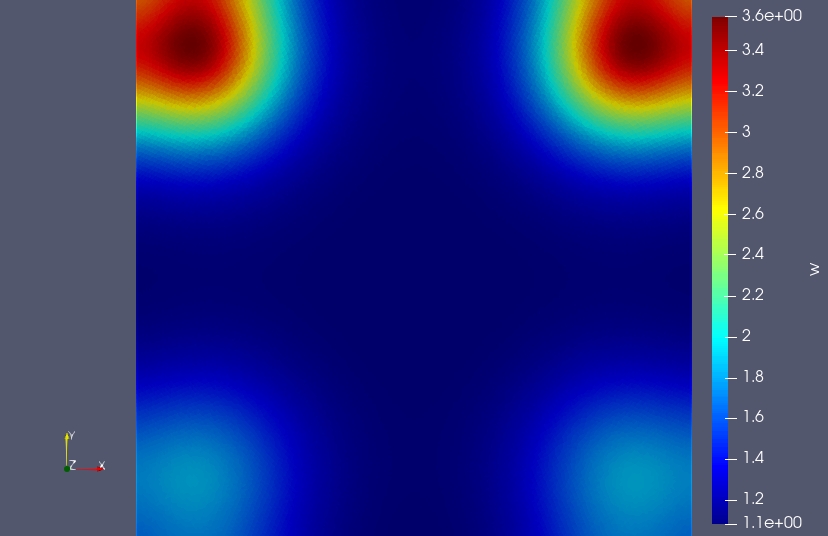} \\
($a_3$) $u,\quad t=2.0$ & ($b_3$) $v,\quad t=2.0$ & ($c_3$) $w,\quad t=2.0$ \\
\includegraphics[scale=0.1125]{./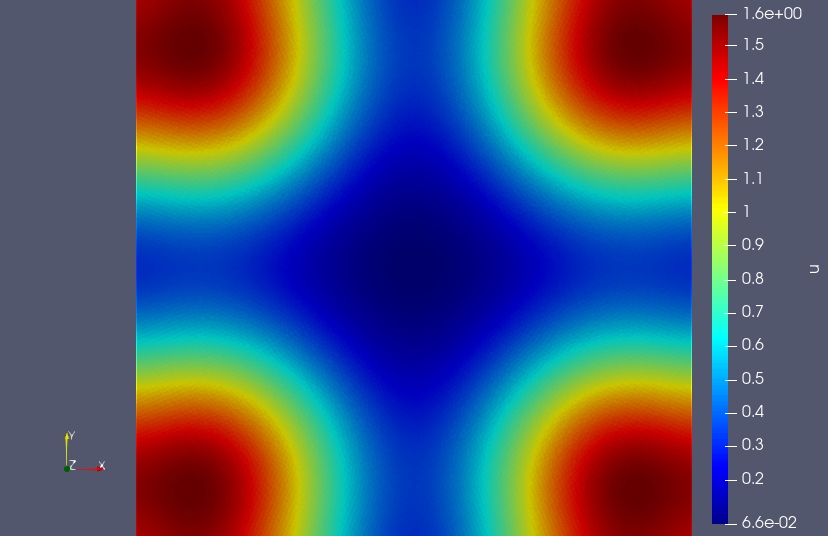} &
\includegraphics[scale=0.1125]{./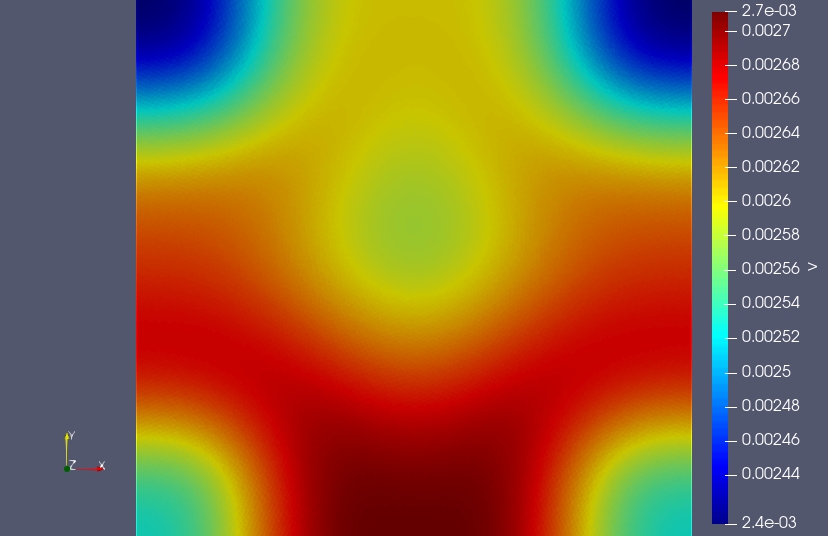} & 
\includegraphics[scale=0.1125]{./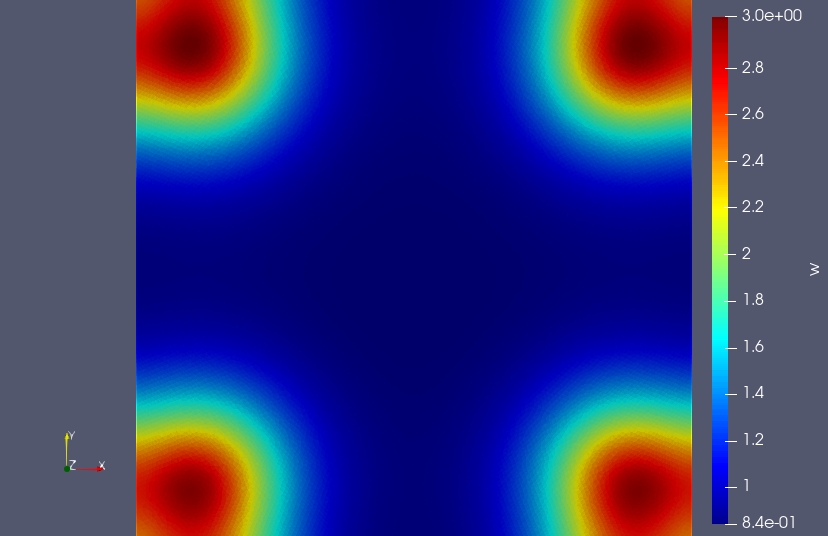} \\
($a_4$) $u,\quad t=4.0$ & ($b_4$) $v,\quad t=4.0$ & ($c_4$) $w,\quad t=4.0$ \\
\includegraphics[scale=0.1125]{./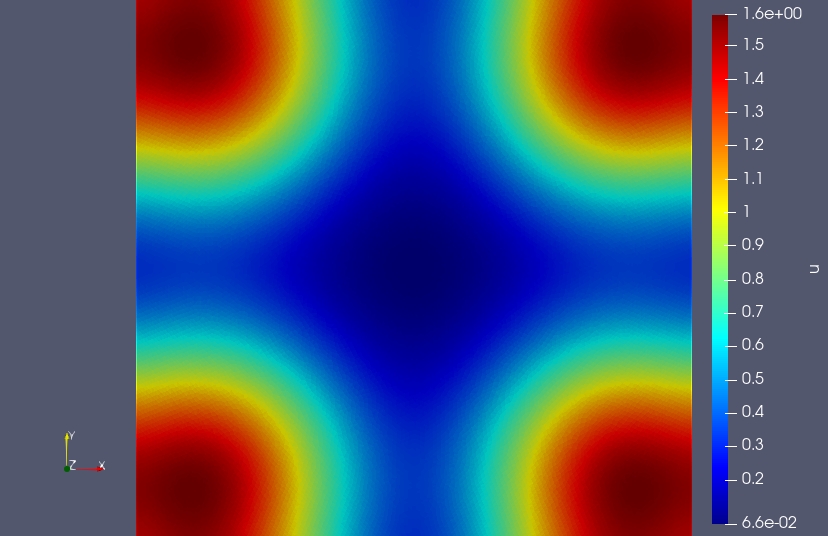} &
\includegraphics[scale=0.1125]{./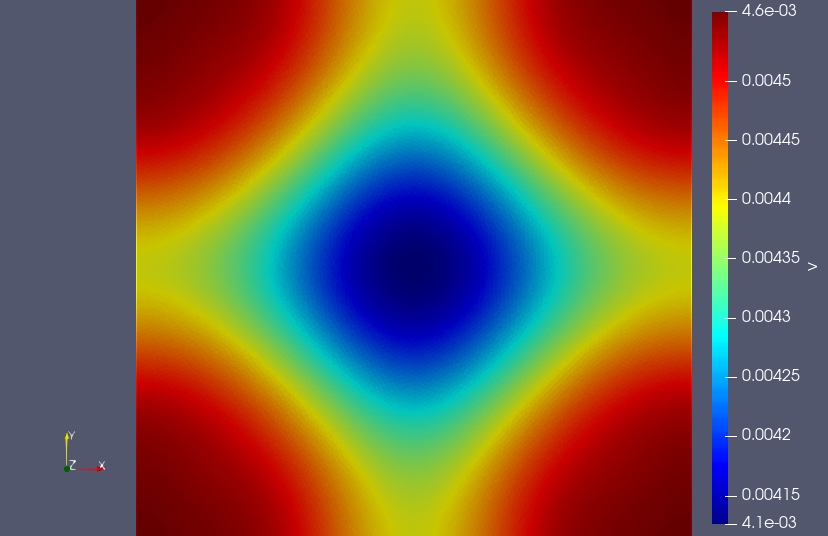} & 
\includegraphics[scale=0.1125]{./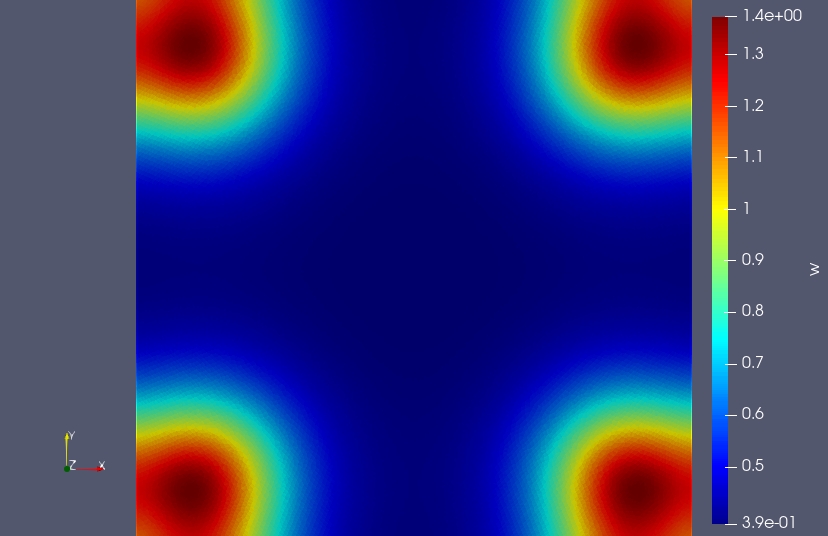} \\
($a_5$) $u,\quad t=20.0$ & ($b_5$) $v,\quad t=20.0$ & ($c_5$) $w,\quad t=20.0$ 
\end{tabular}
\caption{{\em Contour plots} of time evolution of the resource $u$,  mesopredador $v$ and top predador $w$ at different times. $ q=1.0$, $c=2.5$}  \label{Figu12}
\end{figure}

%

\begin{figure}[hbt]
\begin{tabular}{ccc}
\includegraphics[scale=0.1125]{./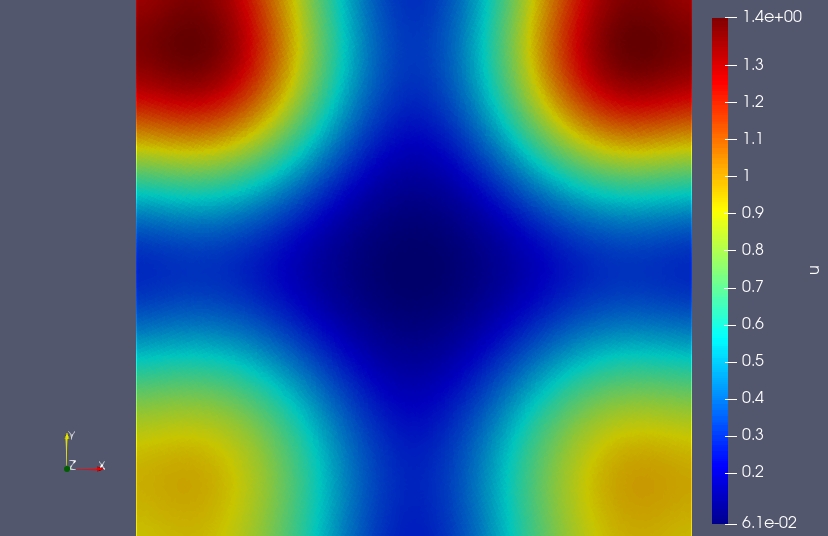} &
\includegraphics[scale=0.1125]{./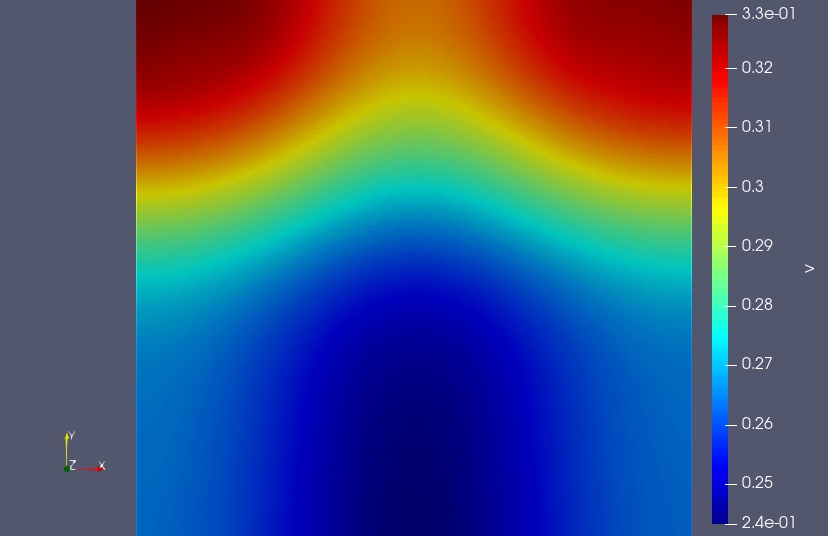} & 
\includegraphics[scale=0.1125]{./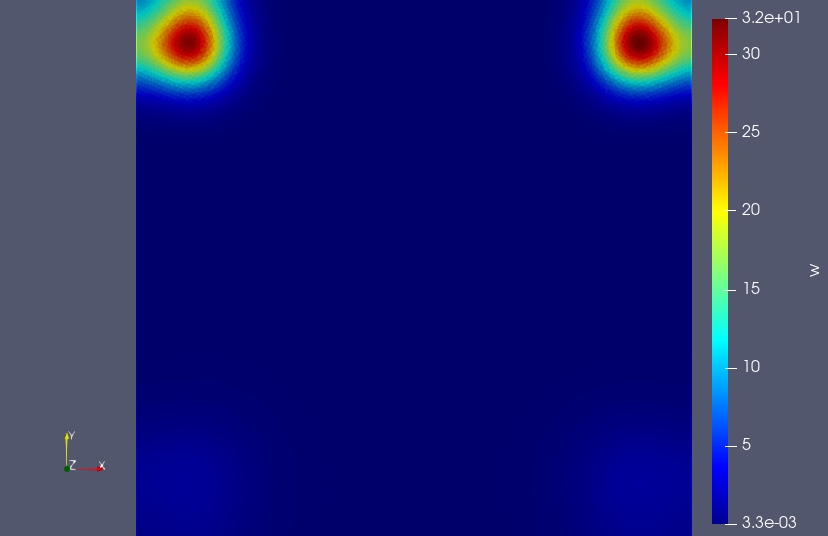} \\
($a_1$) $u,\quad t=2.0$ & ($b_1$) $v,\quad t=2.0$ & ($c_1$) $w,\quad t=2.0$ \\
\includegraphics[scale=0.1125]{./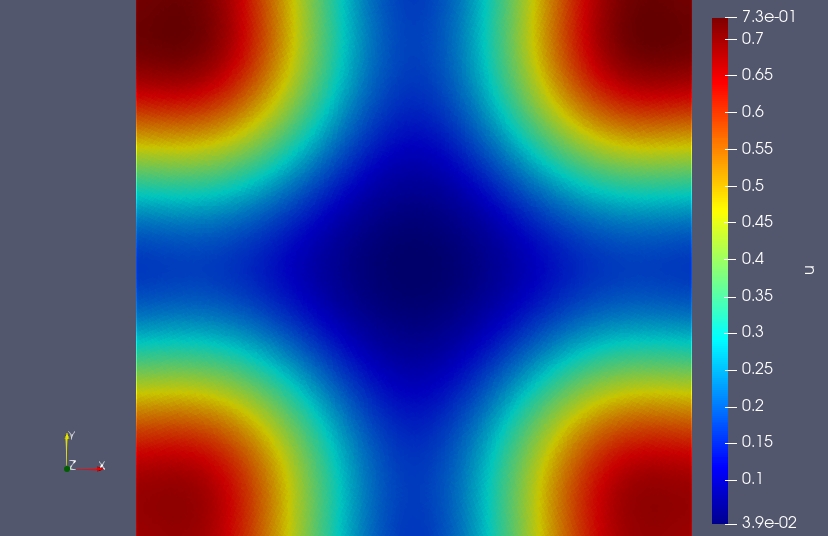} &
\includegraphics[scale=0.1125]{./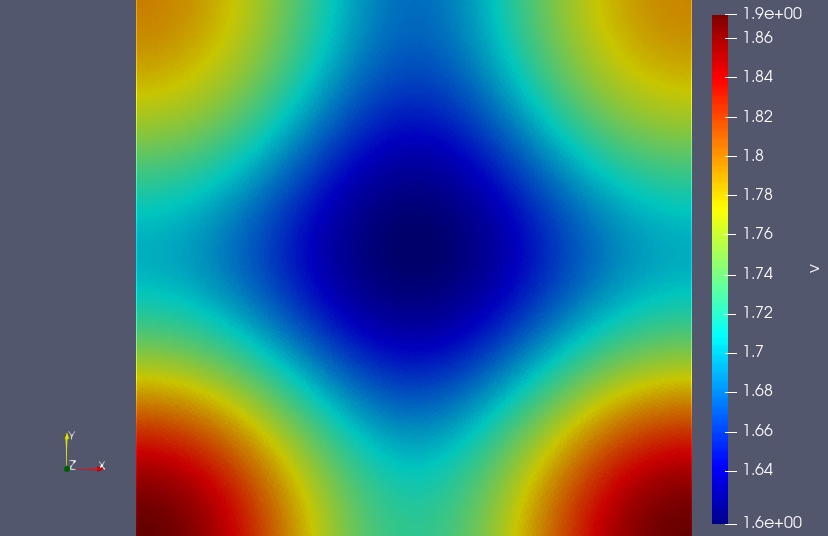} & 
\includegraphics[scale=0.1125]{./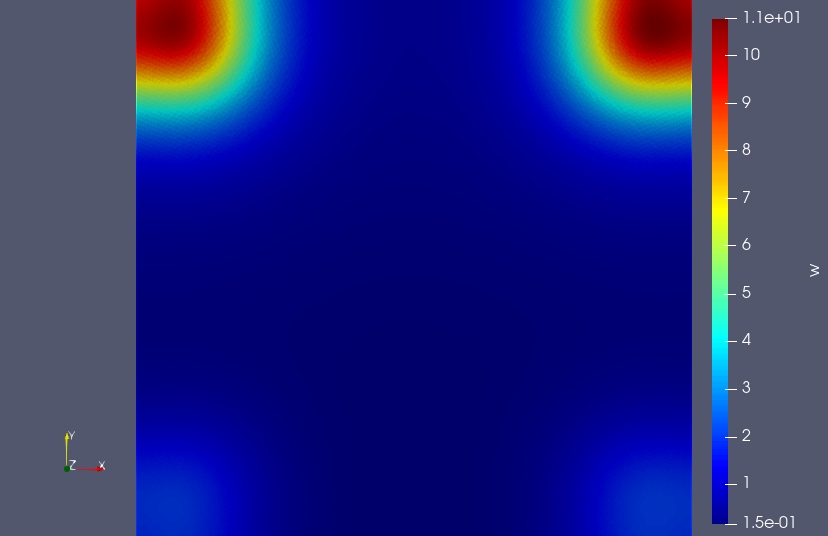} \\
($a_2$) $u,\quad t=4.0$ & ($b_2$) $v,\quad t=4.0$ & ($c_2$) $w,\quad t=4.0$ \\
\includegraphics[scale=0.1125]{./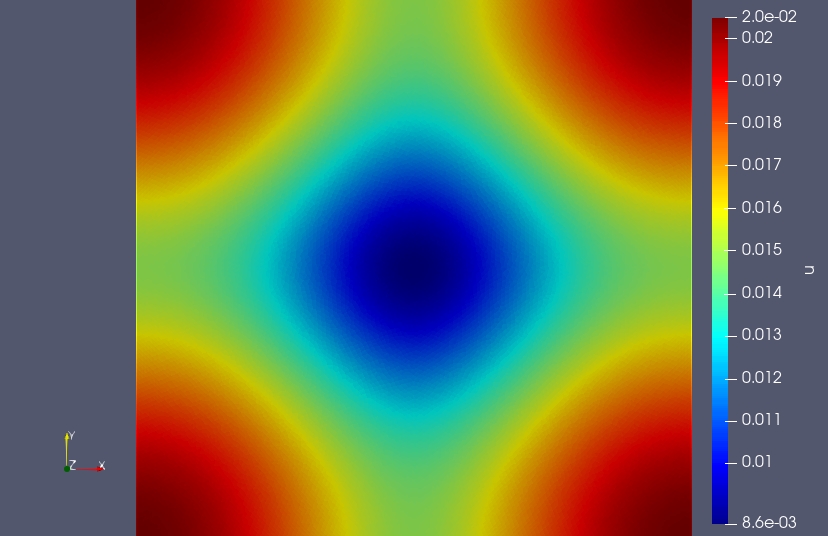} &
\includegraphics[scale=0.1125]{./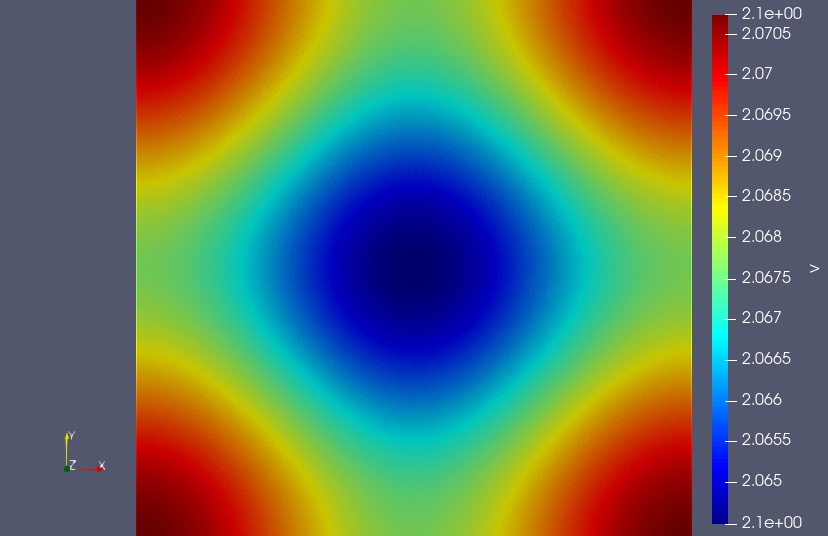} & 
\includegraphics[scale=0.1125]{./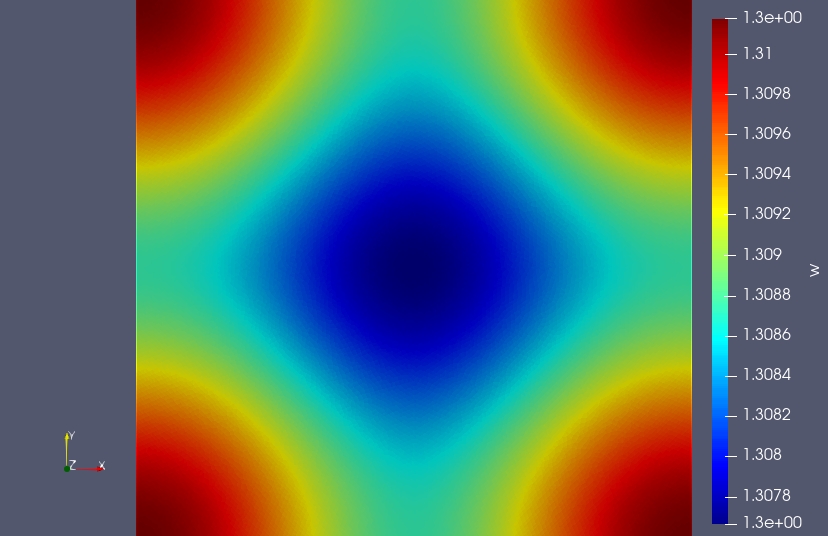} \\
($a_3$) $u,\quad t=20.0$ & ($b_3$) $v,\quad t=20.0$ & ($c_3$) $w,\quad t=20.0$ 
\end{tabular}
\caption{{\em Contour plots} of time evolution of the resource $u$,  mesopredador $v$ and top predador $w$ at different times. $ q=10.0$, $c=.1$}  \label{Figu13}
\end{figure}

\begin{figure}[hbt]
\begin{tabular}{ccc}
\includegraphics[scale=0.1125]{./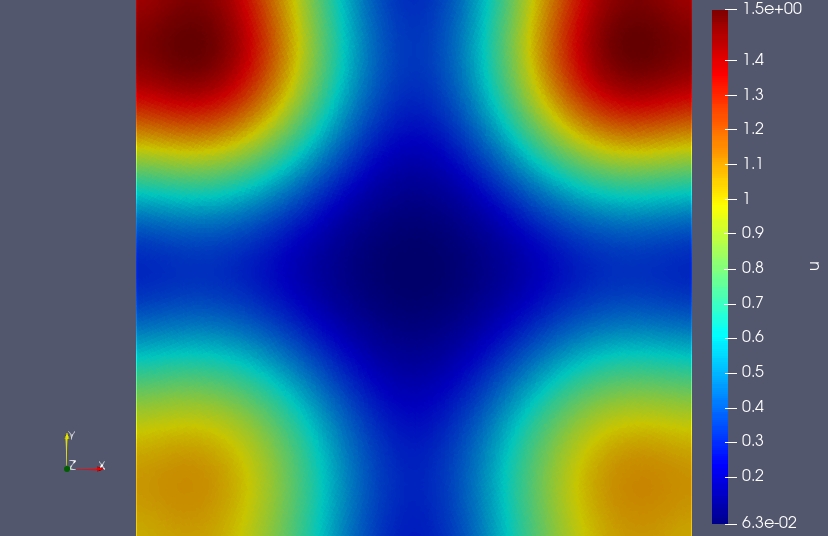} &
\includegraphics[scale=0.1125]{./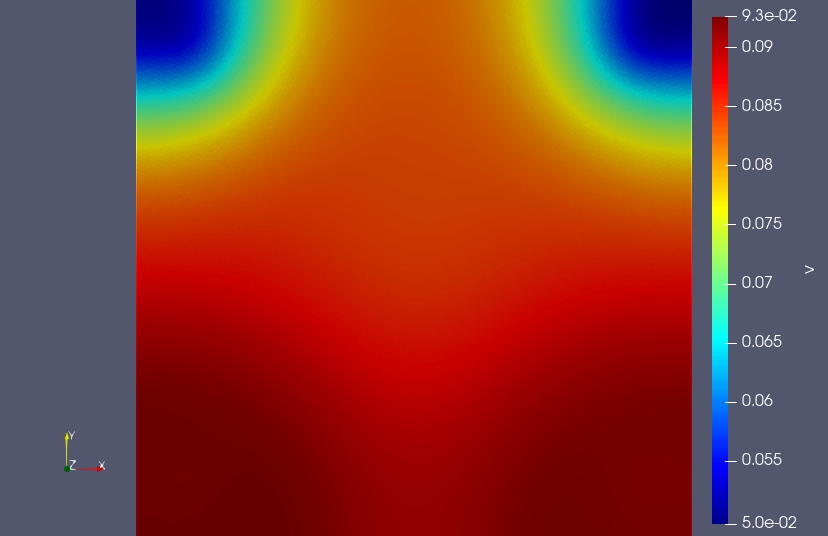} & 
\includegraphics[scale=0.1125]{./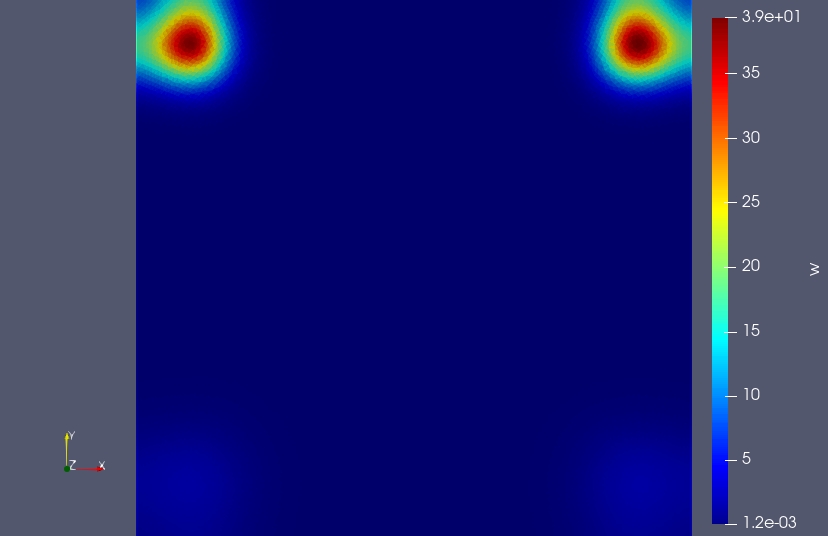} \\
($a_1$) $u,\quad t=2.0$ & ($b_1$) $v,\quad t=2.0$ & ($c_1$) $w,\quad t=2.0$ \\
\includegraphics[scale=0.1125]{./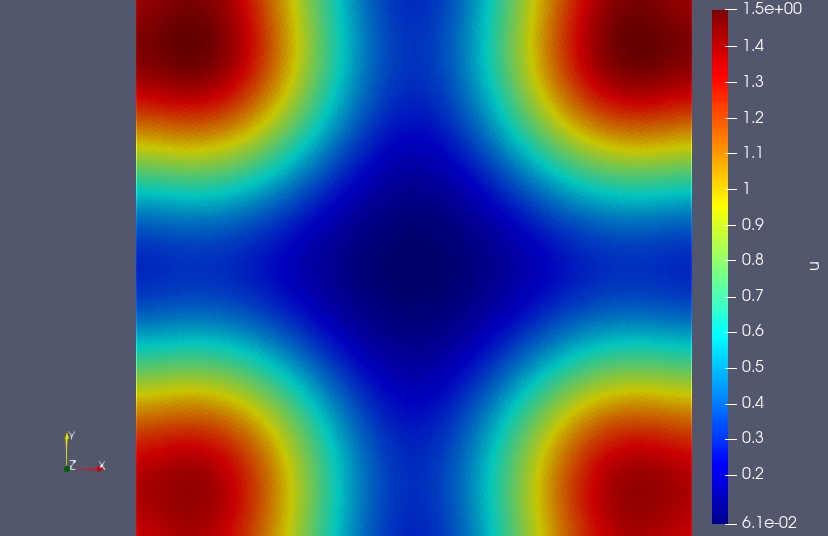} &
\includegraphics[scale=0.1125]{./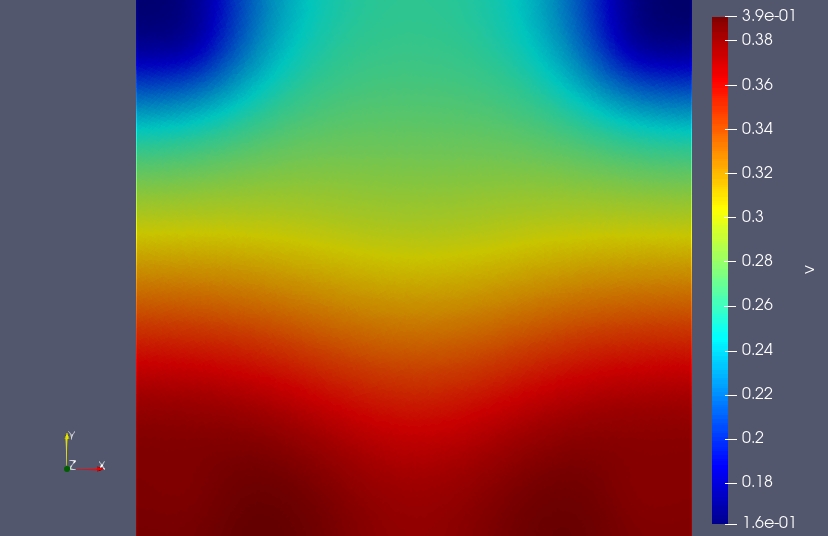} & 
\includegraphics[scale=0.1125]{./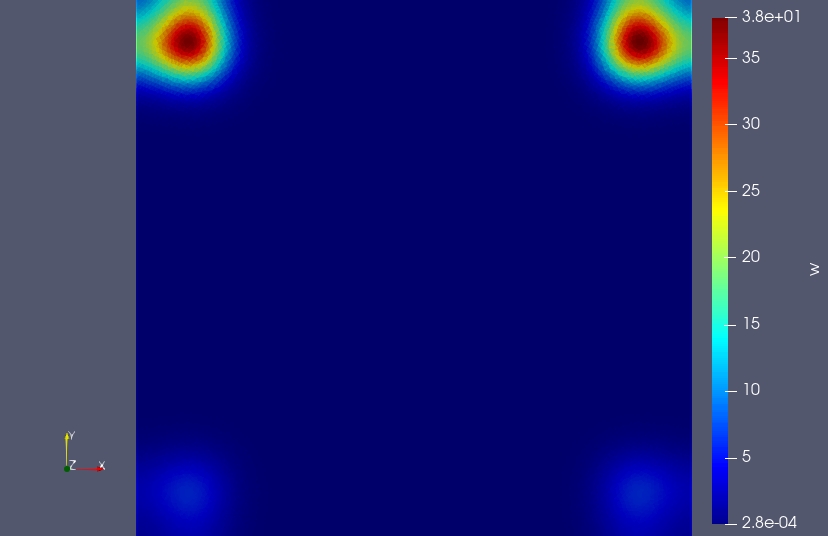} \\
($a_2$) $u,\quad t=4.0$ & ($b_2$) $v,\quad t=4.0$ & ($c_2$) $w,\quad t=4.0$ \\
\includegraphics[scale=0.1125]{./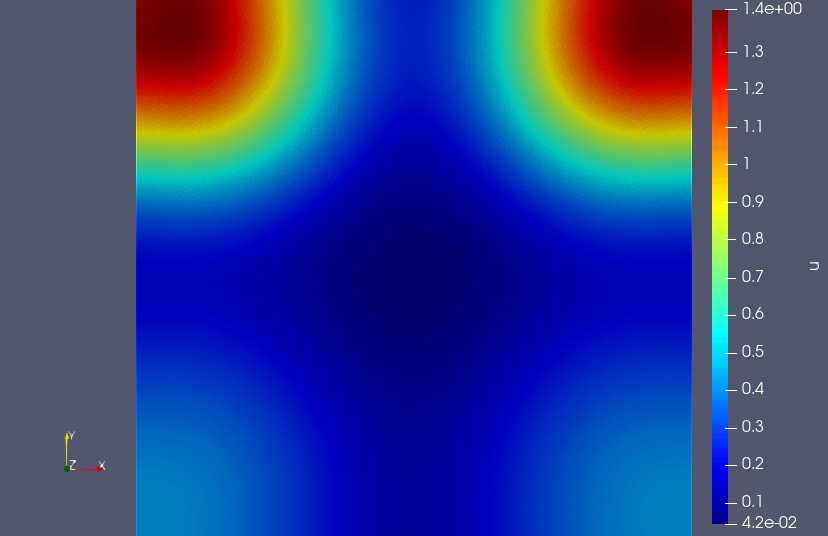} &
\includegraphics[scale=0.1125]{./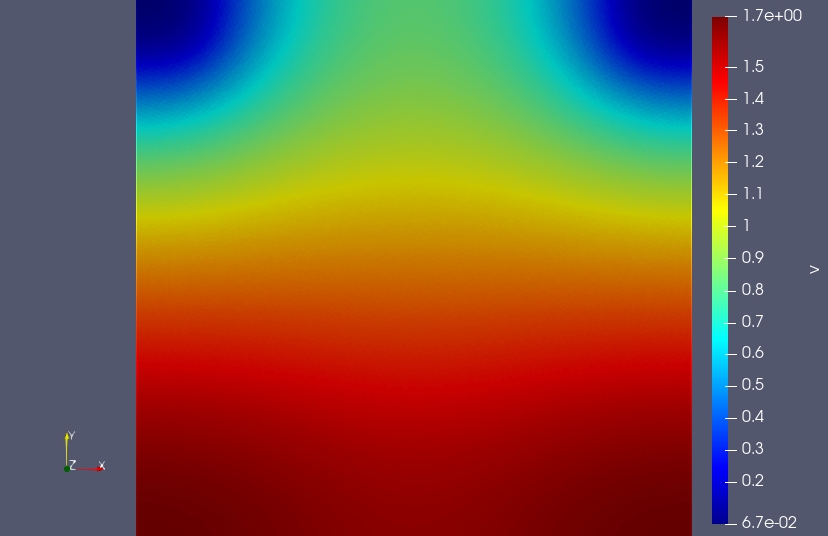} & 
\includegraphics[scale=0.1125]{./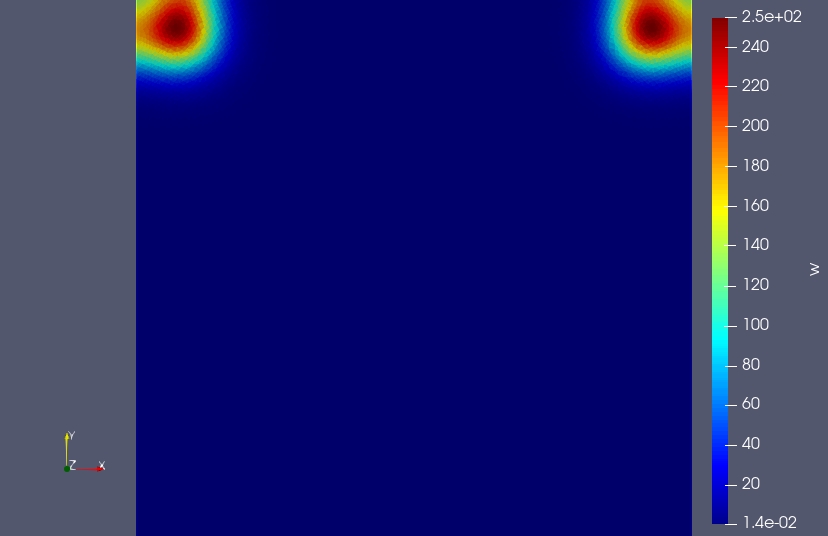} \\
($a_3$) $u,\quad t=20.0$ & ($b_3$) $v,\quad t=20.0$ & ($c_3$) $w,\quad t=20.0$ 
\end{tabular}
\caption{{\em Contour plots} of time evolution of the resource $u$,  mesopredador $v$ and top predador $w$ at different times. $ q=10.0$, $c=1.0$} \label{Figu14}
\end{figure}

\begin{figure}[hbt]
\begin{tabular}{ccc}\
\includegraphics[scale=0.1125]{./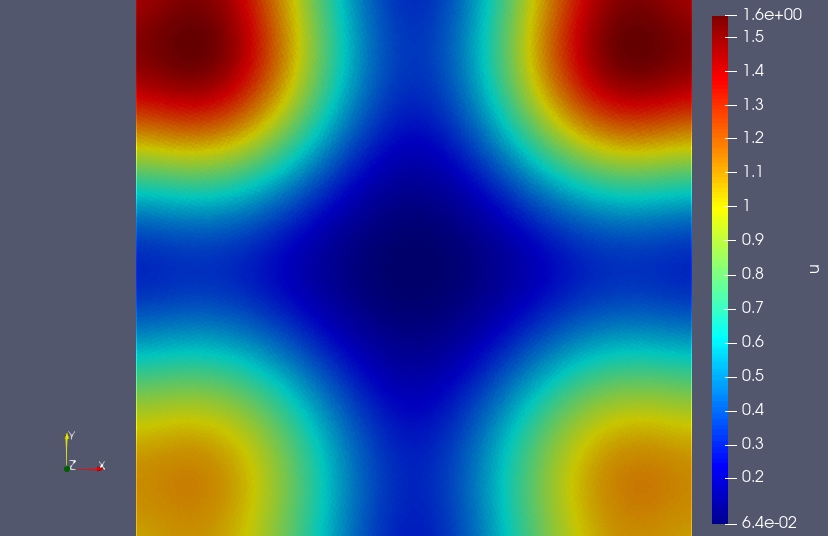} &
\includegraphics[scale=0.1125]{./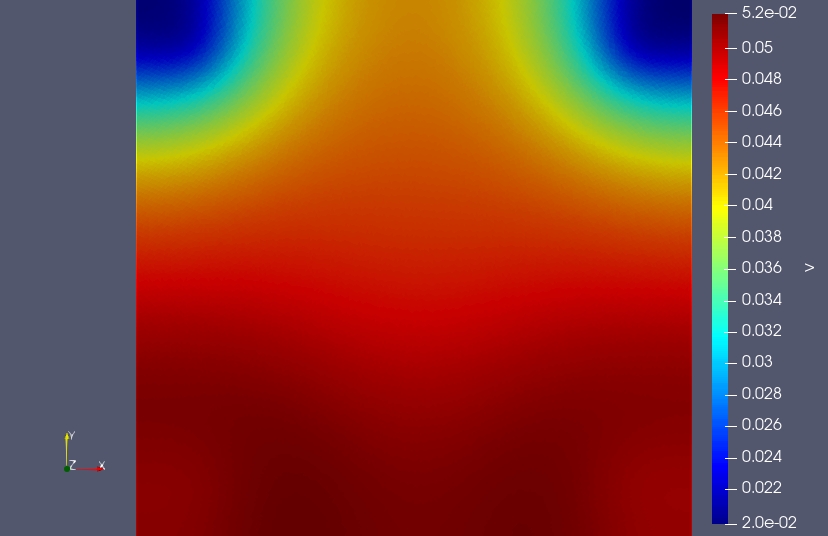} & 
\includegraphics[scale=0.1125]{./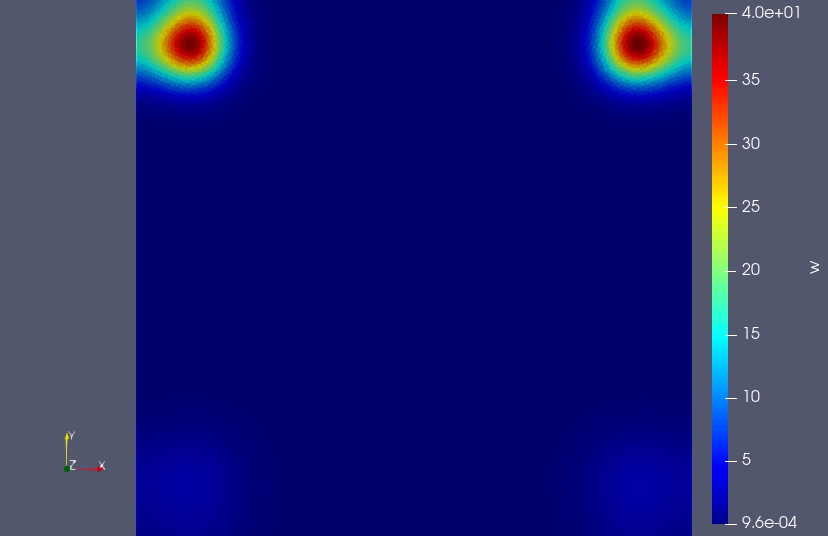} \\
($a_1$) $u,\quad t=2.0$ & ($b_1$) $v,\quad t=2.0$ & ($c_1$) $w,\quad t=2.0$ \\
\includegraphics[scale=0.1125]{./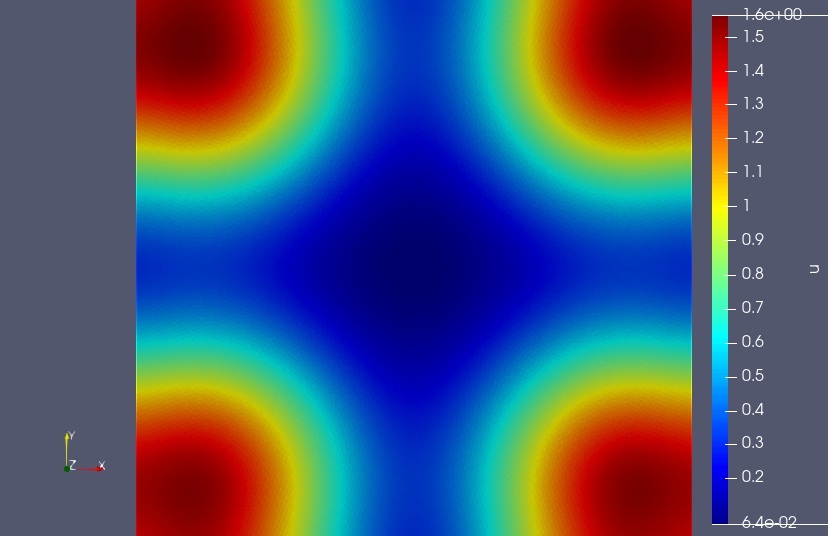} &
\includegraphics[scale=0.1125]{./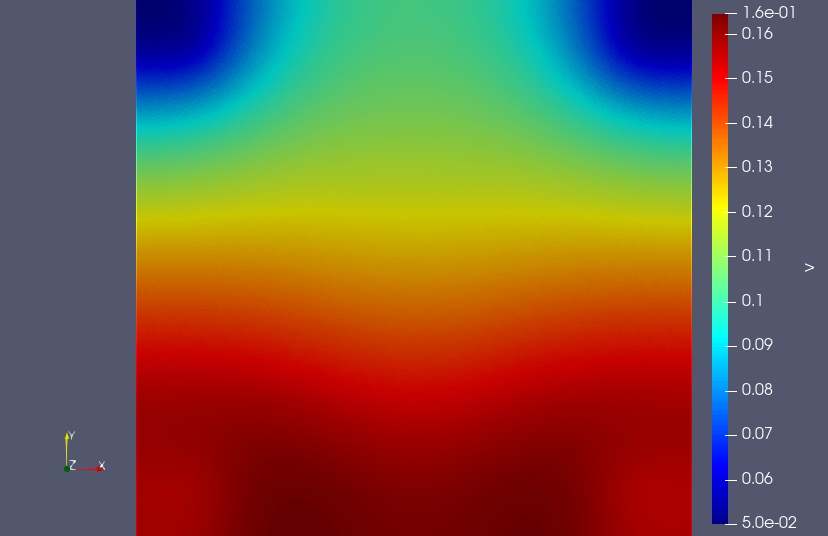} & 
\includegraphics[scale=0.1125]{./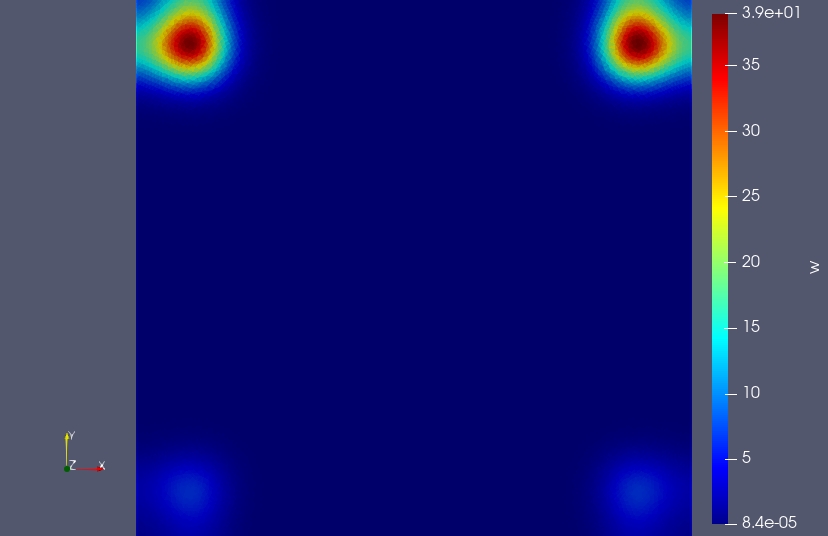} \\
($a_2$) $u,\quad t=4.0$ & ($b_2$) $v,\quad t=4.0$ & ($c_2$) $w,\quad t=4.0$ \\
\includegraphics[scale=0.1125]{./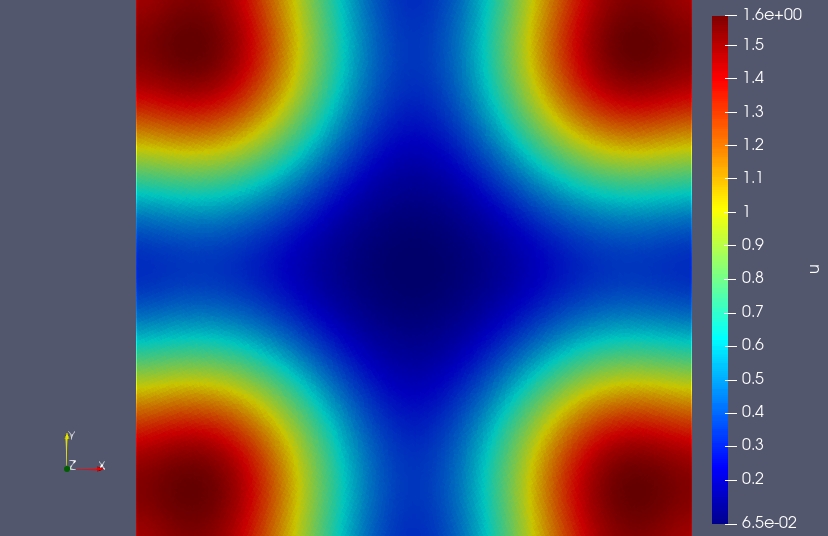} &
\includegraphics[scale=0.1125]{./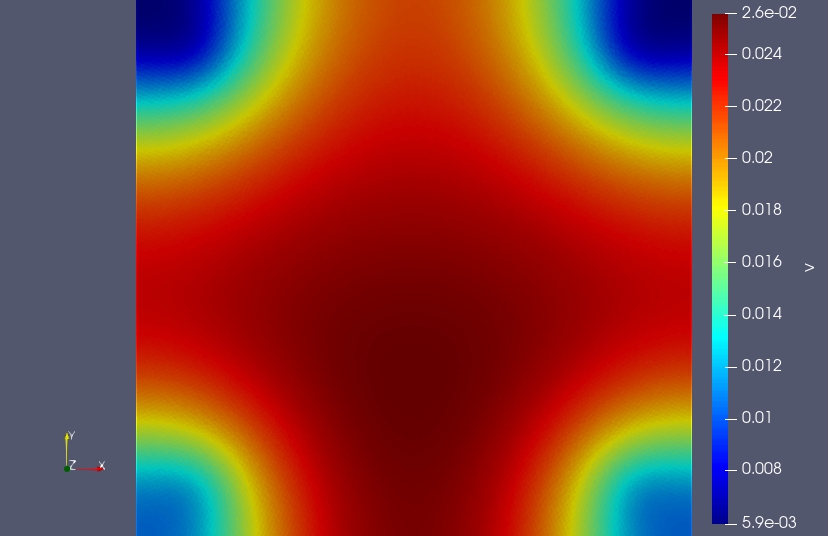} & 
\includegraphics[scale=0.1125]{./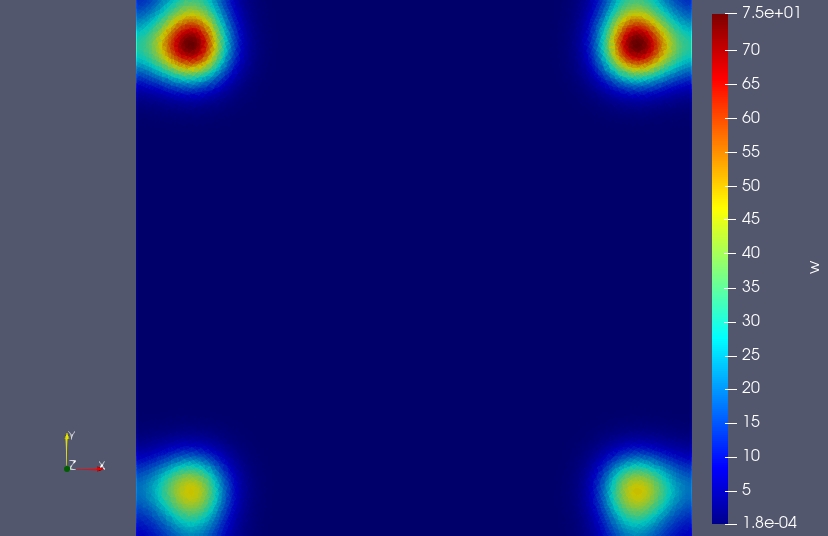} \\
($a_3$) $u,\quad t=20.0$ & ($b_3$) $v,\quad t=20.0$ & ($c_3$) $w,\quad t=20.0$ 
\end{tabular}
\caption{{\em Contour plots} of time evolution of the resource $u$,  mesopredador $v$ and top predador $w$ at different times. $ q=10.0$, $c=1.5$} \label{Figu15}
\end{figure}

\clearpage
It is worth to note that an increment of the predation rate $c$ not necessarily induces an increment on the predator population. In Figure \ref{Figu10} the predation rate is c=1.5, and  the predator population is lesser than the population showed in Figure \ref{Figu9} where the predaton rate is $c=1.0$. This is due, in part,  to the weak attraction of the resource on the individual  predators, as this allows  predators to remain randomly dispersed throughout space preventing the mesopredator population from reaching  a  level  high enough to support a large  population of predators. On the other hand, By comparing Figure \ref{Figu10} with Figure \ref{Figu11} we observe that the main effect on the increment of the attraction paremeter $q$ is on the spatial distribution of meso and top predators. For $q=1.0$ (Figure \ref{Figu11}), top predators tend to occupy the places most densely populated by the resource; in contrast, mesopredators occupy the places least densely populated by top predators. However, if the predation rate is large enough,  the mesopredator population is depleted and spatial complementarity is lost (see Figure \ref{Figu12}). This effect vanishes if he resource's attraction to top predators grows; in fact, for $q=10$, the separation of top and mesopredators habitats  is strengthened  for $c=1$ and all three species reach relatively large populations levels compared to $c=.1$ (See Figure (\ref{Figu13}-\ref{Figu14}). The coexistence of the three species requires a proper balance between the rate of predation and the attraction of predators to the resource population. In Figure \ref{Figu15}, we observe very low mesopredator population levels and a sharp concentration of top predators around the areas most populated by mesopredators.

\section{Conclusions}
With the aim to analyze the rol of migration and defensive mechanisms of the prey, in this work two variations of a  tritrophic model have been considered. According to Table (\ref{tb1}), if the three species remain in the same location (without migration), top predator would become  extinct since only the equliibrium point $P_2$ is stable.  In the first case, where a top predator is an active-search hunter it is assumed that as prey density  increases, searching intensity  decreases (Model (\ref{mod1f}) with $\chi_1(v,w)=e_1w-e_2 v$). Numerical  simulations show  that all three species coexist and both  resource and prey tend to be concentrated around sites $(x^{*},y^{*})\in \Omega$ where resource suitability is greatest; that is, sites $(x^{*},y^{*})$ where  $K(x^{*},y^{*})$ is the maximum. The spatial distribution of predator depends on the defensive capacity of the prey; for $e_2/e_1$ low enough, predators and prey have a similar distribution (see Figures (\ref{Figu3b}), (\ref{ff4})). However, if $e_2/e_1$ reaches a  large enough level, the resource and prey populations share the same space, but the predator occupies the locations less populated by prey(see Figures (\ref{ff2}), (\ref{fig12/3}), (\ref{fig5})).  
One second point of interest in this work is  how the attraction of enemies of my enemies influences the dynamics of a community. We analyzed this question with Model (\ref{mod2f}) where the predator moves toward the resource gradient  according to the sensivity function $\chi_2(u,w)=q u w$; that is, the higher  the population density  of the resource or the predator, the greater the tendency of the predator to move towards the resource. In some cases, the attraction activity is caused by volatiles emitted by the resource organisms. The numerical simulations of  Model (\ref{mod2f}) have focused to get some insight about the impact of the attraction that the resource exerted on the predator on the dynamics of the mesopredator-predator interaction. We observe that if the attraction is low enough, the dynamics is mainly determined  by the intensity of predation on the mesopredator population, and both mesopredators and predators tend to occupy  the sites  most populated  by the resource. The spatial distribution of the three especies shown in   Figure \ref{Figu9} ( $q=0.1, c=1.0$) is very similar to that shown in  Figure \ref{Figu10} ( $q=0.1, c=1.5$). Notice that the greatest population density of predators and mesopredators are closer to region where the resource is most abundant.   However, when the attraction of predators towards the resource increases to a reach a relatively large level,  predators follow th spatial distribution of the reosurce and mesopredators ocuppy zones where predators are scarce. This is shown in Figure \ref{Figu10} , Figure \ref{Figu11}  and Figure \ref{Figu15}
Hence, our numerical simulations provide evidence that migration favors coexistence and behavioral characteristics, such  as a defense mechanism or hunter strategies, can impact the spatial distribution of species. Furthermore, according with the simulations of our two models, we find that the distribution of prey follows a pattern similar to that of the resource, which tends to be distributed near the places of greatest suitability. The cost of a defense mechanism has been considered in \cite{xia} where the authors analyze how this cost impact on pattern distribution of predators and preys.  The role of predators on the spatial distribution has been studied from a experimental point of view in \cite{liv}, where preys do not present a defense against predators. They found that was not the patch type but the distribution of predators  that most strongly predicted the composition of the prey community.  The effect of diffussion on the spatial distribution has beeen analyzed in \cite{nit}.


\section*{Appendix A} \label{Ap A}

System (\ref{mod3f}) has the following equilibrium points

$i)$ $P_{1}\left( 0,0,0\right) $

$ii)$ $P_{2}\left( K,0,0\right) $

$iii)$ $P_{3}\left( \frac{a\mu }{b\gamma -\mu },\frac{a\alpha \gamma \left(
b\gamma K-\mu (a + K)\right) }{K\left( b\gamma -\mu \right) ^{2}},0\right) .$\\
Under appropriate conditions, this system posses two equilibrium points $P_{4}\left( u_{1},v_{1},w_{1}\right) $ and $P_{5}\left(
u_{2},v_{2},w_{2}\right) $ with positive coordinates given by

$u_{1}=\frac{1}{2}\left( -a+K-\sqrt{\frac{c\alpha \beta \left( a+K\right)
^{2}-\left( 4bdK+\left( a+K\right) ^{2}\alpha \right) \nu }{\left( c\beta
-\nu \right) \alpha }}\right) $

$v_{1}=\frac{d\nu }{c\beta -\nu }$

$w_{1}=\frac{\left( d+v_{1}\right) \left( b\gamma u_{1}-\left(
a+u_{1}\right) v_{1}\mu \right) }{c\left( a+u_{1}\right) }$

$u_{2}=\frac{1}{2}\left( -a+K+\sqrt{\frac{c\alpha \beta \left( a+K\right)
^{2}-\left( 4bdK+\left( a+K\right) ^{2}\alpha \right) \nu }{\left( c\beta
-\nu \right) \alpha }}\right) $

$v_{2}=\frac{d\nu }{c\beta -\nu }$

$w_{2}=\frac{\left( d+v_{2}\right) \left( b\gamma u_{2}-\left(
a+u_{2}\right) v_{2}\mu \right) }{c\left( a+u_{2}\right) }.$

\noindent Point $P_{1}$ is always unstable;  $P_{2}$ is locally asymptotically stable if $bK\gamma -a\mu -K\mu <0$ and unstable if $bK\gamma -a\mu
-K\mu >0$; $P_{3}$ is stable if $bK\gamma -a\mu -K\mu >0$ and $b\gamma
a>bK\gamma -a\mu -K\mu $ and unstable if $bK\gamma -a\mu -K\mu >0$ and $b\gamma
a<bK\gamma -a\mu -K\mu $.

If $bK\gamma -a\mu -K\mu <0$, point $(K,0,0)$ is a stable equilibrium point of system \ref{mod3f} (see appendix XX). In the following theorem, we prove that  stability of this point is also preserved in the system \ref{mod1f}.
Let ~$0=\mu _{1}<\mu _{2}<\mu _{3}...$ be the eigenvalues of the operator $%
-\Delta $ on $\Omega $ with Neumann boundary conditions and let 
$E\left( \mu _{i}\right) $ be the eigenspace corresponding to $\mu _{i}$ in 
$C^{1}\left( \overline{\Omega }\right) .$

\medskip
Received xxxx 20xx; revised xxxx 20xx; early access xxxx 20xx.
\medskip

\end{document}